\documentclass[12pt]{article}
\setlength{\textheight}{620pt}
\setlength{\textwidth}{440pt}

\usepackage{amscd}
\usepackage{latexsym,amsmath,amssymb,amsbsy, amsthm}
\usepackage[square]{natbib}

\usepackage{subcaption}

\usepackage{hyperref}
\usepackage{lscape,fancyhdr,fancybox}
\usepackage{graphicx,epsfig, xy}
\usepackage{color}
\usepackage[hmarginratio=1:2, vmarginratio =5:5,
textheight=22cm,bindingoffset=1.5cm, textwidth=14.6cm]{geometry}
\usepackage{tikz}




\newcommand{\comment}[1]{}

\numberwithin{equation}{section}

\newtheorem{remark}{Remark}[section]
\newtheorem{theorem}{Theorem}[section]

\newtheorem{lemma}{Lemma}[section]

\theoremstyle{definition}
\newtheorem{definition}{Definition}[section]

\newtheorem{result}{Result}[section]

\usepackage{amsfonts}

\usepackage{txfonts}
\usepackage{graphics}

\DeclareMathOperator{\Tr}{Tr}

\renewcommand{\1}{\bf 1}

\newcommand{\beq}{\begin{eqnarray}}
\newcommand{\eeq}{\end{eqnarray}}
\newcommand{\ben}{\begin{eqnarray*}}
\newcommand{\een}{\end{eqnarray*}}

\allowdisplaybreaks

\begin{document}
 \def\shortauthors{A. Bose, K. Saha, P. Sen}
\title{\textbf{\Large \sc
\Large{Some patterned matrices with independent entries} 
}}\small

 \author{
 \parbox[t]{0.20
\textwidth}{{\sc Arup Bose}
 \thanks{Statistics and Mathematics Unit, Indian Statistical Institute, 203 B.T. Road, Kolkata 700108, India. email: bosearu@gmail.com. 
Research  supported by J.C. Bose National Fellowship, Department of Science and Technology, Govt. of India.}}
\parbox[t]{0.25\textwidth}{{\sc Koushik Saha}
\thanks{Department of Mathematics, Indian Institute of Technology Bombay, Mumbai,  India. email: koushik.saha@iitb.ac.in.
Research  supported by MATRICS Grant of Science \& Engineering Research Board, Department of Science and Technology, Govt. of India.}}
 \parbox[t]{0.25\textwidth}{{\sc Priyanka Sen}
 \thanks{Statistics and Mathematics  Unit, Indian Statistical Institute, 203 B.T. Road, Kolkata 700108, INDIA. email: priyankasen0702@gmail.com}}
}

\date{\today}  
\maketitle
\begin{abstract}
Patterned random matrices such as the reverse circulant, the symmetric circulant, the Toeplitz and the Hankel matrices
and their almost sure limiting spectral distribution (LSD), have attracted much attention. Under the assumption that the entries are taken from an  i.i.d. sequence with finite variance, the LSD are tied together by a common thread{\textemdash}the $2k$th moment of the limit equals a weighted sum over different types of pair-partitions of the set $\{1, 2, \ldots, 2k\}$ and are universal. Some results are also known for the sparse case. 

In this paper we  generalise these results by relaxing significantly the i.i.d. assumption. For our models, the limits are defined via a larger class of partitions and are also not universal. Several existing and new results for patterned matrices, their band and sparse versions, as well as for matrices with continuous and discrete variance profile follow  as special cases.

\end{abstract}
\vskip 5pt
\noindent \textbf{Key words and phrases.} Empirical spectral distribution, moment method, limiting spectral distribution, reverse circulant matrix, symmetric circulant matrix, Toeplitz matrix, Hankel matrix, sparse matrix, symmetric partition, even partition, cumulant, half cumulant, compound Poisson, variance profile.  
\vskip 5pt
\noindent \textbf{AMS 2010 Subject Classifications.} Primary 60B20; Secondary  60B10.
\medskip

\section{Introduction}\label{introduction} Suppose $A_n$ is an $n\times n$ real symmetric random matrix with (real) eigenvalues $\lambda_1,\lambda_2,\ldots,\lambda_n$. Its \textit{empirical spectral mesure} is the random probability measure:
\begin{align*}
\mu_{A_n} = \frac{1}{n} \sum_{i=1}^{n}  \delta_{\lambda_i},
\end{align*}
where $\delta_x$ is the Dirac measure at $x$. The probability distribution function, $F^{A_n}$, known as the \textit{empirical spectral distribution} (ESD) of $A_n$ is given by 
$$F^{A_n}(x,y)= \frac{1}{n}\displaystyle \sum_{i=1}^n \boldsymbol{1} (\Re(\lambda_i)\leq x, \Im(\lambda_i)\leq y).$$
Clearly the ESD is a random distribution on $\mathbb{C}$. However, if all the eigenvalues of $A_n$ are real, then the ESD is a function on $\mathbb{R}$ and is given by
$$F^{A_n}(x)= \frac{1}{n}\displaystyle \sum_{i=1}^n \boldsymbol{1} (\lambda_i\leq x).$$
The expected empirical spectral distribution (EESD), denoted by $\mathbb{E}[F^{A_n}]$ is also a distribution function and is non-random. The probability measure corresponding to the EESD will be denoted by $\mathbb{E}\mu_{A_n}$.

 The notions of convergence, as  $n\to\infty$, for the ESD used in this article are, the \textit{weak convergence of the EESD} and the \textit{weak convergence of the ESD} (either in \textit{probability} or \textit{almost surely}). The limits are identical when the latter limits are non-random. In any case, any of these limits will be referred to as the \textit{limiting spectral distribution} (LSD) of $\{A_n\}$. 

We shall consider the reverse circulant matrix, the symmetric circulant matrix, the symmetric Toeplitz matrix and the  Hankel matrix  where the entries of the matrices are independent for each fixed $n$ but not identically distributed. 

\vspace{.2cm}
\noindent{\bf Reverse circulant matrix:} An $n\times n$  reverse circulant matrix is defined as 
$$
RC_n=\left(\begin{array}{ccccc}
x_0 & x_1 & x_2 & \cdots   & x_{n-1} \\
x_1 & x_2 & x_3 & \cdots   & x_{0}\\
x_2 & x_3 & x_4 & \cdots   & x_{1} \\
\vdots & \vdots & {\vdots} & \ddots & \vdots \\
x_{n-1} & x_0 & x_1 & \cdots   & x_{n-2}
\end{array}\right).
$$
For $1\leq j \leq n-1$, its $(j+1)$-th row is obtained by giving its $j$-th row a left circular shift by one position. The matrix is  symmetric and the $(i,\;j)$-th element of the matrix is $x_{(i+j-2) \bmod  n}$.

\vspace{.2cm}
\noindent{\bf Symmetric circulant matrix:} An $n\times n$  symmetric circulant matrix is defined by
$$
SC_n=\left(\begin{array}{cccccc}
x_0 & x_1 & x_2 & \cdots  & x_1 \\
x_1 & x_0 & x_1 & \cdots  & x_{2}\\
x_{2} & x_1 & x_0 & \cdots & x_{3}\\
\vdots & \vdots & {\vdots} & \ddots  & \vdots \\
x_1 & x_2 & x_3 & \cdots  & x_0
\end{array}\right).
$$
The $(i,\;j)$-th element of the matrix is $x_{\frac{n}{2}-|\frac{n}{2}-|i-j||}$. Observe that for $j=1,2,\ldots, n-1$, its $(j+1)$-th row is obtained by giving its $j$-th row a right circular shift by one position. 

\vspace{.2cm}
\noindent{\bf Symmetric Toeplitz matrix:} The symmetric Toeplitz matrix is defined as 
$$
T_n=\left(\begin{array}{cccccc}
x_0 & x_1 & x_2 & \cdots &  x_{n-1} \\
x_1 & x_0 & x_1 & \cdots &  x_{n-2}\\
x_2 & x_1 & x_0 & \cdots &  x_{n-3} \\
\vdots & \vdots & {\vdots} & \ddots  & \vdots \\
x_{n-1} & x_{n-2} & x_{n-3} & \cdots  & x_{0}
\end{array}\right).
$$
The $(i,\;j)$-th element of the matrix is $x_{|i-j| }$. Note that the symmetric circulant matrix is a Toeplitz matrix with the added restriction that $x_{n-j}=x_j$.

\vspace{.2cm}
\noindent{\bf Hankel matrix :} An $n\times n$  Hankel matrix is defined as 
$$
H_n=\left(\begin{array}{cccccc}
x_2 & x_3 & x_4 & \cdots &  x_{n+1} \\
x_3 & x_4 & x_5 & \cdots &  x_{n+2}\\
x_4 & x_5 & x_6 & \cdots &  x_{n+3} \\
\vdots & \vdots & {\vdots} & \ddots  & \vdots \\
x_{n+1} & x_{n+2} & x_{n+3} & \cdots  & x_{2n}
\end{array}\right).
$$
The $(i,\;j)$-th element of the matrix is $x_{(i+j) }$.

If the variables $\{x_i\}$ are i.i.d. with mean $0$ and variance $1$, then for all these matrices, after scaling by $n^{-1/2}$, the ESD converges weakly almost surely as $n\to \infty$ and in each case the LSD does not depend on the underlying distributions. More details about the LSD are given later.

A natural question is what happens if the entries do not come from an i.i.d. sequence  and/or the distribution of the entries varies with $n$. One such question is addressed in \citep{zakharevich2006generalization} where the author considered the Wigner matrix with entries that are i.i.d for every $n$ with certain moment conditions and concluded the convergence of the ESD in  probability.  We also address this question for the above mentioned matrices in this article. Keeping only the independence assumption, under appropriate moment conditions on the entries of the matrices, we prove several LSD results for these matrices. In the process we show that unlike the i.i.d. case where only certain pair partitions are involved in the LSD, many other interesting partition classes are involved in the independent case. Another fall out is that the LSD are not fully universal but are dictated by the moment conditions assumed. The LSD results for the i.i.d. case alluded to in the previous paragraph can be deduced from our results. 

Here is the plan of the paper. In Section \ref{existing result} we give a description of the existing results for some patterned matrices with i.i.d input. In Section \ref{mainresults} we state our main results on the LSD of these matrices when the input is independent but not neceesarily i.i.d.  In Section \ref{Circuits and Words} we briefly recall the 
well-known ideas of link function, words/partitions etc. that are crucial in applying the moment method. In Section \ref{contributing words} we identify the classes of partitions which contribute to the limit moments for the different matrices. Section \ref{proofs of the theorems} contains the proofs of the theorems. Section \ref{example and discussion} provides some insight into the limits as well as application to   matrices with discrete and continuous variance profiles, band matrices, triangular matrices and sparse matrices. 

\section{Existing results}\label{existing result}
We first recall the existing results on the LSD of these patterned matrices.
\citep{bose2002limiting} and then \citep{bose2008another} showed the convergence of the ESD of $n^{-1/2}RC_n$ to the LSD (described in Result \ref{result:rev}),  weakly in probability and  almost surely respectively.
We refer the readers to  \citep{bose2018patterned} for a detailed proof.

\begin{result}\label{result:rev} Suppose that the entries $\{x_n;n\geq 0\}$ are  i.i.d. with mean 0 and  variance 1. Then, as $n \to \infty$, the almost sure LSD of $\frac{1}{\sqrt{n}}RC_n$ is the symmetrised Rayleigh distribution. This law, say $\mathcal{R}$,  has the following density
$$f(x)= |x| \exp(-x^2), \ \ x \in \mathbb{R}.$$
\end{result}

The moments $\beta_{k}(\mathcal{R})$ of $\mathcal{R}$ are given by 
$$\beta_k(\mathcal{R})= \left\{
\begin{array}{cc}
0 &\mbox{if}\ k \ \text{is odd},\\
k! & \mbox{if}\ k \ \mbox{is even}.
\end{array}
\right.$$
The LSD of $n^{-1/2}SC_n$  was first studied in \citep{bose2008another}. 
See  Theorem 2.4.2 in \citep{bose2018patterned} for a detailed proof and a short history on the precursors of the following result.
\begin{result}\label{result:sym} Suppose that the entries $\{x_{n}; n\geq 0\}$ are  i.i.d. with mean 0 and variance 1.  Then, as $n \to \infty$, the almost sure LSD of $\frac{1}{\sqrt n}SC_n$ is the standard normal distribution.
\end{result}

The study of the random Toeplitz  and Hankel matrices were initiated in a seminal paper by \citep{bai1999}. \citep{hammond2005distribution} and \citep{bryc2006spectral} established the LSD of $n^{-1/2}T_n$.  The LSD of $n^{-1/2}H_n$ was  established by \citep{bryc2006spectral} and  \citep{liu2011limit} with different techniques.
We refer to  \citep{bose2018patterned} for a detailed proof of the following result on LSD of Toeplitz and Hankel matrices.
\begin{result}\label{result:toe} Suppose that the entries $\{x_{n}; n\geq 0\}$ are i.i.d. with mean 0 and variance 1. Then, 
as $n \to \infty$, the almost sure LSD of $\frac{1}{\sqrt n}T_n$ and $\frac{1}{\sqrt n}H_n$ exist, say $\mathcal{L}_T$ and $\mathcal{L}_H$ respectively, and they are symmetric about 0.
\end{result}

\section{Main results}\label{mainresults}
Before we state our main results we need the notion of \textit{multiplicative extension} of a sequence of numbers on the set of partitions of $\{1,2,\ldots, k\}$.
Let $[k]$ denote the set $\{1,2,\ldots, k\}$ and  $\mathcal{P}(k)$ denote the set of all partitions of $[k]$. 
Suppose $\{c_n\}_{n\geq 1}$ is any sequence of real numbers. Its multiplicative extension is defined on the set of all partitions  $\mathcal{P}(k)$ of $[k]$, $k \geq 1$  
as follows. For any partition $\sigma\in \mathcal P(k)$, define 
$$c_{\sigma}=\prod c_{|V|},$$
where the product is taken over all blocks $V$ of the partition $\sigma$ and $|V|$ denotes the cardinality of the set $V$. 

We define two types of partitions of $[k]$  
which will appear in the LSD of symmetric circulant and reverse circulant matrices, respectively.

\vspace{0.2cm} 
\noindent\textbf{Even partition:} A partition $\sigma=\{V_1,V_2,\ldots,V_j\}$ of $[k]$ is said to be an even partition if each $V_i$ has  even number of integers. The set of all even partitions of $[k]$ is denoted by $E(k)$ and the  set of all even partitions of $[k]$ with $b$  blocks is denoted by $E_b(k)$. 

For example, $\{\{1,3\},\{2,4,5,6\}, \{7,10\},\{8,9\}\}\in E_4(10)$ is an even partition of $[10]$. Observe that if $k$ is odd, then $[k]$ does not have an even partition.

\vspace{0.2cm}
\noindent\textbf{Symmetric partition:}  A partition $\sigma=\{V_1,V_2,\ldots,V_j\}$ of $[k]$ is said to a symmetric partition if each $V_i$ has same number of odd and  even  integers. The set of all symmetric partitions of $[k]$ is denoted by $S(k)$ and  the set of all symmetric partitions of $[k]$ with $b$ blocks is denoted by $S_b(k)$.

For example, $\{\{1,4\},\{2,3,5,6\},\{7,10\},\{8,9\}\}\in S_4(10)$ is a symmetric partition of $[10]$, but  $\{\{1,3\},\{2,4,5,6\}, \{7,10\},\{8,9\}\}\notin S(10)$. Also observe that $S(k)\subset E(k)$.
\vspace{0.2cm}
 
Now suppose that the distribution of the entries for the $n$th matrix depends on $n${\textemdash}they come from a triangular array of sequences $\{x_{i,n};0\leq i\leq n \ \text{or}\ 2n\}_{n\geq 1}$.  To keep the notation simple, we shall often write $x_i$ for  $x_{i,n}$. We introduce the following set of assumptions on the entries.

\vspace{0.2cm}
\noindent \textbf{Assumption A.} 
Let $\{g_{k,n};0\leq k\leq n\}$ be a sequence of bounded Riemann integrable functions on $[0,1]$. Suppose there exists a sequence $\{t_n\}$ with $t_n\in [0,\infty]$ such that 
\begin{enumerate}
\item [(i)] for each $k \in \mathbb{N}$,
\begin{align}
 & n \ \mathbb{E}\left[x_{i}^{2k}\boldsymbol {1}_{\{|x_{i}|\leq t_n\}}\right]= g_{2k,n}\Big(\frac{i}{n}\Big) 
 \ \ \ \text{for } \ 0\leq i \leq n-1, \label{gkeven}\\
& \displaystyle \lim_{n \rightarrow \infty} \ n^{\alpha} \underset{0\leq i  \leq n-1}{\sup} \ \mathbb{E}\left[x_{i}^{2k-1}\boldsymbol {1}_{\{|x_{i}|\leq t_n\}}\right] = 0  \ \ \text{for any } \alpha<1 . \label{gkodd}
\end{align}
\item [(ii)] The functions $g_{2k,n}$ converge uniformly to functions $g_{2k}$ for all $k \geq 1$. 
\item[(iii)] Let  $M_{2k}=\|g_{2k}\|$ (where $\|\cdot\|$ denotes the sup norm) and $M_{2k-1}=0$ for all $k \geq 1$. Suppose $\alpha_{2k}=  \sum_{\sigma \in \mathcal{P}(2k)} M_{\sigma}$  satisfy \textit{Carleman's condition}, 
$$ \displaystyle \sum_{k=1}^{\infty} \alpha_{2k}^{-\frac{1}{2k}}= \infty.$$
\end{enumerate}

Now we state our first theorem.
\begin{theorem}\label{thm:mainrev}
Consider $RC_n$ whose entries $\{x_i;0\leq i< n\}$ are independent and  satisfy Assumption A. Let $Z_n$ be the reverse circulant matrix with the entries $y_{i}= x_{i}\boldsymbol {1}_{\{|x_{i}|\leq t_n\}}$. Then
\begin{enumerate}
\item[(a)] the EESD of $Z_n$ converges weakly to a symmetric probability measure $\mu$, say. The moment sequence of $\mu$ is given by
$$\beta_k(\mu) =\left\{ \begin{array}{cc}
\displaystyle\sum_{\sigma \in S(k)} C_{\sigma} & \text{if } k \text{ is even},\\
0 &  \text{if } k \text{ is odd},
\end{array}
\right.
$$ 
where $C_{2m}= \int_{0}^1 g_{2m}(t) \ dt, m \geq 1$ and $S(k)$ denotes the set of all symmetric partitions of $[k]$.

In particular, if for every $n$, $\{x_{i,n};1\leq i \leq n\}$ are i.i.d., then the above holds with $C_{2k}=\lim g_{2k,n}$ (which are now constant functions).
\item[(b)] Further if 
\begin{align}\label{truncation}
 \sum_{i=0}^{n-1} \mathbb{E}[ x_{i}^2\boldsymbol {1}_{\{|x_{i}| > t_n\}}] \rightarrow 0, 
\end{align}
then the EESD of $RC_n$ converges weakly to $\mu$. 
 \end{enumerate} 
\end{theorem}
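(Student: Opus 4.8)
The plan is to prove part (a) by the method of moments applied to the expected empirical spectral distribution, and then to deduce part (b) by a truncation/perturbation argument. Write $\beta_k(\mathbb{E}\mu_{Z_n}) = \frac1n \mathbb{E}[\Tr(Z_n^k)]$ and expand the trace as a sum over \emph{circuits} $\pi = (\pi_0,\pi_1,\ldots,\pi_{k-1},\pi_k=\pi_0)$, so that each term is $\mathbb{E}\big[\prod_{t=1}^k y_{\xi_t}\big]$ with $\xi_t=(\pi_{t-1}+\pi_t)\bmod n$ the reverse-circulant link value of the $t$th edge. Grouping circuits by their \emph{word} (the partition of $\{1,\ldots,k\}$ recording which edges share a common link value) and using the independence of $\{y_i\}$, the contribution of a word $\sigma$ with blocks $B_1,\ldots,B_b$ becomes $\frac1n\sum_{\text{circuits}}\prod_{j=1}^b \mathbb{E}[y_{\ell_{B_j}}^{|B_j|}]$, where $\ell_B$ is the distinct link value of block $B$.

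First I would carry out the power-of-$n$ bookkeeping. Traversing the circuit, $\pi_0$ is free and each edge that first introduces a new block forces a free choice of its link value, while every repeated edge is determined; hence a circuit is parametrised by $\pi_0$ together with the block values $\ell_{B_1},\ldots,\ell_{B_b}$, subject to the closure relation $\pi_k=\pi_0$. Solving the recursion $\pi_t=\xi_t-\pi_{t-1}$ shows that for even $k$ this closure reduces to $\sum_{t=1}^k(-1)^t\xi_t\equiv 0\pmod n$, equivalently $\sum_j c_{B_j}\,\ell_{B_j}\equiv 0$ with $c_B=\sum_{t\in B}(-1)^t$. The key observation is that this constraint is vacuous precisely when $c_B=0$ for every block, i.e. when each block contains equally many odd and even positions, which is exactly the condition $\sigma\in S(k)$. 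For such symmetric words $\pi_0$ and all $\ell_{B_j}$ are free, giving $\sim n^{b+1}$ circuits; for the remaining words the constraint is genuine and cuts the count to $O(n^b)$.

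Using $\mathbb{E}[y_i^{2m}]=g_{2m,n}(i/n)/n=O(n^{-1})$ from \eqref{gkeven} and $\sup_i|\mathbb{E}[y_i^{2m-1}]|=o(n^{-\alpha})$ for every $\alpha<1$ from \eqref{gkodd}, a short exponent count then shows: (i) a symmetric word contributes $\frac1n\cdot n^{b+1}\cdot\prod_j O(n^{-1})=O(1)$; (ii) an even non-symmetric word contributes $O(n^{-1})\to 0$; and (iii) any word with an odd block, in particular any unmatched word, contributes $o(n^{\,b_o(1-\alpha)-1})\to 0$ once $\alpha$ is chosen close enough to $1$ relative to its number $b_o\geq 1$ of odd blocks. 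For the surviving symmetric words the value is computed by Riemann sums: the uniform convergence $g_{2m,n}\to g_{2m}$ of Assumption A(ii) gives $\frac1n\sum_{\ell=0}^{n-1}g_{2m,n}(\ell/n)\to\int_0^1 g_{2m}=C_{2m}$, and, ignoring the negligible lower-order terms where distinct block values coincide, the contribution of $\sigma$ converges to $\prod_j C_{|B_j|}=C_\sigma$. Summing over $S(k)$ yields $\beta_k(\mu)=\sum_{\sigma\in S(k)}C_\sigma$ for even $k$ and $0$ for odd $k$; since $C_\sigma\le M_\sigma$ we have $\beta_{2k}\le\alpha_{2k}$, so Assumption A(iii) (Carleman) guarantees that $\mu$ is the unique symmetric measure with these moments and upgrades moment convergence to weak convergence of the EESD, proving (a).

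For part (b) I would compare $RC_n$ and $Z_n$ directly. Their difference is the reverse circulant matrix with entries $x_i-y_i=x_i\mathbf{1}_{\{|x_i|>t_n\}}$, and since each link value is attained by exactly $n$ index pairs, $\frac1n\mathbb{E}[\Tr((RC_n-Z_n)^2)]=\sum_{\ell=0}^{n-1}\mathbb{E}[x_\ell^2\mathbf{1}_{\{|x_\ell|>t_n\}}]$, which tends to $0$ by \eqref{truncation}. Feeding this into the Hoffman--Wielandt bound $\frac1n\sum_i(\lambda_i^{RC_n}-\lambda_i^{Z_n})^2\le\frac1n\Tr((RC_n-Z_n)^2)$ and Cauchy--Schwarz gives $\big|\int f\,d\mathbb{E}F^{RC_n}-\int f\,d\mathbb{E}F^{Z_n}\big|\le\|f\|_{\mathrm{Lip}}\big(\mathbb{E}[\tfrac1n\Tr((RC_n-Z_n)^2)]\big)^{1/2}\to 0$ for every bounded Lipschitz $f$, so the EESD of $RC_n$ has the same weak limit $\mu$. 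The main obstacle is the combinatorial heart of (a): isolating the symmetric partitions through the closure relation and carrying out the power counting so that non-symmetric and odd-block words are provably negligible, the latter being the only place where the delicate odd-moment bound \eqref{gkodd}, rather than a crude variance bound, is essential.
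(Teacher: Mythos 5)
Your proposal is correct and follows essentially the same route as the paper's proof: the moment method on the EESD via the circuit/word expansion, the closure relation $\sum_{t}(-1)^{t}\xi_{t}\equiv 0\pmod n$ isolating the symmetric partitions exactly as in Lemma \ref{lem:rev}, Riemann-sum evaluation of the surviving words, Carleman's condition, and a Hoffman--Wielandt comparison for part (b). The only organizational differences are that you skip the paper's preliminary mean-centering step by absorbing singleton blocks into your odd-block estimate (which works because any odd block has $c_B\neq 0$, forcing the $O(n^{b})$ circuit count that your exponent $b_o(1-\alpha)-1$ presupposes), and you parametrize circuits by link values rather than generating vertices, which yields the factorization $C_\sigma=\prod_j C_{|B_j|}$ directly without the wrap-around identity \eqref{integral-rev}; both variations are sound.
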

 Next we deal with the symmetric circulant matrix with independent entries. 
\begin{theorem}\label{thm:mainsym}
Consider  $SC_n$  whose entries $\{x_{i} ;0 \leq i < n\}$ are independent and  satisfy Assumption A. Let $Z_n$ be the  $n\times n$ symmetric circulant matrix with entries $y_{i}= x_{i}\boldsymbol {1}_{\{|x_{i}|\leq t_n\}}$. Then
\begin{enumerate}
\item[(a)] the EESD of $Z_n$ converges weakly to a symmetric probability measure $\mu$, say and  the moments of $\mu$ are given by
$$\beta_k(\mu) =\left\{ \begin{array}{cc}
\displaystyle\sum_{\sigma \in E(k)} a_{\sigma}C_{\sigma} & \text{ if } k \text{ is even},\\
0 &  \text{if } k \text{ is odd},
\end{array}
\right.
$$ where $C_{2m}= 2 \int_{0}^{\frac{1}{2}} g_{2m}(t) \ dt, m \geq 1$ are constants determined by the functions $\{g_{2k}, \ k \geq 1\}$, $a_{2n}=\frac12{2n \choose n}$ and $E(k)$ is the set of all even partitions of $[k]$. 

In particular, if for every $n$, $\{x_{i,n}\}$, $1\leq i \leq n$ are i.i.d., then the above holds with $C_{2k}=\lim g_{2k,n}$.
\item[(b)] Further if 
\begin{align}\label{truncationsym}
 \sum_{i} \ \mathbb{E}[x_{i}^2\boldsymbol {1}_{\{|x_{i}| > t_n\}}] \rightarrow 0, 
\end{align}
then the EESD of $SC_n$ converges weakly to $\mu$.
 \end{enumerate} 
\end{theorem}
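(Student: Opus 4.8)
The plan is to prove Theorem~\ref{thm:mainsym} by the moment method, following the same architecture used for the reverse circulant case in Theorem~\ref{thm:mainrev}, and adapting it to the symmetric circulant link function. First I would set up the combinatorial machinery: write the $k$th moment of the EESD of $n^{-1/2}Z_n$ as
\begin{align}\label{tracesym}
\E\big[\beta_k(\mu_{n^{-1/2}Z_n})\big]=\frac{1}{n^{1+k/2}}\sum \E\big[y_{L(\pi(0),\pi(1))}\cdots y_{L(\pi(k-1),\pi(k))}\big],
\end{align}
where the sum runs over all \emph{circuits} $\pi$ of length $k$ (with $\pi(0)=\pi(k)$) and $L$ is the link function of the symmetric circulant matrix, namely $L(i,j)=\tfrac{n}{2}-\big|\tfrac{n}{2}-|i-j|\big|$. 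Following Section~\ref{Circuits and Words}, circuits are grouped according to the \emph{word} they induce (i.e.\ the partition of edge-indices recording which $L$-values coincide), and the contribution is organized by the partition $\sigma\in\mathcal{P}(k)$ of the edges. Because the entries are independent with mean asymptotically negligible at odd orders (by \eqref{gkodd}), any word containing a block of size one contributes negligibly, so only partitions with all blocks of size $\geq 2$ survive; a standard pair-negligibility/higher-order-counting argument then shows the surviving partitions are exactly the \emph{even} partitions $E(k)$, which forces $\beta_k(\mu)=0$ for odd $k$.

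Next I would carry out the counting of circuits compatible with a fixed even partition $\sigma\in E_b(k)$. The key point specific to the symmetric circulant link is that the constraint $L(\pi(i-1),\pi(i))=L(\pi(j-1),\pi(j))$ means $|\pi(i-1)-\pi(i)|\equiv\pm|\pi(j-1)-\pi(j)|\pmod n$, so each block of $\sigma$ imposes a $\pm$ choice at every edge beyond the first in that block. This is precisely where the combinatorial factor $a_{2n}=\tfrac12\binom{2n}{n}$ enters: for a block of size $2m$ one counts the number of $\pm$ sign assignments whose signed sum vanishes modulo $n$ (the return condition), and in the limit this produces the factor $a_{\sigma}=\prod_{V\in\sigma}a_{|V|}$. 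I would show that the number of free vertices is $b+1$ (one per block plus the starting vertex), so that each even $\sigma$ with $b$ blocks contributes $n^{b+1}$ circuits to leading order, exactly matching the normalization $n^{1+k/2}$ since $2b=k$ for even partitions. Assembling the entry moments via \eqref{gkeven} and passing the Riemann sums to integrals then yields the factor $C_{2m}=2\int_0^{1/2}g_{2m}(t)\,dt$ for each block of size $2m$, giving $\beta_k(\mu)=\sum_{\sigma\in E(k)}a_{\sigma}C_{\sigma}$; Assumption~A(iii) guarantees these moments satisfy Carleman's condition, so $\mu$ is the unique measure with these moments and the EESD of $Z_n$ converges weakly to it, proving part~(a). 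The i.i.d.\ specialization is immediate since then $g_{2k,n}$ is constant.

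For part~(b), I would pass from the truncated matrix $Z_n$ back to $RC_n$, sorry, to $SC_n$, using the truncation hypothesis \eqref{truncationsym}. The standard tool is a bound on the Lévy distance (or on $\int x^2\,d(\text{EESD})$ of the difference) between the ESDs of $n^{-1/2}SC_n$ and $n^{-1/2}Z_n$: by a trace/rank inequality one controls $\frac1n\E\,\Tr\big((n^{-1/2}(SC_n-Z_n))^2\big)$, which for these patterned matrices is bounded by a constant times $\sum_i\E[x_i^2\boldsymbol{1}_{\{|x_i|>t_n\}}]$, and this tends to $0$ by \eqref{truncationsym}. Hence the two EESDs have the same weak limit.

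The main obstacle is the sign-counting in the second paragraph: correctly identifying, for each even block, which assignments of the $\pm$ signs (arising from $|a-b|\equiv\pm|c-d|$) are consistent with the overall circuit-closure constraint $\sum(\pi(i)-\pi(i-1))\equiv 0\pmod n$, and verifying that the count is asymptotically $a_{|V|}$ per block with no lower-order leakage. This requires showing that the closure constraint generically removes exactly one degree of freedom (accounting for the ``$+1$'' in $b+1$) without interacting with the per-block sign choices in a way that changes the leading-order count, and that partitions which are even but \emph{not} symmetric still contribute here (unlike the reverse circulant case), which is exactly what distinguishes the factor $a_\sigma$ and the appearance of $E(k)$ rather than $S(k)$.
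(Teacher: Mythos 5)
Your overall architecture (trace expansion over circuits grouped by words, identification of the surviving partition class, per-block sign counting for the symmetric circulant link, Carleman's condition, and a trace/metric comparison for part (b)) matches the paper's proof, and your description of where the factor $a_{2m}=\tfrac12\binom{2m}{m}=\binom{2m-1}{m}$ comes from (balanced $\pm$ assignments within each block) is the right mechanism. However, there is a genuine gap in your moment accounting, and it is not a cosmetic one: it makes your argument incompatible with the formula you are trying to prove. The theorem concerns $Z_n$ itself, not $n^{-1/2}Z_n$; the scaling is already built into Assumption A, which says $n\,\mathbb{E}[y_i^{2k}]=g_{2k,n}(i/n)$, i.e.\ \emph{every} moment $\mathbb{E}[y_i^{2k}]$ is of order $n^{-1}$. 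The correct normalization is therefore $\tfrac1n\mathbb{E}[\Tr(Z_n)^k]$, and a word with $b$ distinct letters of multiplicities $k_1,\dots,k_b$ contributes $n^{-1}\cdot n^{b+1}\cdot\prod_j\big(g_{k_j,n}(\cdot)/n\big)=O(1)$ \emph{regardless of the block sizes}. That is precisely why all even partitions, not just pair partitions, appear in the limit. Your normalization $n^{-(1+k/2)}$ together with the claim that the count $n^{b+1}$ "exactly matches the normalization $n^{1+k/2}$ since $2b=k$ for even partitions" is only valid for pair partitions: an even partition with a block of size $\geq 4$ has $b<k/2$, so under your accounting its contribution would vanish (or, if one also keeps the $n^{-1}$-scaled moments from \eqref{gkeven}, everything would vanish). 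Either way you cannot arrive at $\beta_k(\mu)=\sum_{\sigma\in E(k)}a_\sigma C_\sigma$; the essential input that non-pair even blocks survive because $n\,\mathbb{E}[y_i^{2m}]\to g_{2m}$ for every $m$ is missing from your argument.

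Two smaller points. First, the "$+1$" in $b+1$ free vertices is simply $\pi(0)$ plus one new vertex per distinct letter; the circuit-closure condition $\sum_i s_i=0$ does not remove a degree of freedom for even words --- rather it is automatically satisfied once the $\pm$ signs within each block are balanced, and it is exactly the requirement that the generating increments $s_{i_j}$ be free that forces each block to be balanced (hence even) and yields the count $\binom{k_i-1}{k_i/2}$ per block. Second, the paper first centres the entries (reducing to $\mathbb{E}y_i=0$ via a $d_2$-estimate) before discarding words with singleton blocks; your appeal to \eqref{gkodd} alone can be made to work but needs the same quantitative care. Your treatment of part (b) via $\tfrac1n\mathbb{E}\Tr\big((SC_n-Z_n)^2\big)$ and a metric bound is essentially the paper's Lemma \ref{lem:metric} argument and is fine once the spurious $n^{-1/2}$ scaling is removed.
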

The following  theorem is for the Topelitz and Hankel matrices with independent entries.
\begin{theorem}\label{thm:maintoe}
 Consider $T_n$ (respectively, $H_n$) whose entries $\{x_i;0\leq i<n\}$ are independent and  satisfy Assumption A. 
Let $Z_n$ be the  $n\times n$ Toeplitz matrix (respectively, Hankel matrix)  with entries $y_i=x_i\boldsymbol {1}_{\{|x_{i}|\leq t_n\}}$. Then
\begin{enumerate}
\item[(a)] the EESD of $Z_n$ converges weakly to a symmetric probability measure $\mu^{\prime}$ (respectively, $\tilde{\mu}$) say,  whose moment sequence is determined by the functions $g_{2k}, \ k \geq 1$.
\item[(b)] Further if 
\begin{align}\label{truncationtoe}
 \sum_{i} \ \mathbb{E}[x_{i}^2\boldsymbol {1}_{\{|x_{i}| > t_n\}}] \rightarrow 0, 
\end{align}
then the ESD of $T_n$ (respectively, $H_n$) converges weakly to $\mu^{\prime}$ (respectively, $\tilde{\mu}$) almost surely (or in probability).
 \end{enumerate} 
\end{theorem}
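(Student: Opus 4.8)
The plan is to prove both parts by the moment method built on the circuit--word formalism of Section~\ref{Circuits and Words}, handling the Toeplitz link $L(i,j)=|i-j|$ and the Hankel link $L(i,j)=i+j$ in parallel. For part~(a) I would start from the trace expansion
\[
\frac1n\,\E\big[\Tr(Z_n^{\,k})\big]=\frac1n\sum_{\pi}\E\Big[\prod_{l=1}^{k} y_{L(\pi(l-1),\pi(l))}\Big],
\]
the sum running over circuits $\pi$ of length $k$. Grouping circuits by their word (equivalently, the partition $\sigma$ of $[k]$ recording which edges carry a common link value) and using independence, the inner expectation factorizes into $\prod_{V}\E\big[y^{|V|}\text{ at the common value}\big]$ over the blocks $V$ of $\sigma$. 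Because of the normalization built into \eqref{gkeven}, each even block of size $2m$ contributes a factor $\tfrac1n g_{2m,n}(\cdot)$, while by \eqref{gkodd} each odd block contributes $o(n^{-1+\varepsilon})$ uniformly.

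The combinatorial core is a circuit-counting lemma (the analogue for $T_n,H_n$ of the symmetric/even-partition counts behind $RC_n,SC_n$): I would show that a word with $r$ blocks admits at most $O(n^{r+1})$ circuits, and that this maximal order is attained only when every block has even size. For both links this is exactly the condition that the signed increments $\pi(l)-\pi(l-1)$ within each block can be made to cancel, leaving that block's link value and the base vertex free; a word carrying any odd block cannot cancel there, loses a degree of freedom, and so contributes $\tfrac1n\cdot O(n^{r})\cdot o(n^{-r+\varepsilon})=o(1)$. This forces $\beta_k^{(n)}\to0$ for odd $k$ and reduces the even-$k$ analysis to even partitions. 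For each contributing word I would then replace $g_{2m,n}$ by its uniform limit $g_{2m}$ (Assumption~A(ii)) and recognize $\tfrac1{n^{r+1}}\sum_{\text{circuits}}$ as a Riemann sum, so that by Riemann integrability of the $g$'s the contribution converges to a constant $C_\sigma$ equal to an integral of $\prod_V g_{|V|}$ over the region cut out by the circuit constraints. Summing the finitely many contributing words yields the even moments $\beta_{2k}(\mu')$ (resp.\ $\beta_{2k}(\tilde\mu)$), manifestly determined by $\{g_{2m}\}$. Finally the crude bound $|C_\sigma|\le c_\sigma M_\sigma$ gives $\beta_{2k}\le c\,\alpha_{2k}$, so Assumption~A(iii) (Carleman) guarantees that these moments determine a unique symmetric law and that moment convergence upgrades to weak convergence of the EESD.

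For part~(b) I would first remove the truncation. Writing $T_n-Z_n$ (resp.\ $H_n-Z_n$) for the matrix of discarded entries $x_i\boldsymbol{1}_{\{|x_i|>t_n\}}$ and noting that each index value occupies $O(n)$ matrix positions, a Hoffman--Wielandt estimate gives $\tfrac1n\,\E\,\norm{T_n-Z_n}_F^2\le c\sum_i\E[x_i^2\boldsymbol{1}_{\{|x_i|>t_n\}}]\to0$ by \eqref{truncationtoe}; hence the $L^2$-Wasserstein distance between the two ESDs tends to $0$ and $T_n,Z_n$ share the limit $\mu'$. To pass from the EESD to the genuine ESD I would prove concentration of $\tfrac1n\Tr(Z_n^{\,k})$ about its mean: expanding $\var\!\big(\tfrac1n\Tr Z_n^{\,k}\big)$ as a double circuit sum, only pairs of circuits sharing a common link value survive, and the same counting lemma shows such constrained pairs are of strictly lower order, giving a variance (or fourth-moment) bound of order $n^{-2}$. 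Borel--Cantelli then yields almost sure convergence of each moment, and combining this with the truncation step gives the stated almost sure (or in-probability) convergence for $T_n$ and $H_n$.

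I expect the main obstacle to be precisely this Toeplitz/Hankel circuit-counting lemma. Unlike $RC_n$ and $SC_n$, where the contributing words are cleanly identified with symmetric/even partitions and evaluated in closed form, here there is no explicit description; one must argue abstractly that the limit exists and is governed by the $g_{2m}$, carefully tracking how the global constraint $\sum_l(\pi(l)-\pi(l-1))=0$ together with the sign cancellations inside even blocks pins down the exact $n^{r+1}$ order and the correct integration region defining $C_\sigma$.
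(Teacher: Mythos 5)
Your overall architecture coincides with the paper's: the moment method via the circuit--word expansion, a counting lemma for $\lim n^{-(b+1)}|\Pi(\boldsymbol{\omega})|$, Riemann-sum convergence of the $g_{2m,n}$ factors to integrals determined by the $g_{2m}$, Carleman's condition via domination by $\alpha_{2k}$ (the paper dominates by the symmetric circulant and reverse circulant moment sequences), and for part (b) a Hoffman--Wielandt/$d_2$ estimate to remove the truncation together with a fourth-moment circuit bound and Borel--Cantelli; this is precisely what the paper supplies through Lemmas \ref{lem:metric} and \ref{lem:genmoment} and the bound \eqref{four circuits}.

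The genuine gap is in your combinatorial core for the Hankel matrix. You claim that for \emph{both} links the order $n^{b+1}$ is attained exactly when every block has even size, justified by cancellation of the signed increments $\pi(l)-\pi(l-1)$. That mechanism is correct for the Toeplitz link $|i-j|$ (Lemma \ref{lem:toe}), but for the Hankel link $i+j$ the link value is $t_l=\pi(l)+\pi(l-1)$, the matching relations are exact equalities $t_l=t_{i_j}$ with no sign freedom, and the circuit condition is the \emph{alternating} sum
\begin{equation*}
(t_1+t_3+\cdots+t_{2k-1})-(t_2+t_4+\cdots+t_{2k})=\pi(0)-\pi(2k)=0,
\end{equation*}
which forces each letter to appear equally often in odd and even positions. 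The contributing words are therefore the \emph{symmetric} words (Lemma \ref{lem:han}), a strictly smaller class than the even words. Concretely, $abab$ is even but not symmetric: for the Hankel link, $t_1=t_3$ and $t_2=t_4$ together with $\pi(0)=\pi(4)$ force $\pi(2)=\pi(0)$, so a generating vertex is lost and $|\Pi(abab)|=\mathcal{O}(n^2)$ rather than $n^3$, whereas for Toeplitz this word contributes positively. As written, your lemma would either assign spurious positive mass to such words in the Hankel limit (wrong moments), or, if each contribution is computed as the volume of the constraint region, it leaves unproved which words actually attain order $n^{b+1}$, hence that the limits exist and that the symmetric words give strictly positive contributions. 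You need the separate alternating-sum argument of Lemma \ref{lem:han} for the Hankel half. A secondary imprecision on the Toeplitz side: the limit for an even word is a sum of $\prod_i\binom{k_i-1}{k_i/2}$ integrals, one for each admissible sign pattern in the relations $s_l=\pm s_{i_j}$, which your single ``region cut out by the circuit constraints'' does not make explicit.
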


In Section \ref{example and discussion}, we shall have a discussion on our main assumption (Assumption A)  and our results. In particular, we shall show how Results \ref{result:rev}{\textemdash}\ref{result:toe}  can be derived from our results and new LSD results arise for different variants of these matrices. We shall also see how results on band matrices, triangular matrices and matrices with variance profiles follow from the arguments used in the proofs of the above theorems.

\section{Preliminaries}\label{Circuits and Words}
 Let 
\begin{align*}
L_n:\{1,2,\ldots ,n\}^2 \longrightarrow \mathbb{Z}, \ \,n \geq 1
\end{align*}
be a sequence of functions, called \textit{link functions}. Often we write $L_n=L$ for convenience. All four matrices can be represented using link functions as follows: 
\begin{itemize}
\item Reverse circulant matrix: $RC_n=\big(x_{L(i,j)}\big)$ where $L(i,j)=(i+j-2)\bmod n$.
\item Symmetric circulant matrix: $SC_n=\big(x_{L(i,j)}\big)$ where $L(i,j)=\frac{n}{2}- \big|\frac{n}{2}-|i-j|\big|$.
\item Symmetric Toeplitz matrix: $T_n=\big(x_{L(i,j)}\big)$ where $L(i,j)=|i-j|$.
\item Hankel matrix: $H_n=\big(x_{L(i,j)}\big)$ where $L(i,j)=i+j$.  
\end{itemize}
Now we define a property of the link functions  which will play an important role.

\vspace{0.2cm}
Define $$\Delta(L)= \displaystyle \sup_n \sup_{t \in \mathbb{Z}} \sup_{1 \leq k \leq n} \#\{l: 1\leq l \leq n, L(k,l)=t\} < \infty.$$
Observe that $\Delta(L)=1$ for reverse circulant and Hankel matrices, and $\Delta(L)=2$ for symmetric circulant and Toeplitz matrices. 

\vspace{.2cm}  
\noindent A \textbf{circuit} $\pi$ is a function $\pi : \{0,1,2,\ldots,k\}\longrightarrow \{1,2,3,\ldots,n\}$ with $\pi(0)=\pi(k)$. We say that the \textit{length} of  $\pi$ is $k$ and denote it by $\ell(\pi)$. Suppose $A_n$ is any of the four patterned matrices with link function $L$, that is, $A_n=\big(x_{L(i,j)}\big)$. Then using  circuits, we can express the trace, $\Tr(A_n^k)$ as
\begin{align}\label{momentf1usual}
\Tr(A_{n}^{k})  = \sum_{\pi:\ell(\pi)=k}x_{L(\pi(0),\pi(1))}x_{L(\pi(1),\pi(2))}\cdots x_{L(\pi(k-1),\pi(k))}=\sum_{\pi:\ell(\pi)=k}X_{\pi},
\end{align}
where $X_{\pi}= x_{L(\pi(0),\pi(1))}x_{L(\pi(1),\pi(2))}\cdots x_{L(\pi(k-1),\pi(k))}$. For any $\pi$, the values $L(\pi(i-1),\pi(i))$ will be called \textit{edges}. When an edge appears more than once in a circuit $\pi$, then it is called 
\textit{matched}. Any $m$ circuits $\pi_1,\pi_2,\ldots,\pi_m$ are said to be \textit{jointly-matched} if each edge occurs at least twice across all circuits. They are said to be \textit{cross-matched} if each circuit has an edge which occurs in at least one of the other circuits. \\

\noindent \textbf{Equivalence of Circuits}: Circuits $\pi_1$ and $\pi_2$ are \textit{equivalent} if and only if for all $1 \leq i,j \leq k$,
\begin{align*}
L(\pi_1(i-1),\pi_1(i))=L(\pi_1(j-1),\pi_1(j))
\Longleftrightarrow L(\pi_2(i-1),\pi_2(i))=L(\pi_2(j-1),\pi_2(j)).
\end{align*} 
The above is an equivalence relation on $\{\pi:\ell(\pi)=k\}$. Any equivalence class  of circuits can be indexed by an element of $\mathcal{P}(k)$. The positions where the edges match are identified by each block of a partition of $[k]$. For example, the partition $\{\{1,3\}, \{2,4,5\}\}$ of $[5]$ corresponds to the equivalent class 
$$\big\{\pi:\ell(\pi)=5\ \mbox{and}\ L(\pi(0),\pi(1))=L(\pi(2),\pi(3)),\ L(\pi(1),\pi(2))=L(\pi(3),\pi(4))=L(\pi(4),\pi(5))\big\}.$$  
Also, an element of $\mathcal{P}(k)$ can be identified with a \textit{word} of length $k$ of letters.  Given a partition, we represent the integers of the same partition block by the same letter, and   the first occurrence of each letter is in alphabetical order and vice versa. For example, the partition $\{\{1,3\}, \{2,4,5\}\}$ of $[5]$ corresponds to the word $ababb$. On the other hand, the word $aabccba$ represents the partition $\{\{1,2,7\},\{3,6\},\{4,5\}\}$  of $[7]$. A typical word will be denoted by $\boldsymbol{\omega}$ and its $i$-th letter as $\boldsymbol{\omega}[i]$. For example, for the word $\boldsymbol{\omega}=abbacac$, $\boldsymbol{\omega}[2]=b$ and the partition is $\{\{1,4,6\},\{2,3\},\{5,7\}\}$.\\

\noindent \textbf{The class $\Pi(\boldsymbol {\omega})$}: Attached to any word $\boldsymbol {\omega}$, is an equivalence class of circuits $\Pi(\boldsymbol {\omega})$: 
\begin{align*}
\Pi(\boldsymbol {\omega}) &= \Big\{\pi: \boldsymbol {\omega}[i]=\boldsymbol {\omega}[j] \Leftrightarrow L(\pi(i-1),\pi(i))= L(\pi(j-1),\pi(j))\Big\}.
\end{align*}
This implies that for any word $\boldsymbol{\omega}$ of length $k$, the cardinality of $\pi(\boldsymbol{\omega})$ is
\begin{align}\label{Pi(omega)}
\big|\Pi(\boldsymbol {\omega})\big| = \big|\big\{ &\big(\pi(0), \pi(1),\ldots,\pi(k)\big): 1\leq \pi(i)\leq n \text{ for } i=0,1,\ldots,k,\ \pi(0)=\pi(k), \nonumber\\
 &\quad L(\pi(i-1),\pi(i))= L(\pi(j-1),\pi(j)) \text{ if and only if }\boldsymbol {\omega}[i]=\boldsymbol {\omega}[j]  \big\}\big|.
\end{align}
The following two classes of words mirror the partition classes $E(k)$ and $S(k)$ of Section \ref{mainresults}.\\
  
\noindent\textbf{Even word:}
A word $\boldsymbol {\omega}$ is called an \textit{even} word if each distinct letter in $\boldsymbol {\omega}$ appears an even number of times. This is the same as saying that the corresponding partition is even. We shall continue to use $E(2k)$ to denote the set of all even words  of length $2k$. For example, $ababcc$ is an even word of length 6 with 3 letters. The corresponding partition of $[6]$ is $\{\{1,3\},\{2,4\},\{5,6\}\}$.\\
		
\noindent\textbf{Symmetric word:}
A word $\boldsymbol {\omega}$ is  \textit{symmetric} if each distinct letter appears equal number of times in odd and even positions. This is the same as saying that the corresponding  partition is symmetric. We shall continue to use $S(2k)$ to denote the set of all symmetric  words of length $2k$.
For example, $abbbba$ is a symmetric word of length $6$ with $2$ distinct letters. Note that a symmetric word is always even. Hence $S(2k) \subset E(2k) \subset \mathcal{P}(2k)$. But every even word is not symmetric. For example, $ababcc$ is even but not symmetric.\\

\noindent\textbf{Vertex and generating vertex:} Any $\pi(i)$ of a circuit $\pi$ will be called a \textit{vertex}. It is a \textit{generating vertex} if $i=0$ or $\boldsymbol {\omega}[i]$ is the first occurrence of a letter in the  word $\boldsymbol {\omega}$ corresponding to $\pi$. All other vertices are \textit{non-generating}. 

Note that the circuits corresponding to a word $\boldsymbol {\omega}$ are completely determined by the generating vertices: once we have determined the generating vertices, there are only finitely many choices for the non-generating vertices. In particular, for the matrices considered here, the non-generating vertices are linear combinations of the generating vertices due to the structure of the link functions. This can be shown by induction.

The first vertex $\pi(0)$ is always generating and after that there is one generating vertex for each new letter in $\boldsymbol{\omega}$. So, if $\boldsymbol {\omega}$ has $b$ distinct letters then the number of generating vertices is $(b+1)$.  From \eqref{Pi(omega)} we observe that the growth of $|\Pi(\boldsymbol {\omega})|$ is determined by the number of generating vertices that can be chosen freely. For some words, depending on the link function, some of these vertices may not be chosen freely, that is some of the generating vertices might be a linear combination of the other generating vertices. Also for all the four matrices, since $\Delta(L)\leq 2$, we immediately conclude that
\begin{equation}\label{cardiality of word}
| \Pi(\boldsymbol {\omega})| = \mathcal{O}( n^{b+1})\ \ \text{whenever} \ \omega \text{ has } \\   b \ \ \text{distinct letters}.
\end{equation}  
As we shall see later, the existence of 
\begin{equation}\label{existence of positive limit}
\lim_{n\to\infty}\frac{| \Pi(\boldsymbol {\omega})|}{n^{b+1}}\end{equation} 
is tied to the LSD. We explore the limit of \eqref{existence of positive limit} in the next section. 

\section{Existence of \ $\lim_{n\to\infty}\frac{| \Pi(\boldsymbol {\omega})|}{n^{b+1}}$}\label{contributing words}
    
\subsection{$\lim_{n\to\infty}\frac{| \Pi(\boldsymbol {\omega})|}{n^{b+1}}$ for Reverse Circulant} 
		
	\begin{lemma}\label{lem:rev} For each word $\boldsymbol {\omega}$ with $b$ distinct letters
				\begin{eqnarray} 
		 \lim_{n \rightarrow \infty}\frac{1}{n^{b+1}} | \Pi(\boldsymbol {\omega})|=  
\begin{cases} 1 \ \ \text{\rm if} \ \ \boldsymbol {\omega} \ \ \text{is symmetric},\\
0\ \ \text{\rm otherwise.}
\end{cases}
\end{eqnarray}

	 		\end{lemma}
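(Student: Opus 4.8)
The plan is to parametrise the circuits in $\Pi(\boldsymbol{\omega})$ by quantities adapted to the reverse circulant link function and then count. Since $L(i,j)=(i+j-2)\bmod n$, the edge at position $l$ is $e_l\equiv \pi(l-1)+\pi(l)-2\pmod n$. I encode a circuit by its starting vertex $v_0=\pi(0)$ together with the $b$ common edge-values $t_1,\dots,t_b\in\{0,1,\dots,n-1\}$ attached to the $b$ distinct letters, so that $e_l=t_{\boldsymbol{\omega}[l]}$. Solving the recursion $\pi(l)\equiv e_l+2-\pi(l-1)\pmod n$ gives the closed form
$$\pi(i)\equiv(-1)^i v_0+\sum_{l=1}^{i}(-1)^{i-l}\big(e_l+2\big)\pmod n,\qquad 0\le i\le k,$$
so every vertex, in particular every non-generating vertex, is an explicit affine function of the $b+1$ parameters and has a unique representative in $\{1,\dots,n\}$. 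Conversely a circuit in $\Pi(\boldsymbol{\omega})$ determines $v_0$ and each $t_a$. Hence $\Pi(\boldsymbol{\omega})$ is in bijection with the set of tuples $(v_0,t_1,\dots,t_b)$ such that (i) the $t_a$ are pairwise distinct, which is precisely the ``only if'' half of the matching condition forcing edges of different letters to differ, and (ii) the closure relation $\pi(k)=\pi(0)$ holds.

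Next I analyse the closure relation, which is the only genuine constraint. Substituting $i=k$ into the formula above and using $\pi(k)=v_0$ yields a single linear congruence
$$\big((-1)^k-1\big)v_0+\sum_{a=1}^{b}c_a\,t_a\equiv \text{const}\pmod n,\qquad c_a=\sum_{l:\boldsymbol{\omega}[l]=a}(-1)^{k-l}.$$
When $k$ is even the coefficient of $v_0$ vanishes, the additive constant $2\sum_{l=1}^k(-1)^{k-l}$ also vanishes, and $c_a$ reduces to the number of even positions of $a$ minus the number of odd positions of $a$. Thus for even $k$ the congruence reads $\sum_a c_a t_a\equiv0$, and it holds for every tuple $(t_a)$ \emph{if and only if} all $c_a=0$, i.e.\ if and only if each letter occupies equally many odd and even positions, i.e.\ if and only if $\boldsymbol{\omega}$ is symmetric.

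The count then splits into two cases. If $\boldsymbol{\omega}$ is symmetric the closure relation is vacuous, so $|\Pi(\boldsymbol{\omega})|$ is the number of tuples with $v_0\in\{1,\dots,n\}$ arbitrary and $t_1,\dots,t_b$ distinct, namely $n\cdot n(n-1)\cdots(n-b+1)=n^{b+1}(1+o(1))$, giving limit $1$. Otherwise the closure relation is nontrivial. For $k$ even and $\boldsymbol{\omega}$ not symmetric some $c_a\ne0$, so $\sum_a c_a t_a\equiv0$ has $n^{b-1}\gcd(c_1,\dots,c_b,n)\le k\,n^{b-1}$ solutions in the $t_a$ (the gcd being bounded since the $c_a$ are fixed integers of modulus at most $k$); together with the $n$ choices of $v_0$ this gives $|\Pi(\boldsymbol{\omega})|=O(n^{b})$. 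For $k$ odd the coefficient $(-1)^k-1=-2$ is nonzero, so the congruence determines $v_0$ from $(t_a)$ up to at most $\gcd(2,n)\le2$ values, again giving $|\Pi(\boldsymbol{\omega})|=O(n^{b})$; since a symmetric word necessarily has even length, this is consistent with the claimed limit $0$. In either case $|\Pi(\boldsymbol{\omega})|/n^{b+1}\to0$.

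The step needing the most care is the ``only if'' half together with the leading-order bookkeeping: one must ensure that imposing distinctness of the $t_a$ removes only a negligible set, so that the leading coefficient in the symmetric case is exactly $1$ and not smaller. This is transparent here, since the set where some $t_a=t_{a'}$ is a union of $O(1)$ coordinate hyperplanes, each carrying only $O(n^{b})$ tuples, hence contributing $O(n^{b})=o(n^{b+1})$. The only other point to watch is the additive constant produced by the $-2$ in the link function, which I verify cancels precisely when $k$ is even, so that no spurious constraint is created in the symmetric case.
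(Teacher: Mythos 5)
Your proof is correct, and it takes a genuinely more explicit route than the paper's. Both arguments rest on the same telescoping identity---the alternating sum of the edge values at positions $1,\dots,k$ equals $\pi(0)-\pi(k)=0$---and both extract from it the characterisation of symmetric words. The paper, however, works with the integer sums $t_i=\pi(i)+\pi(i-1)$ and proceeds qualitatively in four steps: it shows that free choice of the generating vertices is equivalent to the absence of a non-trivial linear relation among $\pi(0)$ and the $t_{i_j}$, deduces that such freeness forces the word to be symmetric (and of even length), and then, for symmetric words, runs a separate induction to check that each non-generating vertex is pinned down uniquely among the three candidates $A-n,A,A+n$. You instead exhibit an exact bijection between $\Pi(\boldsymbol{\omega})$ and the tuples $(v_0,t_1,\dots,t_b)$ of starting vertex and pairwise distinct edge residues, solve the recursion in closed form modulo $n$, and reduce the entire matching structure to a single linear congruence coming from the closure condition $\pi(k)=\pi(0)$. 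This buys you exact counts rather than asymptotics: $n\cdot n(n-1)\cdots(n-b+1)$ in the symmetric case, and $O(n^{b})$ otherwise via the standard solution count $n^{b-1}\gcd(c_1,\dots,c_b,n)$ for a linear congruence (with the odd-length case killed by the nonzero coefficient $-2$ of $v_0$). Working with residues from the outset also absorbs the paper's Step 4 uniqueness argument into the choice of representative in $\{1,\dots,n\}$, and you correctly note that imposing distinctness of the $t_a$ discards only $O(n^{b})$ tuples, so the leading constant in the symmetric case is exactly $1$. The two proofs are logically equivalent, but yours is self-contained, bijective, and quantitatively sharper.
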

			
		\begin{proof}
First note the following. Let 
\begin{equation*}
t_i= \pi(i)+ \pi(i-1)  \text{ for } 1 \leq i \leq 2k.
\end{equation*}
Clearly,  $\boldsymbol {\omega}[i]=\boldsymbol {\omega}[j]$ if and only if
$$(\pi(i-1)+\pi(i)-2)\bmod n =  (\pi(j-1) +  \pi(j)-2)\bmod n,$$ 
that is,  
		$$t_{i} -  t_{j}  =  0,\ n \text{ or }  -n.$$ 
Now fix an $\boldsymbol {\omega}$ with $b$ distinct letters. Suppose the distinct letters appear for the first time at the positions $i_1,i_2,\ldots,i_b$. Clearly $t_{i_1}=t_1$. If the first letter appears again in the $j$-th position, then
\begin{equation*}
t_j = t_1 \ (\bmod \  n). 
\end{equation*}
Similarly,  for every $i,\ 1 \leq i \leq 2k$,
\begin{equation}\label{rc1}
t_i\ =\ t_{i_j}\ (\text{mod}\ n) \  \  \ \text{for some }j \in \{1,2,\ldots,b\}.
\end{equation}
We break the proof into four steps.\vskip3pt

\noindent 
\textbf{Step 1}. If $\{\pi(i_j), 0 \leq j \leq b\}$ can be chosen freely (where $i_0=0$), then  $\{\pi(0),t_{i_j};  1 \leq j \leq b\}$ do not satisfy any non-trivial linear relation.

To prove this, assume the contrary. 
Then there exists constants $\alpha_j, 0\leq j \leq b$, which are not simultaneously zero, and a constant $c$ such that 
\begin{equation}\label{t_i-expressn}
\alpha_0\pi(0)+\alpha_1t_{i_1}+ \alpha_2 t_{i_2} + \cdots + \alpha_b t_{i_b} =c.
\end{equation}
Choose the greatest index $j$ (say $m$) such that $\alpha_m \neq 0$. If $m= 0$, then $\pi(0)=c/\alpha_0$ which is not possible as it has free choice. So $m\neq 0$. Now $t_{i_m}= \pi(i_m)+\pi(i_m-1)$ where  $\pi(i_m-1)$ is a linear combination of $\{\pi(i_s): 0 \leq s \leq m-1\}$.  Also the other $t_{i_j}$'s with non-zero $\alpha_j$ can be written as linear combinations of  $\{\pi(i_s): 0 \leq s \leq m-1\}$, using the formula for $t_{i_j}$ and the fact that all non-generating vertices are linear combination of the generating vertices. 

Then substituting these values in \eqref{t_i-expressn}, we get a linear equation in $\pi(i_j)$'s where the coefficient of $\pi(i_m)$ is $\alpha_m \neq 0$. Hence $\pi({i_j})$'s satisfy a non-trivial linear equation which is not possible, because $\{\pi(i_j), 0 \leq j \leq b\}$ can be chosen freely. 
Hence the claim is proved. 

\vskip3pt

\noindent 
\textbf{Step 2}.
Suppose $\pi(0)$ and $t_{i_j}, 1\leq j \leq b$ can be chosen freely. Then the generating vertices $\{\pi(i_j) : 0 \leq j \leq b\}$ do not satisfy any non-trivial linear relation.

Suppose $\{\pi(i_j) : 0 \leq j \leq b\}$ satisfy a non-trivial linear relation. 
So there exists $\alpha_0,\alpha_1,\alpha_2,\ldots,\alpha_b$, not all zero, and a constant $c$ such that 
\begin{align}\label{pi-expressn}
\alpha_0 \pi(i_0)+\alpha_1 \pi(i_1)+ \cdots +\alpha_b \pi(i_b)=c.
\end{align}
Choose the greatest index $j$, say $m$, such that $\alpha_m \neq 0$. Now $\pi({i_m})= t_{i_m}-\pi(i_m-1)$ where  $\pi(i_m-1)$ is a linear combination of $\{\pi(0), t_{i_j}: 1 \leq s \leq m-1\}$. Also other $\pi_{i_j}$'s can be written as a linear combination of $\{t_{i_s}: 1 \leq s \leq b\}$ and $\pi(0)$ using the formula for $t_{i_j}$. Using these in \eqref{pi-expressn}, $\pi(0)$ and $t_{i_j}$'s satisfy a non-trivial linear relation. 
But that cannot be possible as they can be chosen freely. 
  This proves the claim. 
\vskip3pt

\noindent \textbf{Step 3}.
Suppose $\pi(0)$ and $t_{i_j},1\leq j\leq b$  can be chosen freely. Then the word is symmetric.

To see this, if $\boldsymbol{\omega}$ is of length $2k$, then for any corresponding circuit 
\begin{equation}\label{rc2}
(t_1+t_3+\cdots+t_{2k-1}) - (t_2+t_4+\cdots+t_{2k})= \pi(0)- \pi(2k)= 0.
\end{equation}
Therefore, using \eqref{rc2} and \eqref{rc1}, we see that there exists $\alpha_j \in \mathbb{Z}$ for all $1 \leq j \leq b$ such that  
\begin{equation*}
\alpha_1t_{i_1}+ \alpha_2 t_{i_2} + \cdots + \alpha_b t_{i_b} = 0\ (\text{mod } n).
\end{equation*}
For the $t_{i_j}$'s to have free choice, we must have $\alpha_j=0$
 for all $j \in \{1,2,\ldots,b\}$. Thus for each $j$,
\begin{align*}
\big|\{l:l \text{ odd, }t_l=t_{i_j}\ (\text{mod } n)\}\big|=\big|\{l:l \text{ even, }t_l=t_{i_j}\ (\text{mod } n) \}\big|.
\end{align*} 
 So each letter appears equal number of times at odd and even places and the word is symmetric.

Now if the length of the word is odd, say $2k+1$, then $$(t_1+t_2+\cdots+t_{2k+1})-(t_2+t_4+\cdots+t_{2k})=\pi(2k+1)+\pi(0)=2\pi(0).$$  
Now substituting $t_i$ by $t_{i_j}$'s using \eqref{rc1} in the above equation, we see that $\pi(0),t_{i_1},\ldots,t_{i_b}$ satisfies a non-trivial linear relation. Therefore, $\pi(0)$ and $t_{i_j},1\leq j\leq b$ cannot be chosen freely. This contradicts our assumption. Hence the length of the word cannot be odd. 
 
 
 So, if the word is not symmetric, then using Steps 1 and 2 it follows that at least one of the generating vertices cannot be chosen freely and hence has at most a finite number of choices as $n\to \infty$. 
As a result, 
$$\lim_{n \rightarrow \infty}\frac{1}{n^{b+1}} | \Pi(\boldsymbol{\omega})|= 0\ \ \text{for any non-symmetric word} \ \boldsymbol{\omega}.$$
\noindent \textbf{Step 4}.
Suppose $\boldsymbol {\omega}$ is symmetric with $b$ distinct letters. Then $\pi({i_j})$ $(0 \leq j \leq b)$ can be chosen freely and then the non-generating vertices can be chosen uniquely. First we choose $\pi(0)$ and $\pi(1)$ freely. Next, if $\pi(2)$ is a generating vertex, we choose it freely else we consider the relation between $t_1$ and $t_2$. Clearly as $\pi(2)$ is not a generating vertex, $\boldsymbol {\omega}[2]$ is an old letter and hence $t_1=t_2$ which implies $\pi(0)=\pi(2)$. 

  To prove that the other non-generating vertices can be chosen uniquely, we argue inductively from left to right.
Suppose $\pi(i)$ is a non-generating vertex  and the non-generating vertices among $\pi(k)\ (1 
 \leq k \leq i-1)$ have already been chosen uniquely. Suppose  the $j$-th distinct letter appears at the $i$-th  position. Then we know from \eqref{rc1} that
 \begin{equation}\label{rc3}
 \pi(i)  =  \pi(i_j-1)+\pi(i_j)-\pi(i-1) \ (\text{mod}\ n),\ \ \ \ \text{for some } j \ \ \text{ where }i_j\leq i.
 \end{equation}
Therefore  $\pi(i) = A-n, A, A+n,$ where $A=\pi(i_j-1)+\pi(i_j)-\pi(i-1)$.
Observe that $-(n-1) \leq A \leq 2n$, and only one of the values $A-n,A,A+n$ can be between 0 and $n-1$. Hence $\pi(i)$ can be determined \textit{uniquely} from \eqref{rc3} as $\pi(i_j-1),\pi(i_j),\pi(i-1)$ have  already been determined.

As a consequence of the above we get, 
$$ \lim_{n \rightarrow \infty}\frac{1}{n^{b+1}} | \Pi(\boldsymbol{\omega})|= 1\ \ \text{for each symmetric word}\ \boldsymbol {\omega}.$$
%
\noindent This completes the proof of the lemma.
 \end{proof}
	
\subsection{$\lim_{n\to\infty}\frac{| \Pi(\boldsymbol {\omega})|}{n^{b+1}}$ for Symmetric Circulant}

\begin{lemma}\label{lem:sym} For each word $\boldsymbol {\omega}$ with $b$ distinct letters 
				\begin{eqnarray} 
		 \lim_{n \rightarrow \infty}\frac{1}{n^{b+1}} | \Pi(\boldsymbol {\omega})|=  
\begin{cases} \prod_{i=1}^b {{k_i-1} \choose \frac{k_i}{2}} \ \ \text{\rm if} \ \ \boldsymbol {\omega} \ \ \text{is even},\\
0\ \ \text{\rm otherwise.}
\end{cases}
\end{eqnarray}
where $k_i$ denotes the number of times the $i$-th distinct letter appeared in $\boldsymbol {\omega}$.
\end{lemma}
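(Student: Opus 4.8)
The plan is to recast the counting of $\Pi(\boldsymbol{\omega})$ in terms of the increments (slopes) of the circuit and to exploit the genuinely circulant structure of $SC_n$. Writing $s_i=\pi(i)-\pi(i-1)$, the first thing I would record is that $L(\pi(i-1),\pi(i))$ depends only on $(\pi(i)-\pi(i-1))\bmod n$: indeed $L(a,b)=\frac n2-\big|\frac n2-|a-b|\big|$ is exactly the circular distance $\min\big(|a-b|,\,n-|a-b|\big)$, which is a function of $(a-b)\bmod n$ and is invariant under a sign change. Hence the matching rule reads $\boldsymbol{\omega}[i]=\boldsymbol{\omega}[j]\iff s_i\equiv\pm s_j \pmod n$, while the circuit condition $\pi(2k)=\pi(0)$ becomes the single closure relation $\sum_i s_i\equiv 0\pmod n$. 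Because the vertices range over $\{1,\dots,n\}$, a complete residue system mod $n$, there is no boundary effect to control (this is what makes $SC_n$ cleaner than the Toeplitz case): once $\pi(0)\in\{1,\dots,n\}$ and the residues $s_i\bmod n$ obeying $\sum s_i\equiv 0$ are fixed, the circuit is uniquely recovered, with $\pi(2k)=\pi(0)$ automatic. Thus $|\Pi(\boldsymbol{\omega})|=n\cdot N_n(\boldsymbol{\omega})$, where $N_n(\boldsymbol{\omega})$ counts tuples $(s_1,\dots,s_{2k})\in(\mathbb{Z}_n)^{2k}$ satisfying the closure relation and realising exactly the pattern $\boldsymbol{\omega}$.

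I would then parametrise $N_n(\boldsymbol{\omega})$ group by group. Let $\boldsymbol{\omega}$ have $b$ distinct letters, the $g$-th occurring $k_g$ times, with first occurrence at $i_g$. For each group introduce a base value $v_g\in\mathbb{Z}_n$ by declaring $s_{i_g}=v_g$ (that is, fixing the sign of the first occurrence to $+1$); every other edge $i$ in group $g$ then satisfies $s_i=\sigma_i v_g$ for a sign $\sigma_i\in\{+1,-1\}$. Fixing the first sign is precisely what removes the two-fold ambiguity $v_g\leftrightarrow -v_g$ (both give the same admissible slope set $\{v_g,-v_g\}$ and hence the same $L$-value), so each configuration is counted once. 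In these coordinates the closure relation becomes $\sum_{g=1}^b c_g v_g\equiv 0\pmod n$, where $c_g=\sum_{i\in g}\sigma_i$ is the net sign of group $g$. For a fixed sign pattern, the number of $(v_1,\dots,v_b)\in(\mathbb{Z}_n)^b$ solving this congruence equals $n^{b-1}\gcd(c_1,\dots,c_b,n)$, which is $n^b$ when all $c_g=0$ and is $O(n^{b-1})$ otherwise since the $|c_g|\le k_g$ are bounded.

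The assembly is then immediate. The dominant contribution to $N_n(\boldsymbol{\omega})$, of order $n^b$, comes exactly from sign patterns with $c_g=0$ for every $g$. Since $c_g$ has the same parity as $k_g$, such patterns exist if and only if every $k_g$ is even, i.e. $\boldsymbol{\omega}$ is even; if some $k_g$ is odd (in particular whenever $|\boldsymbol{\omega}|$ is odd) every term is $O(n^{b-1})$, so $|\Pi(\boldsymbol{\omega})|=O(n^b)=o(n^{b+1})$ and the limit is $0$. When $\boldsymbol{\omega}$ is even, group $g$ admits a balanced pattern with first sign $+1$ precisely by choosing which $k_g/2-1$ of the remaining $k_g-1$ edges carry a $+1$, i.e. $\binom{k_g-1}{k_g/2-1}=\binom{k_g-1}{k_g/2}$ choices, independently across groups. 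Collecting the factor $n$ (for $\pi(0)$), the factor $n^b$ (for the free $v_g$'s) and the product of the sign counts yields $|\Pi(\boldsymbol{\omega})|=n^{b+1}\prod_{g=1}^b\binom{k_g-1}{k_g/2}+O(n^b)$, which is the claim after division by $n^{b+1}$.

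The only real obstacle is the passage from ``pattern refined by $\boldsymbol{\omega}$'' to ``pattern exactly $\boldsymbol{\omega}$''. The count above permits accidental coincidences that would merge distinct letters, namely $v_g\equiv\pm v_{g'}\pmod n$ for $g\ne g'$, as well as degeneracies $v_g\equiv -v_g$ (that is $v_g\in\{0,n/2\}$); each such coincidence imposes one extra congruence or restricts a $v_g$ to $O(1)$ values, so it drops the order to $O(n^b)$ and cannot affect the leading coefficient. I would therefore argue, by the standard inclusion over partitions coarser than $\boldsymbol{\omega}$, that the exact and the refined counts share the same $n^{b+1}$ term, after which only the elementary sign bookkeeping remains. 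Its one delicate feature is the factor of two introduced by fixing the first occurrence's sign, which is exactly what converts $\binom{k_g}{k_g/2}$ into $\binom{k_g-1}{k_g/2}$.
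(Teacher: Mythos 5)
Your proposal is correct, and it reaches the same combinatorial heart as the paper's proof (the matching rule $s_i\equiv\pm s_j\pmod n$ for the increments, the closure relation $\sum_i s_i\equiv 0$, the observation that the leading term comes from sign patterns whose net sign vanishes in every letter-group, and the count $\binom{k_g-1}{k_g/2}$ after fixing the first occurrence's sign), but the counting mechanism is genuinely different. The paper works with the vertices themselves: it shows that freeness of the generating vertices is equivalent to the absence of a non-trivial linear relation among $\pi(0)$ and the $s_{i_j}$, deduces evenness from the closure relation, and then argues inductively that each of the $\prod_i\binom{k_i-1}{k_i/2}$ sign-equation systems determines the non-generating vertices \emph{uniquely} (via the ``only one of $A-n,A,A+n$ lies in range'' argument) and that distinct systems yield distinct circuits. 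You instead exploit that the vertex set is a complete residue system and that the symmetric circulant link function factors through $\mathbb{Z}_n$, so the whole constraint system collapses to the single congruence $\sum_g c_g v_g\equiv 0\pmod n$, whose solution count $n^{b-1}\gcd(c_1,\dots,c_b,n)$ immediately separates the $n^b$ main term (all $c_g=0$) from the $O(n^{b-1})$ remainder. This buys you an essentially exact count with explicit error terms and avoids both the induction on non-generating vertices and the linear-independence bookkeeping; the price is that you must (and do) handle the two sources of degeneracy that the paper's vertex-level argument sidesteps, namely accidental coincidences $v_g\equiv\pm v_{g'}$ merging letters and the self-paired values $v_g\in\{0,n/2\}$, both of which you correctly relegate to lower order. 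Your passage from ``pattern refined by $\boldsymbol{\omega}$'' to ``pattern exactly $\boldsymbol{\omega}$'' is stated rather than carried out, but the inclusion--exclusion over coarser partitions you invoke is standard and only affects terms of order $n^b$, so nothing essential is missing.
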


\begin{proof}
  
First note the following. Let $$ s_i= \pi(i)- \pi(i-1) \  \   \text{for }1 \leq i \leq 2k.$$
Clearly, $\boldsymbol {\omega}[i]=\boldsymbol {\omega}[j]$ 
if and only if $|n/2 - |s_i||  = \ |n/2 - |s_j||$, that is, 
	$ s_i - s_j = 0,\ n \ \text{or}\ -n,$ or $s_i  +  s_j =  0,\ n \  \text{or}\ -n.$
	

Now we fix an $\boldsymbol {\omega}$ with $b$ distinct letters which appear at $i_1,i_2,\ldots,i_b$ positions for the first time. 
Clearly $s_{i_1}=s_1$. If the first letter also appears in the $j$-th position, then
\begin{align*}
s_j =  s_1 \ (\text{mod}\ n) \mbox{ or } s_j = - s_1 \ (\text{mod}\ n) .
\end{align*}
Similarly, for every $i,\ 1 \leq i \leq 2k$,
\begin{align}
s_i =    s_{i_j}\ (\text{mod}\ n)\ \mbox{ or }\ s_i\ =  \ - s_{i_j}\ (\text{mod}\ n), \  \  \text{for some }j \in \{1,2,\ldots,b\}.\label{scminus}
\end{align} 
We break the proof into four steps as before.

%
\vskip3pt
\noindent 
\textbf{Step 1}. If $\{\pi(i_j), 0 \leq j \leq b\}$ can be chosen freely (where $i_0=0$), then  $\{\pi(0),s_{i_j};  1 \leq j \leq b\}$ 
do not satisfy any non-trivial linear relation.

To see this, assume the contrary. 
 Then there exists constants $\alpha_j, 0 \leq j \leq b$, which are not simultaneously zero and a constant $c$ such that 
	 \begin{equation}\label{si-expressn}
		 \alpha_0\pi(0)+\alpha_1s_{i_1}+ \alpha_2 s_{i_2} + \cdots + \alpha_b s_{i_b} = c.
\end{equation}
Choose the greatest index $j$, say $m$, such that $\alpha_m \neq 0$. If $m=0$, then $\pi(0)=c/ \alpha_0$ which is not possible as $\pi(0)$ has free choice. So $m \neq 0$. Now $s_{i_m}= \pi(i_m)-\pi(i_m-1)$ where  $\pi(i_m-1)$ is a linear combination of $\{\pi(i_s): 0 \leq s \leq m-1\}$. Also the other $s_{i_j}$'s with non-zero $\alpha_j$ can be written as linear combination of  $\{\pi(i_s):0 \leq s \leq m-1\}$ using the formula for $s_{i_j}$ and the fact that all non-generating vertices are linear combination of the generating vertices.

Then substituting these values in \eqref{si-expressn}, observe that we get a linear equation in $\pi(i_j)$'s where the coefficient of $\pi(i_m)$ is $\alpha_m \neq 0$. Hence $\pi({i_j})$'s satisfy a non-trivial linear equation but that is impossible as $\{\pi(i_j), 0 \leq j \leq b\}$ can be chosen freely. This proves the claim.

\vskip3pt

\noindent \textbf{Step 2}. Suppose $\pi(0)$ and $s_{i_j},1\leq j\leq b$ can be chosen freely. Then  the generating vertices $\{\pi(i_j), 0 \leq j \leq b\}$ do not satisfy any non-trivial linear relation.

 Assume the contrary. So there exists $\alpha_0,\alpha_1,\alpha_2,\ldots,\alpha_b$ not all zero and a constant $c$ such that 
\begin{align}\label{pi-expressn2}
\alpha_0 \pi(i_0)+\alpha_1 \pi(i_1)+ \cdots +\alpha_b \pi(i_b)=c
\end{align}
Choose the greatest index $j$ (say $m$) such that $\alpha_m \neq 0$. Now $\pi({i_m})= s_{i_m}+\pi(i_m-1)$ where  $\pi(i_m-1)$ is a linear combination of $\{\pi(0),s_{i_q}: 1 \leq q \leq m-1\}$. Also the other $\pi_{i_j}$'s can be written as a linear combination of $\{s_{i_t}: 1 \leq t \leq b\}$ and $\pi(0)$ using the formula for $s_{i_j}$. Using these relations in \eqref{pi-expressn2}, $s_{i_j}$'s satisfy a non-trivial linear relation. But that is impossible as they are chosen freely. 
 This proves the claim.
\vskip3pt

	
\noindent \textbf{Step 3}. Suppose $\pi(0)$ and $s_{i_j}, 1\leq j\leq b$ can be chosen freely. Then the word is even.

To see this, observe that if the length of the word is $k$ then, for any corresponding circuit $\pi$,
\begin{equation*}\label{sc2}
 \sum_{i=1}^{k} s_i = \pi(0) - \pi(k)= 0.
\end{equation*}
Therefore, using \eqref{sc2} and
\eqref{scminus}, we see that there exists $\alpha_j \in \mathbb{Z}$ for all $1 \leq j \leq b$ such that  
\begin{equation*}
\alpha_1s_{i_1}+ \alpha_2 s_{i_2} + \cdots + \alpha_b s_{i_b} =0 \ (\text{mod }n).
\end{equation*}
For $s_{i_j}$'s to have free choice, we must have $\alpha_j=0$ for all $j \in \{1,2,\ldots,b\}$.
Therefore for each $j$, 
 \begin{align}\label{even}
\big|\{l:s_l=s_{i_j} \ (\text{mod } n)\}\big|=\big|\{l:s_l=-s_{i_j}\ (\text{mod } n)\}\big|.
 \end{align} 
  That is, each letter appears an even number of times and the word is even.
So, if the word is not even, then using Steps 1 and 2 it follows that at least one of the generating vertices cannot be chosen freely and hence has at most a finite number of choices as $n \to \infty$. As a result,
$$\lim_{n \rightarrow \infty}\frac{1}{n^{b+1}} | \Pi(\boldsymbol {\omega})|=0 \ \ \text{if}\  \boldsymbol {\omega}  \ \text{is not even}.$$
\noindent \textbf{Step 4}. Suppose that $\boldsymbol {\omega}$ is an even word of length $2k$ with $b$ distinct letters. Then $\pi(i_j)\ ( 0 \leq j \leq b)$ can be chosen freely and then the non-generating vertices can be chosen uniquely. 

First we choose $\pi(0)$ and $\pi(1)$ freely. Next if $\pi(2)$ is a generating vertex, we choose it freely else we consider the relation between $s_1$ and $s_2$. Clearly as $\pi(2)$ is not a generating vertex, the letter in the second position of $\boldsymbol{\omega}$ is old and hence either $s_1=s_2$ or $s_2=-s_1$. If the former is true then $\pi(2)=2\pi(1)-\pi(0)\pm n$ else $\pi(2)=\pi(0)$. In any case we see that there is only one value of $\pi(2)$ between $1$ and $n$. 

Now we shall show that all other non-generating vertices can be chosen uniquely. We argue inductively from left to right. Suppose $\pi(i)$ is a non-generating vertex and the non-generating vertices among $\pi(l)\ (1 
 \leq l \leq i-1)$ has been chosen uniquely. Suppose the $j$-th distinct letter appears at the $i$-th position. Then we know from \eqref{scminus} that
 \begin{align}\label{sc3}
 \pi(i) & =  \pm(\pi(i_j-1)-\pi(i_j))+\pi(i-1) \ (\text{mod}\ n),\ \ \text{for some }j \ \ \text{ where }\  i_j \leq j.
\end{align}
Therefore $\pi(i) = A-n,A,A+n,$ where $A = \pi(i_j-1)-\pi(i_j)+\pi(i-1)$ or  $A =  \pi(i_j)-\pi(i_j-1)+\pi(i-1)$.
Observe that $-(n-1) \leq A \leq 2n$, and only one of the values $A-n,A,A+n$ can be between $1$ and $n$. Hence $\pi(i)$ can be determined \textit{uniquely} from \eqref{sc3} by fixing one of the signs in the equation. This proves the claim. 

Now as $\boldsymbol {\omega}$ is an even word where each distinct letter appears $k_1,k_2,\ldots,k_b$ times (and hence each $k_i$ is even), we first note that by \eqref{even} there are  total $\displaystyle \prod_{i=1}^b {{k_i-1} \choose \frac{k_i}{2}}$ set of equations for determining the non-generating vertices, once the generating vertices are chosen. We have seen from the above argument that only one particular set of equations determine the dependent vertices uniquely. 
Suppose we start with one particular choice of equations. Consider another set of equations. We know that the equations differ only in sign. If the change of sign occurs first at the $m$-th position (i.e. suppose $s_m =    s_{i_j}\ (\text{mod}\ n)$ in the first set,  then in the other set of equations we have, $s_m = -   s_{i_j}\ (\text{mod}\ n)$), then for that $m$, the corresponding value of $A$  changes  because $\pi(m-1)$ remains unaltered and $s_{i_j}$ changes sign in the expression. This changes the value of $\pi(m)$ obtained  from the previous  set of equations. Hence we conclude that each set of equations gives a distinct (unique) choice for $(\pi(0),\pi(1),\ldots,\pi(2k-1),\pi(2k))$. Therefore 
$$\displaystyle{\lim_{n \rightarrow \infty}\frac{1}{n^{b+1}} | \Pi(\boldsymbol {\omega})|= \prod_{i=1}^b {{k_i-1} \choose \frac{k_i}{2}}} \ \ 
\text{if}\ \boldsymbol {\omega} \ \text{is even}.$$ 
\noindent This completes the proof of the lemma.
\end{proof} 
 
\subsection{$\lim_{n\to\infty}\frac{| \Pi(\boldsymbol {\omega})|}{n^{b+1}}$ for Toeplitz Matrix}
\begin{lemma}\label{lem:toe}
	 Suppose $\boldsymbol {\omega}$ is a word with $b$ distinct letters. Then $\displaystyle \lim_{n \rightarrow \infty}\frac{1}{n^{b+1}} | \Pi(\boldsymbol {\omega})|= \alpha(\boldsymbol {\omega})>0$ if and only if $\boldsymbol {\omega}$ is an even word.
\end{lemma}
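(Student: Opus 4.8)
The plan is to follow the same four-step template used in the proofs of Lemmas~\ref{lem:rev} and~\ref{lem:sym}, adapted to the Toeplitz link function $L(i,j)=|i-j|$. As before, set $s_i=\pi(i)-\pi(i-1)$ for $1\le i\le p$, where $p=\ell(\pi)$ is the length of $\boldsymbol{\omega}$. Since $\pi(i)\in\{1,\dots,n\}$ forces $|s_i|<n$, the matching condition now reads $\boldsymbol{\omega}[i]=\boldsymbol{\omega}[j]\iff |s_i|=|s_j|\iff s_i=\pm s_j$, and crucially \emph{no reduction modulo $n$} is needed; this is the simplification over the circulant cases. The compensating difficulty, which is the entire source of the nontrivial constant $\alpha(\boldsymbol{\omega})$, is that the vertices must genuinely lie in $\{1,\dots,n\}$: there is no wrap-around to guarantee solvability, so each admissible sign branch contributes a \emph{volume} rather than the full $n^{b+1}$. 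Steps~1 and~2 of Lemma~\ref{lem:sym} transfer verbatim, since they only use linear algebra over the generating data, yielding the equivalence between free choice of the generating vertices $\{\pi(i_j):0\le j\le b\}$ and free choice of $\{\pi(0),s_{i_1},\dots,s_{i_b}\}$.

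For the ``only if'' direction (non-even $\Rightarrow$ limit $0$) I would use the circuit-closing identity $\sum_{i=1}^{p}s_i=\pi(p)-\pi(0)=0$. Writing each $s_i=\epsilon_i\,s_{i_{j(i)}}$ with $\epsilon_i\in\{+1,-1\}$ and $j(i)$ the index of the letter at position $i$, this becomes $\sum_{j=1}^{b}c_j\,s_{i_j}=0$ with signed counts $c_j=\sum_{i:\,j(i)=j}\epsilon_i$. If some letter appears an odd number of times, then its $c_j$ is a sum of an odd number of $\pm 1$'s, hence $c_j\neq 0$ for \emph{every} choice of signs. Thus on each of the finitely many sign branches the generating data $\{\pi(0),s_{i_j}\}$ satisfies a genuine nontrivial linear relation, which by Steps~1--2 removes one free generating vertex. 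Consequently $|\Pi(\boldsymbol{\omega})|=O(n^{b})=o(n^{b+1})$, and the limit is $0$.

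For the ``if'' direction (even $\Rightarrow$ positive limit) I would partition $\Pi(\boldsymbol{\omega})$ according to the sign vector $(\epsilon_i)$ attached to the non-first occurrences. Because every $k_j$ is even, there exist branches with $c_j=0$ for all $j$, namely exactly the $\prod_{i=1}^{b}\binom{k_i-1}{k_i/2}$ balanced sign patterns counted in Step~4 of Lemma~\ref{lem:sym}. On such a branch the closing constraint is vacuous and each non-generating vertex is an explicit integer affine function of $(\pi(0),s_{i_1},\dots,s_{i_b})$. The number of admissible circuits on that branch is then the number of integer points $(\pi(0),s_{i_1},\dots,s_{i_b})$ for which all $p+1$ vertices fall in $\{1,\dots,n\}$, which a standard Riemann-sum argument identifies, after scaling by $n$, with $n^{b+1}$ times the Lebesgue measure of the region $\{(x_0,y_1,\dots,y_b)\in(0,1)\times\mathbb{R}^{b}:\ \text{all vertices}\in(0,1)\}$. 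Summing these volumes over the admissible branches produces $\alpha(\boldsymbol{\omega})$, while the branches with some $c_j\neq 0$ contribute only $O(n^{b})$ and are discarded. Strict positivity would then follow by exhibiting an open set of generating data, e.g. $x_0$ near $1/2$ and all $y_j$ sufficiently small, on which every partial-sum vertex stays inside $(0,1)$, so at least one branch has positive volume.

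The main obstacle is this ``if'' direction: unlike the circulant cases, where modular wrap-around made each admissible branch contribute the full $n^{b+1}$ and reduced the computation to pure counting, here one must prove that the lattice-point count over each branch genuinely converges to the advertised volume (existence of the limit) and that this volume is strictly positive. Establishing convergence requires the usual care with the boundary effects of the constraint polytope in the Riemann-sum approximation, and positivity requires producing an explicit interior point of the constraint region. I expect these two points to carry the real weight of the argument, with everything else being a transcription of the bookkeeping from Lemma~\ref{lem:sym}.
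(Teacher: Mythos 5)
Your proposal follows essentially the same route as the paper's proof: the same reduction to the increments $s_i=\pi(i)-\pi(i-1)$ with the sign-branch decomposition, the same transfer of Steps 1--2 from Lemma~\ref{lem:sym}, the same use of the circuit identity $\sum_i s_i=0$ to force a nontrivial relation (hence an $O(n^{b})$ count) for non-even words, and the same Riemann-sum-to-volume identification over the $\prod_{i=1}^{b}\binom{k_i-1}{k_i/2}$ balanced branches, with positivity obtained from the identical witness (all scaled increments $|u_{i_j}|<\epsilon$ with $Cb\epsilon<1/2$ and $v_0$ in the middle of $[0,1]$). The argument is correct and no gaps remain; if anything, your explicit observation that an odd block size makes the signed count $c_j$ odd, hence nonzero on \emph{every} sign branch, states more cleanly what the paper leaves implicit in its Step~2.
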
	

\begin{proof}
 Let
\begin{align*} 
s_i= \pi(i)- \pi(i-1)  \  \  \text{for }1 \leq i \leq 2k.
\end{align*}
Clearly, $\boldsymbol {\omega}[i]=\boldsymbol {\omega}[j]$ 
if and only if
$|s_i| =  |s_j|$, that is, $s_i - s_j  =  0$ or   $s_i  +  s_j  =  0.$

Now we fix an $\boldsymbol {\omega}$ with $b$ distinct letters. Suppose $i_1,i_2,\ldots,i_b$ are the positions where new letters made their first appearances. Clearly $s_{i_1}=s_1$. If the first letter appears in the $j$-th position, then $s_j=s_1$ or $s_j=-s_1$. Similarly,  for every $i,\ 1 \leq i \leq 2k$,
\begin{equation}
s_i = s_{i_j} \mbox{ or }	s_i =  - s_{i_j}\label{tplus_minus}
\end{equation} 
for some $j \in \{1,2,\ldots,b\}$. 	  
We split the proof into a few steps.
	
\vskip3pt	
\noindent \textbf{Step 1}.	If $\pi(i_j), 0 \leq j \leq b$ can be chosen freely, then $\{\pi(0),s_{i_j};1 \leq j \leq b\}$ do not satisfy any non-trivial liner equation. Conversely if $\{\pi(0),s_{i_j},1\leq j\leq b\}$ do not any satisfy non-trivial linear equation then  $\pi(i_j),0\leq j\leq b$ can be chosen freely.
This follows from the same argument as in Steps 1 and 2 of the proof of Lemma \ref{lem:sym}.

\vskip3pt

\noindent \textbf{Step 2}. Suppose $\pi(0)$ and $s_{i_j},1\leq j\leq b$ can be chosen freely. Then the word is even.	
	 
To see this, observe that the circuit condition gives
\begin{equation}\label{t2}
 \sum_{i=1}^{k} s_i=\pi(0) - \pi(k)=0.
\end{equation}
Therefore, using \eqref{tplus_minus}, we see that there exists $\alpha_j \in \mathbb{Z}$ for all $1 \leq j \leq b$ such that 
\begin{equation*}
\alpha_1s_{i_1}+ \alpha_2 s_{i_2} + \cdots + \alpha_b s_{i_b} = 0.
\end{equation*}
Since we need the $s_{i_j}$'s to have free choice, we must have $\alpha_j=0$ for all $j \in \{1,2,\ldots,b\}$.	 Therefore for each $j$, 
 \begin{align*}
\big|\{l:s_l=s_{i_j}\}\big|=\big|\{l:s_l=-s_{i_j}\}\big|.
 \end{align*} 
  So each letter appears an even number of times and the word is even. Therefore 
	$$\displaystyle \lim_{n \rightarrow \infty}\frac{1}{n^{b+1}} | \Pi(\boldsymbol {\omega})|=0\ \ \text{if}\ \boldsymbol {\omega}\ \text{is not an even word}.$$   
\noindent \textbf{Step 3}. Suppose $\boldsymbol {\omega}$ is an even word with $b$ distinct letters. 
 First we fix the generating vertices $\pi(i_j),\ 0\leq j\leq b$.  Consider $\pi(i)$. By \eqref{tplus_minus}, we have that
 \begin{align}\label{relation}
 \pi(i)  &=  \pm(\pi(i_j)-\pi(i_j-1))+\pi(i-1)=\pm s_{i_j}+ \pi(i-1) \ \ \text{for some }j.
  \end{align}
  Let 
  $$
  v_i=  \frac{\pi(i)}{n}   \text{ for }0 \le i \leq 2k \ \mbox{ and }\  
   u_j= \frac{s_{j}}{n} \text{ for }1 \leq j \leq 2k.
  $$
 Clearly, $\pi(i)= \pi(i-1) \pm s_{i_j}$ whenever $i$-th letter in $\boldsymbol{\omega}$ is same as the $j$-th distinct letter that appeared first at the $i_j$-th position. Therefore $v_i= v_{i-1} \pm u_{i_j}$. Also observe that $v_1= \ v_0 + u_{i_1}(=u_1)$.
  
Let $$S=\{\pi(i_j):0 \leq  j \leq b\}\ \text{ and }\ S^{\prime}=\{i:\ \pi(i) \notin S\}.$$ 
That is, $S$ is the set of all distinct generating vertices and $S^{\prime}$ is the set of all indices of the non-generating vertices.
We have the following claim.
\vskip3pt  
\noindent \textbf{Claim}: For any $1 \leq i \leq 2k$, $v_i= v_0+  \sum_{j=1}^i \alpha_{ij}u_{i_j}$ where  $\alpha_{ij}$ depends on the choice of sign in  \eqref{relation}.
\vskip3pt
\noindent We prove this by induction. We know that $\pi(1) \in S$. Clearly, $v_1=s_1 +v_0$. Now either $\pi(2) \in S$ or $2 \in S^{\prime}$. If $\pi(2)\in S$, then $v_2=s_2-v_1$ and $v_1=u_1-v_0$. Therefore $v_2= u_2-s_1+u_0$. If  $2\in S^{\prime}$, then
  $u_2= \pm u_1$  and $v_2= v_1 \pm u_1$.
  So either $v_2= u_1 + v_0 + u_1$ or $ v_2= u_1 + v_0 - u_1=v_0$.
Hence the claim is true for $i=2$.

Now we assume that the claim is true for all $j<i$ and try to prove it for $i$.
Then either $\pi(i) \in S$ or $i \in S^{\prime}$. If $\pi(i) \in S$, then 
  \begin{align*}
  v_i&= u_i+v_{i-1}\\
 &= u_i+ v_0+  \sum_{j=1}^{i-1} \alpha_{(i-1)j}u_{i_j}  \   \   \text{(by induction hypothesis)}\\
 &=  v_0 +  \sum_{j=1}^{i} \alpha_{ij}u_{i_j}, 
  \end{align*}
where $\alpha_{ii}=1$.  If $i \in S^{\prime}$, then there exists $j$ such that $i_j<i$ and $u_i=\pm u_{i_j}$. Then either  $ v_i= v_{i-1}+ u_{i_j}$, or     $v_i= v_{i-1}- u_{i_j}.$
Hence either $$ v_i=v_0+ \sum_{j=1}^{i-1} \alpha_{(i-1)j}u_{i_j}  + u_{i_j}, \ \mbox{ or }\  v_i=v_0+  \sum_{j=1}^{i-1} \alpha_{(i-1)j}u_{i_j} - u_{i_j}.$$
  Therefore $v_i=v_0 +  \sum_{j=1}^{i} \alpha_{ij}u_{i_j}$  where  $\alpha_{ij}=\alpha_{(i-1)j}+1$ or $\alpha_{(i-1)j}-1$ (depending on the sign of the equation).  Thus the claim is proved.

Therefore for $1\leq i \leq 2k$, we have 
  \begin{equation*}
  v_i= v_0+ L_{i,u}^T(u_{S}),
  \end{equation*}
  where $L_{i,u}^T(u_{S})$ denotes a linear combination of $\{u_i:\pi(i)\in S\}$.

Also, for $1\leq i \leq 2k$, we have 
  \begin{equation*}
  v_i= v_0+ L_i^T(v_{S}),
  \end{equation*}
  where $L_i^T(v_{S})$ denotes a linear combination of $\{v_i:\pi(i)\in S\}$ arising from \eqref{tplus_minus}. 
 Now, the linear combinations vary depending on the sign chosen for each $s_i$. As we know for each block of an even word, the number of positive and negative signs in the relations among the $s_i$'s (i.e., the equation like \eqref{tplus_minus}) are same. Therefore there are $\displaystyle \prod_{i=1}^b{ {k_i-1} \choose {\frac{k_i}{2}}}$ different sets of linear combinations corresponding to each word $\boldsymbol{\omega}$, where $k_1,\ldots,k_b$ are the block sizes of $\boldsymbol {\omega}$. 
  
Let $U_n=\{0,1/n,\ldots,(n-1)/n\}$. Then from \eqref{Pi(omega)}, it is easy to see that for a word $\boldsymbol{\omega}$ of length $2k$, 
\begin{align*}
\big|\Pi(\boldsymbol{\omega})\big|= \big|\big\{& (v_0,v_1,\ldots,v_{2k}): v_i\in U_n \text{ for } 0\leq i\leq 2k, v_0=v_{2k},\\
& L(v_{i-1},v_i)=L(v_{j-1},v_j) \text{ whenever } \boldsymbol{\omega}[i]=\boldsymbol{\omega}[j]\big \}\big|.
 \end{align*}
Hence
\begin{equation}\label{integral-gen}
  \displaystyle \lim_{n \rightarrow \infty}\frac{1}{n^{b+1}} |\Pi(\boldsymbol{\omega})|= \sum_{L_{\boldsymbol{\omega}}^T}\int_{0}^{1} \int_{0}^{1}\int_{0}^{1} \cdots  \int_{0}^{1} \textbf{1}(0 \leq v_0+ L_i^T(v_S) \leq 1,\ \forall \  i \in S^{\prime})\ dv_S,
\end{equation}
where $dv_S= \displaystyle \prod_{j=0}^{b}dv_{i_j}$  denotes the $(b+1)$-dimensional Lebesgue measure, $v_{i_0}=v_0$ and $\sum_{L_{\boldsymbol{\omega}}^T}$ is the sum over all the $\displaystyle \prod_{i=1}^b{ {k_i-1} \choose {\frac{k_i}{2}}}$ such different sets of linear combinations corresponding to $\boldsymbol{\omega}$.

As observed in Step 1, choosing $v_{i_j}, 0 \leq j \leq b$ freely is equivalent to choosing  $v_0$ and $u_{i_j},1 \leq j \leq b$ freely. So
  \begin{equation}\label{integral-exp}
  \displaystyle \lim_{n \rightarrow \infty}\frac{1}{n^{b+1}} |\Pi(\boldsymbol{\omega})|= \sum_{L_{\boldsymbol{\omega}}^T}\int_{0}^{1} \int_{-1}^{1}\int_{-1}^{1} \cdots  \int_{-1}^{1} \textbf{1}(-1 \leq v_0+ L_{i,u}^T(u_S) \leq 1,\ \forall \  i \in S^{\prime})\ du_S,
\end{equation}    
where $du_S= \displaystyle \prod_{j=0}^{b}u_{i_j}$  denotes the $(b+1)$-dimensional Lebesgue measure on $[0,1]\times [-1,1]^{b}$ and $u_{i_0}=v_0$.

  Suppose, a particular set of linear combinations $L_{i,u}^T$ is given,
  i.e., for $i \in S^{\prime}$, $v_i=v_0 + \displaystyle \sum_{m=1}^{i} \alpha_{im}u_{i_m}$ and the values of $\alpha_{im},1 \leq i \leq 2k, 1\leq j \leq b$ are known.
 Then we choose $$C= \max \{|\alpha_{ij}|: 1 \leq j \leq b \text{ and } i \in S^{\prime} \}.$$
 Next we choose $\epsilon$ such that  $Cb\epsilon  <  1/2$. 
 Now, let $|u_{i_j}|< \epsilon$ for $1 \leq j \leq b$ and $Cb\epsilon \leq v_0 \leq 1-Cb\epsilon$. 
 Then, for all $i \in S^{\prime}$, $0 \leq v_0+ L_{i,u}^T (u_S) \leq 1$.
  Also the circuit condition is automatically satisfied as we have \eqref{t2}. Note that  we cannot choose the $s_{i_j}$'s freely in case of words that are not even as observed in Step 2.
  
 Thus
$$\displaystyle \lim_{n \rightarrow \infty}\frac{1}{n^{b+1}} |\Pi(\boldsymbol {\omega})|=\alpha(\boldsymbol{\omega})> 0 \ \ \text{for any even word}\ \boldsymbol{\omega},$$ 
where $\alpha(\boldsymbol{\omega})$ is the sum of the of the integrals defined in \eqref{integral-exp}.

 This completes the proof of the lemma.\end{proof}

\subsection{$\lim_{n\to\infty}\frac{| \Pi(\boldsymbol {\omega})|}{n^{b+1}}$ for Hankel Matrix}	

\begin{lemma}\label{lem:han}
Suppose $\boldsymbol {\omega}$ is a word with $b$ distinct letters. Then $\displaystyle \lim_{n \rightarrow \infty}\frac{1}{n^{b+1}} | \Pi(\boldsymbol {\omega})|= \alpha(\boldsymbol{\omega})>0$ if and only if $\boldsymbol {\omega}$ is a symmetric word. Moreover, for every symmetric word $\boldsymbol{\omega}$, $0< \alpha(\boldsymbol{\omega}) \leq 1$.
\end{lemma}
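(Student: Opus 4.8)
The plan is to mirror the proof of Lemma \ref{lem:rev} for the necessity of symmetry and the proof of Lemma \ref{lem:toe} (Step 3) for the integral representation, exploiting that the Hankel link function $L(i,j)=i+j$ produces exactly the same matching variable as the reverse circulant, namely $t_i=\pi(i)+\pi(i-1)$, but \emph{without} reduction modulo $n$. Concretely, since $i+j\in\{2,\dots,2n\}$ never wraps around, the matching condition $\boldsymbol{\omega}[i]=\boldsymbol{\omega}[j]$ is equivalent to the exact equality $t_i=t_j$ rather than to $t_i-t_j\in\{0,n,-n\}$. Fixing a word $\boldsymbol{\omega}$ with $b$ distinct letters first appearing at positions $i_1<\dots<i_b$, every $t_i$ equals $t_{i_j}$ for the letter $j$ occupying position $i$.

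First I would establish that a word contributes only if it is symmetric. Steps 1 and 2 of Lemma \ref{lem:rev} transfer essentially verbatim, since they concern linear relations among $\{\pi(0),t_{i_j}\}$ and among the generating vertices $\{\pi(i_j)\}$ and never use the modulus: free choice of the generating vertices is equivalent to the absence of a non-trivial linear relation among $\pi(0),t_{i_1},\dots,t_{i_b}$. For the circuit identity, the telescoping alternating sum gives, for a word of even length $2k$, $\sum_{i\text{ odd}}t_i-\sum_{i\text{ even}}t_i=\pi(0)-\pi(2k)=0$; substituting $t_i=t_{i_j}$ turns this into $\sum_{j=1}^b(o_j-e_j)\,t_{i_j}=0$, where $o_j$ and $e_j$ count the odd and even positions carrying the $j$-th letter. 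Freeness of the $t_{i_j}$ forces $o_j=e_j$ for every $j$, i.e.\ each letter occupies equally many odd and even slots, which is precisely symmetry; odd-length words are excluded exactly as in Step 3 of Lemma \ref{lem:rev}. Hence a non-symmetric word leaves at least one generating vertex with only finitely many admissible values, so its contribution vanishes.

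For a symmetric word I would then obtain the integral formula and the two-sided bound. Because there is no modulus, the relation $t_i=t_{i_j}$ determines each non-generating vertex \emph{uniquely} as $\pi(i)=\pi(i_j-1)+\pi(i_j)-\pi(i-1)$, so, writing $v_i=\pi(i)/n$, each $v_i$ is a fixed linear form $L_i(v_S)$ in the generating coordinates $v_S=(v_0,v_{i_1},\dots,v_{i_b})$. Exactly as in \eqref{integral-gen}, the count $|\Pi(\boldsymbol{\omega})|/n^{b+1}$ is a Riemann sum converging to $\alpha(\boldsymbol{\omega})=\int_{[0,1]^{b+1}}\mathbf{1}\big(0\le L_i(v_S)\le 1\ \forall\, i\in S'\big)\,dv_S$, the matching ``only if'' failing only on finitely many hyperplanes of measure zero. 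Since the domain of integration is a subset of the unit cube $[0,1]^{b+1}$ and the integrand is an indicator, $\alpha(\boldsymbol{\omega})\le 1$ is immediate.

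The one point requiring genuine work is positivity, $\alpha(\boldsymbol{\omega})>0$. Here I would exhibit an interior point of the admissible region: the constant assignment $v_i\equiv 1/2$ solves the recursion $v_i=v_{i_j-1}+v_{i_j}-v_{i-1}$ (indeed $\tfrac12+\tfrac12-\tfrac12=\tfrac12$), so $L_i(1/2,\dots,1/2)=1/2$ for every $i\in S'$. By continuity of the finitely many linear forms $L_i$, there is an open neighbourhood of $(1/2,\dots,1/2)$ in $[0,1]^{b+1}$ on which all constraints $0<L_i(v_S)<1$ hold strictly; deleting the measure-zero set on which $t_i=t_{i'}$ for some pair with $\boldsymbol{\omega}[i]\ne\boldsymbol{\omega}[i']$ still leaves a set of positive Lebesgue measure inside the admissible region. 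Therefore $\alpha(\boldsymbol{\omega})>0$, and combined with the upper bound this yields $0<\alpha(\boldsymbol{\omega})\le 1$ for every symmetric word. The main obstacle is precisely this positivity step, together with the bookkeeping that the \emph{symmetric} (rather than merely even) class is the one forced by the odd/even position count in the circuit identity; the remaining necessity arguments are inherited almost unchanged from Lemma \ref{lem:rev}.
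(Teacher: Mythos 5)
Your proposal is correct, and for the necessity of symmetry and the Riemann-sum/integral representation it follows the paper's own route essentially verbatim (the paper likewise reduces Steps 1--2 to the corresponding steps of Lemma \ref{lem:rev} and writes $\lim n^{-(b+1)}|\Pi(\boldsymbol{\omega})|$ as the Lebesgue measure of the region cut out by the linear forms $L_i^H$, whence $\alpha(\boldsymbol{\omega})\le 1$). Where you genuinely diverge is the positivity step. The paper first shifts to $y_i=v_i-\tfrac12$, proves by induction a representation $y_i=\pm y_0+\sum_j\alpha_{ij}p_{i_j}$ with $p_i=y_{i-1}+y_i$, performs the sign change $(y_0,y_1,y_2,\dots)\mapsto(y_0,-y_1,y_2,\dots)$ to pass to the variables $q_i=y_{i-1}-y_i$, and then runs an $\epsilon$-argument ($|q_{i_j}|<\epsilon$ with $Cb\epsilon<1/4$) to exhibit a positive-measure admissible set. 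You instead observe that the constant assignment $v_i\equiv\tfrac12$ is a fixed point of the recursion $v_i=v_{i_j}+v_{i_j-1}-v_{i-1}$, so all constraints hold strictly at $(\tfrac12,\dots,\tfrac12)$ and an open neighbourhood works by continuity; you then correctly note that the ``only if'' clause (distinct letters must give distinct $t$-values, which all collapse to $1$ at the centre point) only removes finitely many proper hyperplanes, proper because the $t_{i_j}$ are linearly independent by your Step 1. Your argument buys a shorter and more transparent positivity proof with no change of variables or induction on the coefficients $\alpha_{ij}$; the paper's approach buys an explicit parametrisation of the admissible region in the $q$-coordinates, which it reuses when computing the limiting moments in Theorem \ref{thm:maintoe}. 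Both are sound.
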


\begin{proof}
Let 
\begin{align*}
t_i= \pi(i)+ \pi(i-1)  \text{ for } 1 \leq i \leq 2k.
\end{align*}
 Clearly, $\boldsymbol {\omega}[i]=\boldsymbol {\omega}[j]$ 
 if and only if $\pi(i-1) +  \pi(i)  =  \pi(j-1) +  \pi(j)$, that is, $t_i -  t_j = 0$.

Now we fix an $\boldsymbol {\omega}$ with $b$ distinct letters. Suppose $i_1,i_2,\ldots,i_b$ are the positions where new letters made their first appearances. 
Clearly $t_{i_1}=t_1$. If the first letter again appears at $j$-th position, then $t_j = t_1$. Similarly,  for every $i,\ 1 \leq i \leq 2k$, 
\begin{equation}\label{han1}
t_i = t_{i_j}\ \ \text{ for some }j \in \{1,2,\ldots,b\}.
\end{equation}

\noindent \textbf{Step 1}. If $\pi(i_j),0 \leq j \leq b$ are chosen freely, then $\{\pi(0),t_{i_j};1\leq j\leq b\}$ do not satisfy any non-trivial linear relation. Conversely if $\pi(0),t_{i_j},1\leq j\leq b$ satisfy non-trivial linear relation then $\pi(i_j),0\leq j\leq b$ can be chosen freely. This follows from the same argument as in Steps 1 and 2 of the proof of Lemma \ref{lem:rev}.

\vskip3pt
\noindent \textbf{Step 2}. Suppose $\pi(0)$ and $t_{i_j}, 1 \leq j \leq b$, can be chosen freely. Then the word is symmetric.

To see this,  observe that if the length of the word is $2k$, then
\begin{equation*}
  (t_1+t_3+\cdots+t_{2k-1}) -  (t_2+t_4+\cdots+t_{2k})= \pi(0)\ - \pi(2k)=0.
 \end{equation*}
 Hence we have 
 \begin{equation}\label{han2}
(t_1+t_3+\cdots+t_{2k-1}) -  (t_2+t_4+\cdots+t_{2k})= 0.
\end{equation}
Therefore, from \eqref{han1}, we see that there exists $\alpha_j \in \mathbb{Z}$ for all $1 \leq j \leq b$ such that \eqref{han2} can be written as 
\begin{equation*}
\alpha_1t_{i_1}+ \alpha_2 t_{i_2} + \cdots + \alpha_b t_{i_b} =0.
		 \end{equation*}
	Now for $t_{i_j}$'s to have free choice, we must have $\alpha_j=0$ for all $j \in \{1,2,\ldots,b\}$.
		 Thus, for each $j$,
\begin{align*}
\big|\big\{l:l \text{ odd and }t_l=t_{i_j} \big\}\big|=\big|\big\{l:l \text{ even and }t_l=t_{i_j}\big\}\big|.
\end{align*} 
That is, each letter appears equal number of times at odd and even places. Hence the word is symmetric. 

 Now if the length of the word is odd, say $2k+1$, then $$\pi(2k+1)-\pi(0)=(t_1+t_2+\cdots+t_{2k+1})-(t_2+t_4+\cdots+t_{2k})-2\pi(0)=0.$$ Hence substituting $t_i$ by $t_{i_j}$'s using \eqref{rc1} in the above equation, we see that $\pi(0),t_{i_1},\ldots,t_{i_b}$ satisfies a non-trivial linear relation. Hence the length of the word cannot be odd.

Therefore if $\boldsymbol {\omega}$ is not symmetric, at least one of the generating vertices cannot be chosen freely and hence
$$\displaystyle \lim_{n \rightarrow \infty}\frac{1}{n^{b+1}} | \Pi(\boldsymbol {\omega})|=0\ \ \text{if}\ \boldsymbol {\omega}\  \text{is not symmetric}.$$
\noindent \textbf{Step 3}. We now show that, if $\boldsymbol {\omega}$ is a symmetric word with $b$ distinct letters, then \\ $\displaystyle \lim_{n \rightarrow \infty}\frac{1}{n^{b+1}} | \Pi(\boldsymbol {\omega})|=\alpha(\boldsymbol {\omega})>0$.
 
 First we fix the generating vertices $\pi(i_j),\ j=0,1,2,\ldots,b$.
 Let 
  $$
  v_i=  \frac{\pi(i)}{n}  \text{ for }0 \le i \leq 2k,\ \ S=\{\pi(i_j):0 \leq  j \leq b\}  \ \mbox{ and }\ S^{\prime}=\{i : \pi(i) \notin S\}.$$
For $1\leq i \leq 2k$, from the link function and the formula for $t_i$ we have 
  \begin{equation}\label{linear-han}
  v_i= L_{i}^H(v_{S}),
  \end{equation}
  where $L_{i}^H(v_{S})$ denotes a linear combination of $\{v_i:\pi(i)\in S\}$.  
  
 Let $U_n=\{0,1/n,\ldots,(n-1)/n\}$. From \eqref{Pi(omega)}, it is easy to see that for a word $\boldsymbol{\omega}$ of length $2k$, 
\begin{align*}
\big|\Pi(\boldsymbol{\omega})\big|= \big|\big\{ (v_0,v_1,\ldots,v_{2k}): v_i\in U_n \text{ for } 0\leq i\leq 2k, v_0=v_{2k},
v_i= L_i^H(v_S)\big \}\big|.
 \end{align*}
Transforming $v_i \mapsto y_i= v_i -\frac{1}{2}$, we get that 
\begin{align*}\big|\Pi(\boldsymbol{\omega})\big|= \big|\big\{ (y_0,y_1,\ldots,y_{2k}): & \ y_i\in \{-1/2,-1/2+1/n, \ldots,-1/2+(n-1)/n\} \text{ for } 0\leq i\leq 2k,\\
&\quad y_0=y_{2k} \mbox{ and }
y_i= L_i^H(y_S)\big \}\big|.\end{align*}  
%
Hence
 \begin{equation}\label{han4}
  \displaystyle \lim_{n \rightarrow \infty}\frac{1}{n^{b+1}}\Pi(\boldsymbol{\omega})= \int_{-1/2}^{1/2} \int_{-1/2}^{1/2}\int_{-1/2}^{1/2} \cdots  \int_{-1/2}^{1/2} \textbf{1}(-1/2 \leq L_i^H(y_S) \leq 1/2,\ \forall\ i \in S^{\prime})\ dy_S,
\end{equation}
where $dy_S= \displaystyle \prod_{j=0}^{b}dy_{i_j}$  denotes the $(b+1)$-dimensional Lebesgue measure on $[-\frac{1}{2},\frac{1}{2}]^{b+1}$.

Let 
$p_i=y_{i-1}+y_i$ and $q_i=y_{i-1}-y_i$. Now we have the following claim.

\vskip3pt  
\noindent \textbf{Claim}: For any $1\leq i \leq 2k$,
$$y_i= \begin{cases}
y_0+  \sum_{j=1}^{i} \alpha_{ij}p_{i_j} & \text{ if } \ i \ \mbox{ is  even},\\
 -y_0+  \sum_{j=1}^{i} \alpha_{ij}p_{i_j} & \text{ if }\  i \  \text{ is odd}.
\end{cases} $$
\vskip3pt
\noindent We prove this by induction. 
We know that $\pi(1) \in S$. Clearly, $y_1=p_1 -y_0$. Now either $\pi(2) \in S$ or $2 \in S^{\prime}$. If $\pi(2)\in S$, then $y_2=p_2-y_1$. Therefore $y_2= p_2-p_1+y_0$. If  $2\in S^{\prime}$, then
  $p_2=  p_1$  and $y_2= p_1 -y_1=y_0$.
So the claim is true for $i=2$.
  
  Now we assume that the claim is true for all $j<i$ and try to prove it for $i$.
Then either $\pi(i) \in S$ or $i \in S^{\prime}$. 

If $\pi(i) \in S$, then $y_i= p_i-y_{i-1}$. If $i$ is even, then $i-1$ is odd and hence $y_{i-1}= -y_0+ \displaystyle \sum_{j=1}^{i-1}\alpha_{(i-1)j}p_{i_j}$ by induction hypothesis. Therefore $y_i= y_0+ \displaystyle \sum_{j=1}^{i}\alpha_{ij}p_{i_j}$ where $\alpha_{ii}=1$. The case where $i$ is odd can be tackled similarly.

   If $i \in S^{\prime}$, then there exists $m$ such that $i_m<i$ and $p_i=p_{i_m}$. Then $y_i=p_{i_m}-y_{i-1}$. Now if $i$ is even, then $i-1$ is odd and $y_{i-1}= -y_0+ \displaystyle \sum_{j=1}^{i-1}\alpha_{(i-1)j}p_{i_j}$ by induction hypothesis. Therefore $y_i= y_0+ \displaystyle \sum_{j=1}^{i-1}\alpha_{ij}p_{i_j}$ where $\alpha_{im}=\alpha_{(i-1)m}+1$. The case where $i$ is odd can be tackled similarly.
   
Thus the claim is proved.
\vskip3pt
%
%
Now we perform the following change of variables in \eqref{han4}:
\begin{align*}
(y_0,y_1,y_2,y_3,\ldots,y_{2k}) \longrightarrow (y_0,-y_1,y_2,-y_3,\ldots,y_{2k})=(z_0,z_1,z_2,z_3,\ldots,z_{2k}) \ (\text{ say }) .
\end{align*}
Observe that this transformation does not alter the value of the integral in \eqref{han4}. Also under this transformation, 
\begin{align*}
(p_1,p_2,p_3,\ldots,p_{2k}) \longrightarrow (q_1,-q_2,q_3,\ldots,-q_{2k}).
\end{align*}
%
 Then from the claim it follows that $$z_i= z_0+\displaystyle \sum_{j=1}^{i}\beta_{ij}q_{i_j},$$ where $\beta_{ij}=\pm \alpha_{ij}$ according as $i_j$ is odd or even.  We shall use the notation $z_i=L_{i,q}^H(z_S)$ to denote this linear relation.

Also note that from Step 1 choosing $y_{i_j}, 0 \leq j \leq b$ freely is equivalent to choosing $p_{i_j}, 0 \leq j \leq b $ (where $p_{i_0}=y_0$). Therefore we can write \eqref{han4} as 
\begin{align*}
 \displaystyle \lim_{n \rightarrow \infty}\frac{1}{n^{b+1}}\Pi(\boldsymbol{\omega})= \int_{-1/2}^{1/2} \int_{-1}^{1}\int_{-1}^{1} \cdots  \int_{-1}^{1} \textbf{1}(-1/2 \leq L_i^H(z_S) \leq 1/2,\ \forall\ i \in S^{\prime})\ dq_S,
\end{align*}
where $dq_S= \displaystyle \prod_{j=0}^{b}dq_{i_j}$  denotes the $(b+1)$-dimensional Lebesgue measure on $[-\frac{1}{2},\frac{1}{2}]\times [-1,1]^{b}$.

Let $$C= \max \{|\alpha_{ij}|: 1 \leq j \leq b \text{ and } i \in S^{\prime} \}.$$
 Next we choose $\epsilon$ such that  $Cb\epsilon  <  1/4$. 
 Now, let $|q_{i_j}|< \epsilon$ for $1 \leq j \leq b$
 and $Cb\epsilon -\frac{1}{2}\leq z_0 \leq \frac{1}{2}-Cb\epsilon$.
 Then, for all $i \in S^{\prime}$, $-\frac{1}{2} \leq L_{i,q}^T (z_S) \leq \frac{1}{2}$.
  Also the circuit condition is automatically satisfied.

 Hence 
$$\lim_{n \rightarrow \infty}\frac{1}{n^{b+1}} | \Pi(\boldsymbol {\omega})|= \alpha(\boldsymbol{\omega})>0,$$ where $\alpha(\boldsymbol{\omega})$ is the value of the integral in \eqref{han4}. Also as $\alpha(\boldsymbol{\omega})$ actually gives Lebesgue measure of a set in $[-1/2,1/2]^{b+1}$ as seen in \eqref{han4}, we have $\alpha(\boldsymbol{\omega}) \leq 1$.

This completes the proof of the lemma.
\end{proof}

\section{Proofs of the theorems}\label{proofs of the theorems}
We will use moment method to prove our theorems. But first, we recall the $d_2$ metric which will helps us to deal with situations where entries may not have zero means and/or entries which may require truncation to ensure finiteness of all moments.

Let $F$ and $G$ be two distributions with finite second moment. Then the $d_2$ distance between them is defined as
\begin{align*}
d_2(F,G)= \Big[ \underset{(X \sim  F, Y \sim G)}{\inf} \mathbb{E}[X-Y]^2\Big]^{\frac{1}{2}},
\end{align*}
where $(X\sim F, Y \sim G)$ denotes that the joint distribution of $(X,Y)$ is such that the marginal distributions are $F$ and $G$ respectively.

It is well-known that if $d_2(F_n,F) \rightarrow 0$ as $n \rightarrow \infty$, then $F_n \xrightarrow {\mathcal{D}} F$.
For a proof of this result, see Lemma 1.3.1 in \citep{bose2018patterned}.

The following lemma will be useful to apply the above result on the EESD of the matrices.
\begin{lemma}\label{lem:d2}
Let $\{F_i\}_{1\leq i\leq n}$ and $\{G_i\}_{1\leq i\leq n}$ be two sequence of distributions with finite second moment. Suppose $\{X_i\}_{1\leq i\leq n}$ and $\{Y_i\}_{1\leq i\leq n}$ are sequences of random variables such that $X_i\sim F_i$ and $Y_i\sim G_i$. Then 
\begin{equation}
d_2^2(\frac{1}{n} \displaystyle \sum_{i=1}^nF_i,\frac{1}{n} \displaystyle \sum_{i=1}^nG_i) \leq \displaystyle \frac{1}{n}\sum_{i=1}^n \mathbb{E}[(X_i-Y_i)^2].
\end{equation}
\end{lemma}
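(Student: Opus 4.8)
The plan is to exhibit a single explicit coupling of the two mixture distributions whose cost equals the right-hand side, and then invoke the fact that $d_2$ is an infimum over all couplings.

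First I would recognise $\frac{1}{n}\sum_{i=1}^n F_i$ as the law of a two-stage random variable. Introduce an index $J$ distributed uniformly on $\{1,\dots,n\}$, taken independent of the given pairs $(X_i,Y_i)$, and set $X=X_J$ and $Y=Y_J$. For any Borel set $A$ we then have $\prob(X\in A)=\frac{1}{n}\sum_{i=1}^n \prob(X_i\in A)$, so that $X\sim \frac{1}{n}\sum_{i=1}^n F_i$, and likewise $Y\sim \frac{1}{n}\sum_{i=1}^n G_i$. The essential point is that the \emph{same} index $J$ drives both coordinates, and within the block $\{J=i\}$ we reuse the prescribed joint law of $(X_i,Y_i)$; hence $(X,Y)$ is a bona fide coupling of the two mixtures, i.e.\ an admissible competitor in the infimum defining $d_2$.

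Next I would compute the cost of this coupling by conditioning on $J$:
\begin{align*}
\E[(X-Y)^2] = \E\big[\,\E[(X_J-Y_J)^2\mid J]\,\big] = \frac{1}{n}\sum_{i=1}^n \E[(X_i-Y_i)^2].
\end{align*}
Since $d_2^2(\cdot,\cdot)$ is the infimum of $\E[(X'-Y')^2]$ over all joint laws $(X',Y')$ with the prescribed marginals, and our $(X,Y)$ is one such law, we obtain
$$d_2^2\Big(\tfrac{1}{n}\sum_{i=1}^n F_i,\ \tfrac{1}{n}\sum_{i=1}^n G_i\Big) \le \E[(X-Y)^2] = \frac{1}{n}\sum_{i=1}^n \E[(X_i-Y_i)^2],$$
which is exactly the claimed inequality.

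There is no substantial obstacle in this argument; the only points requiring a line of care are verifying that the mixture coupling has the correct marginals (immediate from independence of $J$) and noting that the finiteness of the second moments of each $F_i$ and $G_i$ makes the right-hand side finite, so that $d_2$ is well defined on the two mixtures and the infimum is taken over a nonempty family with a finite competitor.
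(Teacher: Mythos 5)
Your proof is correct and is essentially the paper's own argument: the paper's random canonical vector $(Z_1,\ldots,Z_n)=e_k$ with probability $1/n$ is exactly your uniform index $J$, and $\sum_i Z_iX_i=X_J$, so the coupling, the conditioning computation, and the appeal to the infimum defining $d_2$ all coincide.
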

\begin{proof}
Let $e_k$ denote the $k-$th canonical vector in $\mathbb{R}^{n}$, i.e, the vector whose $k-$th coordinate is 1 and all other coordinates are zero. Let $Z_1,Z_2,\ldots,Z_{n}$ be random variables independent of $X_i,Y_i, 1 \leq i \leq n$ such that $(Z_1,Z_2,\ldots,Z_{n})=e_k, $ with probability $1/n$ for each $1\leq k \leq n$.

Let
\begin{align}\label{X-Y}
X  = \displaystyle \sum_{i=1}^{n} Z_iX_i \ \ \text{ and }
Y = \displaystyle \sum_{i=1}^{n} Z_iY_i 
\end{align}
Then $X \sim \frac{1}{n} \displaystyle \sum_{i=1}^nF_i$ and $Y \sim \frac{1}{n} \displaystyle \sum_{i=1}^nG_i$. Thus from the definition of $d_2$,
\begin{equation}\label{d2}
d_2^2(\frac{1}{n} \displaystyle \sum_{i=1}^nF_i,\frac{1}{n} \displaystyle \sum_{i=1}^nG_i) \leq \mathbb{E}[(X-Y)^2].
\end{equation}
Now by \eqref{X-Y}, it is clear that $$\mathbb{E}[(X-Y)^2]= \frac{1}{n}\sum_{i=1}^n \mathbb{E}[(X_i-Y_i)^2].$$
Hence from \eqref{d2} the result follows.
\end{proof}
Now, we prove a lemma that helps us determine the closeness of the EESDs of to random matrices.  
\begin{lemma}\label{lem:metric}
Suppose $A$ and $B$ are two $n \times n$ symmetric matrices and $\mu_A$ and $\mu_B$ are their ESDs. Then
\begin{equation}
d_2^2(\mathbb{E} \mu_A,\mathbb{E} \mu_B) \leq \frac{1}{n}\mathbb{E}[\Tr(A-B)^2].
\end{equation}
\end{lemma}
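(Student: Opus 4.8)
The plan is to reduce the statement to a classical inequality for Hermitian matrices combined with the abstract $d_2$ bound just established in Lemma \ref{lem:d2}. The key observation is that the eigenvalues of a symmetric matrix can be viewed as atoms of its ESD, so $\mathbb{E}\mu_A = \frac{1}{n}\sum_{i=1}^n F_i$ where $F_i$ is the distribution of the $i$-th eigenvalue of $A$ (as a random measure); similarly $\mathbb{E}\mu_B = \frac{1}{n}\sum_{i=1}^n G_i$. However, the cleaner route is to invoke a coupling directly on the eigenvalues. Specifically, I would first diagonalize and order the eigenvalues of $A$ and of $B$, writing $\lambda_1(A)\le\cdots\le\lambda_n(A)$ and $\lambda_1(B)\le\cdots\le\lambda_n(B)$, and use the well-known Hoffman--Wielandt inequality, which states that for symmetric matrices
\begin{equation*}
\sum_{i=1}^n \big(\lambda_i(A)-\lambda_i(B)\big)^2 \leq \Tr\big((A-B)^2\big).
\end{equation*}

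Given Hoffman--Wielandt, the main step is to build an explicit coupling realizing a good upper bound for $d_2$. I would define random variables $X$ and $Y$ on a common probability space by choosing a uniform index $K\in\{1,\ldots,n\}$ and setting $X=\lambda_K(A)$ and $Y=\lambda_K(B)$ with the eigenvalues matched in increasing order. Then $X\sim \mathbb{E}\mu_A$ and $Y\sim \mathbb{E}\mu_B$ by construction (after taking expectation over the randomness of the matrices, since the ordered eigenvalues are themselves random), so by the definition of $d_2$ we obtain
\begin{equation*}
d_2^2(\mathbb{E}\mu_A,\mathbb{E}\mu_B) \leq \mathbb{E}[(X-Y)^2] = \frac{1}{n}\,\mathbb{E}\Big[\sum_{i=1}^n \big(\lambda_i(A)-\lambda_i(B)\big)^2\Big].
\end{equation*}
Applying Hoffman--Wielandt inside the expectation then yields exactly $\frac{1}{n}\mathbb{E}[\Tr(A-B)^2]$, which is the desired bound. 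One subtlety I would flag is that the ESDs are random, so $\mathbb{E}\mu_A$ is the expectation of the random empirical measure; the coupling must be set up so that the pair $(X,Y)$ has the correct marginals after averaging over both the uniform index and the matrix randomness, which is handled by conditioning on the realization of $(A,B)$ and matching ordered eigenvalues within each realization.

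The step I expect to be the main obstacle is making the coupling rigorous in the presence of two layers of randomness (the uniform index and the matrix entries) and confirming that the matched-eigenvalue coupling is admissible in the infimum defining $d_2$. The inequality $d_2^2(\mathbb{E}\mu_A,\mathbb{E}\mu_B)\le \mathbb{E}[(X-Y)^2]$ requires only that $(X,Y)$ be \emph{some} joint law with the right marginals, so any valid coupling gives an upper bound; the ordered-eigenvalue coupling is the natural candidate and Hoffman--Wielandt guarantees it is near-optimal pointwise in the matrix realization. The Hoffman--Wielandt inequality itself I would cite rather than prove, since it is standard. Everything else is bookkeeping: verifying the marginals, interchanging the finite sum with expectation, and identifying $\sum_i \lambda_i(A)^2 = \Tr(A^2)$ so that the eigenvalue form of the bound matches the trace form in the statement.
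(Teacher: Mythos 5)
Your proposal is correct and takes essentially the same route as the paper: the uniform-index coupling of the ordered eigenvalues is exactly the content of Lemma \ref{lem:d2} (applied with $X_i=\lambda_i(A)$, $Y_i=\lambda_i(B)$), and the Hoffman--Wielandt inequality then converts the eigenvalue sum into $\mathbb{E}[\Tr(A-B)^2]$, just as in the paper's proof.
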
 
\begin{proof}
Let $\{\lambda_i(A)\}_{1 \leq i \leq n}$ and $\{\lambda_i(B)\}_{1 \leq i \leq n}$ denote the eigenvalues of $A$ and $B$, both written in descending order. Then from the definition of EESD and Lemma \ref{lem:d2},
\begin{align*}
d_2^2(\mathbb{E} \mu_A,\mathbb{E} \mu_B) \leq \frac{1}{n}\displaystyle \sum_{i=1}^n \mathbb{E}[(\lambda_i(A)-\lambda_i(B))^2]. 
\end{align*}
Now, using the \textit{Hoffman-Wielandt inequality} (\citep{10.1215/S0012-7094-53-02004-3}), we obtain
\begin{align*}
d_2^2(\mathbb{E} \mu_A,\mathbb{E} \mu_B) \leq \frac{1}{n}\displaystyle \sum_{i=1}^n \mathbb{E}[(\lambda_i(A)-\lambda_i(B))^2] \leq \frac{1}{n}\mathbb{E}[\Tr(A-B)^2]. 
\end{align*}
\end{proof}

 Now we prove our main theorems.
 
\subsection{Proof of Theorem \ref{thm:mainrev}} 
\begin{proof}
[\textbf{Proof of Theorem \ref{thm:mainrev}}]
We separate the proof of the theorem into a few steps.
\vskip3pt

\noindent \textbf{Step 1}. We reduce the general case to the case  where all the entries of $Z_n$ have mean 0.

To see this, consider the matrix $\widetilde{Z}_n$ whose entries are $(y_{i}-\mathbb{E}y_{i})$. Clearly the entries of $\widetilde{Z}_n$ have mean 0. Now
\begin{align}\label{meanzero-noiid}
n\ \mathbb{E}[(y_{i}- \mathbb{E} y_{i})^{2k}]= n\mathbb{E}[y_{i}^{2k}] + n  \sum_{j=0}^{2k-1} {{2k} \choose {j}}\mathbb{E}[y_{i}^{j}] (\mathbb{E}y_{i})^{2k-j}.  
\end{align}
The first term of the r.h.s. is equal to $g_{2k,n}(i/n)$ by \eqref{gkeven}. For the second term we argue as follows:
\begin{align*}
\text{For } j\neq {2k-1}, \ \ n \ \mathbb{E} [y_{i}^{j}](\mathbb{E}y_{i})^{2k-j}& = (n^{\frac{1}{2k-j}} \mathbb{E}y_{i})^{2k-j}  \mathbb{E}[y_{i}^{j}]\\
&\overset{n \rightarrow \infty}{\longrightarrow} 0, \ \ \ \ \text{ by condition } \eqref{gkodd}.  
\end{align*}
\begin{align*}
\text{For } j={2k-1}, \ \ n \ \mathbb{E} [y_{i}^{2k-1}] \mathbb{E}y_{i}& = (\sqrt{n}  \mathbb{E} [y_{i}^{2k-1}])  (\sqrt{n}  \mathbb{E}y_{i})\\
&\overset{n \rightarrow \infty}{\longrightarrow} 0, \ \ \ \ \text{ by condition } \eqref{gkodd}.
\end{align*}
Hence from \eqref{meanzero-noiid}, we see condition \eqref{gkeven} is true for the matrix $\widetilde{Z}_n$. Similarly we can show that \eqref{gkodd} is true for $\widetilde{Z}_n$. Hence Assumption A holds for the matrix $\widetilde{Z}_n$.

Now observe that using Lemma \ref{lem:metric},
\begin{align*}
d_2^2(\mathbb{E}\mu_{Z_n},\mathbb{E}\mu_{\widetilde{Z_n}}) & \leq \frac{1}{n} \displaystyle \sum_{i} \ n (\mathbb{E}y_{i})^2\\ 
& \leq n \ (\underset{i}{\sup} \ \mathbb{E}y_{i})^2 \\
&= (\underset{i}{\sup} \ \sqrt{n}\ \mathbb{E}y_{i})^2 \  \overset{n \rightarrow \infty}{\longrightarrow} 0, \ \ \ \ \text{ by condition } \eqref{gkodd}. 
\end{align*}
Hence the LSD of $Z_n$ and $\widetilde{Z}_n$ are same. Hence we can assume the entries of $Z_n$ have mean zero.

\vskip3pt

Now we prove the part (a) of the theorem.
\vskip3pt

 \noindent \textbf{Step 2}. In this step we verify that for every $k\geq 1$, $\frac{1}{n}\mathbb{E}[\Tr (Z_n)^k] \rightarrow \alpha_k$ as $n \rightarrow \infty$. This is referred to as the first moment condition.
 
From \eqref{momentf1usual}, using the fact that $\mathbb{E}(y_{i})=0$,  we have 	
\begin{equation}\label{momentnoniid}
\lim_{n \rightarrow \infty}\frac{1}{n}\mathbb{E}[\Tr(Z_n)^{k}] =\displaystyle\lim_{n \rightarrow \infty}\frac{1}{n} \sum_{\pi:\ell(\pi)=k}\mathbb{E}[Y_{\pi}] 
 = \lim_{n \rightarrow \infty}  \sum_{b=1}^k  \sum_{\underset{\text{with b distinct letters}}{\omega \ \text{matched}}} \frac{1}{n} \sum_{\pi \in \Pi(\boldsymbol{\omega})} \mathbb{E}(Y_{\pi}).  
\end{equation}
Now suppose $\boldsymbol {\omega}$ is a word with $b$ distinct letters each letter appearing $k
_1,k_2,\ldots,k_b$ times. Let the $j$-th distinct letter appear for the first time at $i_j, 1\leq j \leq b$. 
 Write $(\pi(i_j-1),\pi(i_j))$ as $(m_j,l_j)$. Clearly, $m_1=\pi(0)$ and $l_1=\pi(1)$. 
Let $S$ be the set of distinct generating vertices of $\boldsymbol {\omega}$. 
Suppose $\boldsymbol {\omega}\in E_b(k)$. 
Then the contribution of this $\boldsymbol {\omega}$ in \eqref{momentnoniid} can be written as 
\begin{align}\label{finitesum-rev}
\frac{1}{n^{b+1}}\displaystyle \sum_S \prod_{j=1}^b g_{k_j,n}\bigg(\frac{m_j+l_j-2\ \ (\text{mod }n)}{n}\bigg). 
\end{align}
Now from Step 4 of Lemma \ref{lem:rev}, observe that for $j\neq 1$, $m_j$ can be written as a linear combination of $\{l_i; 1\leq i\leq j-1\}$ and $m_1$. By abuse of notation let $m_1$ and $l_j, 1\leq j\leq b$ denote the indices of the generating vertices. Then, as $n \rightarrow \infty$, the above sum goes to 
\begin{align}\label{limit-rev}
\int_0^1 \int_0^1 \cdots \int_0^1 \prod_{j=1}^b \bigg[g_{k_j}(x_{m_j}+x_{l_j}) \boldsymbol{1}(0 \leq x_{m_j}+x_{l_j}\leq 1)+ g_{k_j}(x_{m_j}+x_{l_j}-1) \boldsymbol{1}(x_{m_j}+x_{l_j}> 1) \bigg] \ dx_S,
\end{align}
where 
$dx_S= dx_{m_1}dx_{l_1}dx_{l_2}\cdots dx_{l_b}$ is the $(b+1)$-dimensional Lebesgue measure on $[0,1]^{(b+1)}$.

By Lemma \ref{lem:rev}, it follows that the above integral is over all of $[0,1]^{(b+1)}$ if and only if $\boldsymbol{\omega}\in S_b(k)$. That is the integral actually reduces to a $c$  dimensional integral where $c \leq b$, if $\boldsymbol{\omega} \notin S_b(k)$ as $x_{m_1}$ and the $x_{l_j}$'s then satisfy a linear equation (see proof of Lemma \ref{lem:rev}).

As a result, the contribution of $\boldsymbol {\omega}$ as described in \eqref{limit-rev} is equal to 0 if $\boldsymbol {\omega}  \in E_b(k)\setminus S_b(k)$.
If $\boldsymbol {\omega} \in S_b(k)$, then the contribution of $\boldsymbol {\omega}$ is  
\begin{align}\label{limit-revf}
\int_0^1 \int_0^1 \cdots \int_0^1 \prod_{j=1}^b h_{k_j}(x_{m_j},x_{l_j}) \ dx_S,
\end{align}
where $h_{k_j}(x_{m_j},x_{l_j})=g_{k_j}(x_{m_j}+x_{l_j})   \boldsymbol{1}(0 \leq x_{m_j}+x_{l_j}\leq 1)+ g_{k_j}(x_{m_j}+x_{l_j}-1) \boldsymbol{1}(x_{m_j}+x_{l_j}> 1)$.

Now for any $m \in \mathbb{N}$, let 
$$f_{2m}(x)=\int_{0}^1 h_{2m}(x,y) \ dy = \int_{0}^1 g_{2m}(x+y) \boldsymbol{1}(0 \leq x+y \leq 1)+ g_{2m}(x+y-1) \boldsymbol{1}( x+y >1)\ dy .$$
Note that 
\begin{align}\label{integral-rev}
f_{2m}(x)&= \int_x^1 g_{2m}(t) \ dt \ + \int_0^x g_{2m}(t) \ dt= \int_0^1 g_{2m}(t) \ dt =C_{2m},
\end{align}
is independent of $x$.
Then, using \eqref{integral-rev}, \eqref{limit-revf} can be written as 
\begin{align}
{\int_0^1 \int_0^1 \cdots \int_0^1} \prod_{j=1}^{b-1} h_{k_j}(x_{m_j},x_{l_j})C_{s_b} \ dx_{m_1} dx_{l_1}\cdots dx_{l_{b-1}}.
\end{align}
Proceeding in this manner  the contribution of $\boldsymbol {\omega}$ from  \eqref{limit-revf} can be obtained as follows:
\begin{align}\label{limit-revfi}
\int_0^1 \prod_{j=1}^b C_{k_j} \ dx_0= \prod_{j=1}^b C_{k_j}.
\end{align}
Now suppose $\boldsymbol {\omega} \notin E(2k)$. Suppose $\boldsymbol{\omega}$ contains $b_1$ distinct letters that appears even number of times and $b_2$ number of distinct letters that appears odd number of times and $b=b_1+b_2$. So
we  assume that for each $\pi \in \Pi(\boldsymbol {\omega})$, 
$k_{j_p}$, $1\leq p \leq b_1$ are even and $k_{j_q}$, $b_1+1\leq q \leq b_1+b_2$ are odd. Hence the contribution of this $\boldsymbol {\omega}$ to  \eqref{momentnoniid} is as follows:
\begin{align}\label{finitesum-nons2k}
&\frac{1}{n}   n^{-{b_1}} n^{-(b_2-\frac{1}{2})}
\sum_{S} \prod_{p=1}^{b_1} h_{k_{j_p}}(x_{m_{j_p}},x_{l_{j_p}}) \prod_{q=b_1+1}^{b_1+b_2} n^{\frac{b_2-1/2}{b_2}} \mathbb{E}\Big[y_{(t_{i_q}-2) (\text{mod }n)}^{k_{j_q}}\Big] \nonumber\\
&= \frac{1}{n^{b_1+b_2+\frac{1}{2}}} \sum_{S} \prod_{p=1}^{b_1} h_{k_{j_p}}(x_{m_{j_p}},x_{l_{j_p}})   \prod_{q=b_1+1}^{b_1+b_2} n^{\frac{b_2-1/2}{b_2}} \mathbb{E}\Big[y_{(t_{i_q}-2)\ (\text{mod }n)}^{k_{j_q}}\Big].
\end{align}
For $n$ large, $n^{\frac{b_2-1/2}{b_2}} \mathbb{E}[y_{(t_{i_q}-2)\ (\text{mod }n)}^{k_{j_q}}]<1$ for any $b_1+1\leq q\leq b_1+b_2$ and $\prod_{p=1}^{b_1} h_{k_{j_p}}(x_{m_{j_p}},x_{l_{j_p}})\leq M$ (independent of $n$). Now as $\boldsymbol{\omega} \notin S_b(k)$, from Lemma \ref{lem:rev} we have, $|S|\leq b$. Thus any word that is not even  contributes 0 as $n \rightarrow \infty$.

For any partition $\sigma\in S_b(2k)$, let $\{V_1, \ldots V_b\}$ be its partition blocks. Then from \eqref{momentnoniid} and \eqref{limit-revfi}, we have 
\begin{align}\label{rev-noniid}
\lim_{n \rightarrow \infty}\frac{1}{n}\mathbb{E}[\Tr(Z_n)^{2k}]& =    \sum_{b=1}^{k} \sum_{\sigma \in S_b(2k)} \prod_{i=1}^b C_{|V_i|}= \displaystyle \sum_{\sigma \in S(2k)} C_{\sigma}.
 \end{align}
We also note that $\displaystyle\lim_{n \rightarrow \infty}\frac{1}{n}\mathbb{E}[\Tr(Z_n)^{2k+1}]=0$ for any $k\geq 0$. 
 This proves the first moment condition.
\vskip3pt

\noindent \textbf{Step 3}. Here we show that $ \lim_{n \rightarrow \infty}\frac{1}{n}\mathbb{E}[\Tr(Z_n)^{2k}]=\gamma_{2k}$ determines a unique distribution.
\begin{align*}
\gamma_{2k}= &\lim_{n \rightarrow \infty}\frac{1}{n}\mathbb{E}[\Tr(Z_n)^{2k}]
 \leq \displaystyle \sum_{\sigma \in S(2k)} M_{\sigma}
 \leq \displaystyle \sum_{\sigma \in \mathcal{P}(2k)} M_{\sigma} 
 =\alpha_{2k}.
\end{align*}
As $\{\alpha_{2k}\}$ satisfies Carleman's condition, $\{\gamma_{2k}\}$  does so. Hence the sequence of moments $\{\gamma_{2k}\}$ determines a unique distribution.

Therefore, there exists a measure $ \mu $ with moment sequence $\{\gamma_{2k}\}$ such that $\mathbb{E}\mu_{Z_n}$ converges weakly to $ \mu$.

It is easy to see by the explicit  formula for eigenvalues of $RC_n$ (see (1.3) of \citep{bose2018random}) that the measures $\mathbb{E}\mu_{Z_n}$ are symmetric for every $n\geq 1$. Hence the LSD $\mu$ is symmetric and $\{0,C_2,0,C_4,\ldots\}$ gives the sequence of half cumulants for it. 

\noindent This completes the proof of part (a).

\vskip3pt
\noindent \textbf{Step 4}. To prove part (b) of the theorem, observe that from Lemma \ref{lem:metric},
 \begin{align}\label{d2inequality}
 d_2^2(\mathbb{E}\mu_{RC_n},\mathbb{E}\mu_{Z_n}) \leq \frac{1}{n} \mathbb{E}[ \Tr(RC_n-Z_n)^2] =
  \frac{1}{n} \sum_{i} n \mathbb{E}[ x_{i}^2[\boldsymbol {1}_{[|x_{i}| > t_n]}],
 \end{align}
 where the last equality follows as each $x_i$ occurs $n$ times in $RC_n$.
 
Now if $\{t_n\}$ also satisfies condition \eqref{truncation}, then using \eqref{d2inequality} and part (a) we can say that the EESD of $RC_n$ converges to $ \mu$. This proves part (b).
\end{proof}

\subsection{Proof of Theorem \ref{thm:mainsym}}
First of all note that the entries of the matrix $Z_n$ can be assumed to have mean zero. This reduction follows from Step 1 in the proof of Theorem \ref{thm:mainrev}. 


\vskip3pt

Next we prove the first moment condition. From \eqref{momentf1usual} and using the fact that $\mathbb{E}(y_{i})=0$,  we have 	
\begin{align}\label{momentnoniid-sym}
\lim_{n \rightarrow \infty}\frac{1}{n}\mathbb{E}\big[\Tr(Z_n)^{k}\big]& =\lim_{n \rightarrow \infty}\frac{1}{n} \sum_{\pi:\ell(\pi)=k}\mathbb{E}[Y_{\pi}] 
 = \lim_{n \rightarrow \infty}  \sum_{b=1}^k  \sum_{\underset{\text{with b distinct letters}}{\omega \ \text{matched}}}\frac{1}{n} \sum_{\pi \in \Pi(\boldsymbol{\omega})} \mathbb{E}[Y_{\pi}].  
\end{align}
Now suppose $\boldsymbol {\omega}$ is a word with $b$ distinct letters each letter appearing $k
_1,k_2,\ldots,k_b$ times. 
Let the $j$-th distinct letter appears at $(\pi(i_j-1),\pi(i_j))$-th position for the first time. Denote $(\pi(i_j-1),\pi(i_j))$ as $(m_j,l_j)$. 
Let $u_i=s_{i}/n$ as defined in Lemma \ref{lem:sym} and $U_n= \{0,1/n,2/n,\ldots,(n-1)/n\}$. 

Let $\boldsymbol {\omega}\in E_b(k)$ where each distinct letter appears $k_1,k_2,\ldots,k_b$ times. Clearly as observed in Lemma \ref{lem:sym} there are $ \prod_{i=1}^b {{k_i-1} \choose \frac{k_i}{2}}$ equations for determining the non-generating vertices, once the generating vertices are chosen. For each of the combination of equations we get the same contribution due to the structure of the link function. Then the contribution of each such combination of equations for the word $\boldsymbol {\omega}$ in \eqref{momentnoniid} can be written as 
\begin{align}\label{finitesum-sym}
\frac{1}{n^{b+1}}\displaystyle \sum_S \prod_{j=1}^b g_{k_j,n}\bigg(\frac{1}{2}- |\frac{1}{2}-|u_{i_j}||\bigg),  
\end{align}
where $S$ is the set of distinct generating vertices of $\boldsymbol {\omega}$. 

Now from Step 4 of Lemma \ref{lem:sym}, observe that for each set of linear combination, whenever $j\neq 1$, $m_j$ can be written as a linear combination of $\{l_i; 1\leq i\leq j-1\}$ and $m_1$. By abuse of notation let $m_1$ and $l_j, 1\leq j\leq b$ denote the indices of the generating vertices. Then, as $n \rightarrow \infty$, the above sum goes to 

\begin{align}\label{limit-sym}
{\int_0^1 \int_0^1 \cdots \int_0^1} \prod_{j=1}^b g_{k_j}\bigg(\frac{1}{2}- |\frac{1}{2}-|x_{m_j}-x_{l_j}||\bigg)  \ dx_S,
\end{align}
where $dx_S= dx_{m_1}dx_{l_1}\cdots dx_{l_b}$ is the $(b+1)$-dimensional Lebesgue measure.

Now suppose $\boldsymbol {\omega} \in E_b(k)$. Then for any $m \in \mathbb{N}$,  
\begin{align}\label{integral-sym}
f_{2m}(x) &:= \int_{0}^1 g_{2m}\Big(\frac{1}{2}- |\frac{1}{2}-|x-y||\Big)  \ dy \nonumber \\
&= \int_0^{\frac{1}{2}} g_{2m}(t) \ dt + \int_0^{\frac{1}{2}} g_{2m}(t)\ dt= 2  \int_0^{\frac{1}{2}} g_{2m}(t) \ dt = C_{2m}.
\end{align}
So  $f_{2m}(x)$ is independent of $x$.
Then, using \eqref{integral-sym}, \eqref{limit-sym} can be written as 
\begin{align}
\int_0^1 \int_0^1 \cdots \int_0^1 \prod_{j=1}^{b-1} g_{k_j}\bigg(\frac{1}{2}- \big|\frac{1}{2}-|x_{m_j}-x_{l_j}|\big|\bigg) C_{s_b}   \ dx_{m_1} dx_{l_1} \cdots dx_{l_{b-1}}.
\end{align}
Proceeding in this manner, for $\boldsymbol {\omega}\in E_b(k)$, \eqref{limit-sym} can be written as
\begin{align*}
\int_0^1 \prod_{j=1}^b C_{k_j} \ dx_0= \prod_{j=1}^b C_{k_j}.
\end{align*}
As there are  $ \prod_{i=1}^b {{k_i-1} \choose \frac{k_i}{2}}$ set of  equations that contribute identically, the total contribution for the word 
$\boldsymbol {\omega}\in E_b(k)$ is 
\begin{align}\label{limit-symfi}
 \prod_{j=1}^b{{k_j-1} \choose \frac{k_j}{2}} C_{k_j}.
\end{align}
Now suppose $\boldsymbol {\omega} \notin E(2k)$. Suppose $\boldsymbol{\omega}$ contains $b_1$ distinct letters that appear even number of times and $b_2$ number of distinct letters that appears odd number of times and $b=b_1+b_2$.
Using a similar argument as in Step 2 of the proof of Theorem \ref{thm:mainrev}, it is easy to see that the contribution of this $\boldsymbol {\omega}$ is 0 (see \eqref{finitesum-nons2k}).

Therefore $\displaystyle\lim_{n \rightarrow \infty}\frac{1}{n}\mathbb{E}[\Tr(Z_n)^{2k+1}]=0$ for any $k\geq 0$.
 
For any partition $\sigma\in E_b(2k)$, let $\{V_1, \ldots V_b\}$ be its blocks. Then from \eqref{momentnoniid} and \eqref{limit-symfi}, we have
\begin{equation}\label{sym1}
\lim_{n \rightarrow \infty}\frac{1}{n}\mathbb{E}[\Tr(Z_n)^{2k}] = \displaystyle \sum_{b=1}^k \sum_{\sigma \in E_b(2k)} \prod_{j=1}^b \frac{1}{2} {{|V_j|} \choose \frac{|V_j|}{2}} C_{|V_j|}
=  \sum_{\sigma \in E(2k)} a_{\sigma} C_{\sigma},
\end{equation}
where $a_{2n}= \frac{1}{2} {{2n} \choose n} $,  and $a_{\sigma}$ and $C_{\sigma}$ are the multiplicative extensions of the sequence $a_{2n}$ and $C_{2n}$ respectively.
Also we have that $\displaystyle\lim_{n \rightarrow \infty}\frac{1}{n}\mathbb{E}[\Tr(Z_n)^{2k+1}]=0$ for any $k\geq 0$. This establishes the first moment condition.
\vskip3pt

Now, we show that $\displaystyle\lim_{n \rightarrow \infty}\frac{1}{n}\mathbb{E}[\Tr(Z_n)^{2k}]=\gamma_{2k}$ determines a unique distribution. As $a_{2k}\leq 2^{2k}$, we have 
\begin{equation*}
\gamma_{2k}= \lim_{n \rightarrow \infty}\frac{1}{n}\mathbb{E}[\Tr(Z_n)^{2k}]
 \leq \displaystyle \sum_{\sigma \in E(2k)} a_{\sigma}M_{\sigma}
  \leq \displaystyle \sum_{\sigma \in \mathcal{P}(2k)} 2^{2k} M_{\sigma}
= 2^{2k} \alpha_{2k}.
\end{equation*}
Since $\{\alpha_{2k}\}$ satisfies Carleman's condition, $\{\gamma_{2k}\}$ also does so. This proves that there is a unique distribution whose even moments are $\{\gamma_{2k}\}_{k \geq 1}$. 

Hence the EESD of $Z_n$ converges weakly to $\mu$. This completes the proof of part (a).\\
 
 \vskip3pt
 
  To prove part (b) of the theorem, observe that by Lemma \ref{lem:metric},
 \begin{align}\label{d2inequality-sym}
 d_2^2(\mathbb{E}\mu_{SC_n},\mathbb{E}\mu_{Z_n})& \leq \frac{1}{n} \mathbb{E}[\Tr(SC_n-Z_n)^2] \nonumber \\
 & = \begin{cases}
  \frac{1}{n}  \sum_{i\neq 0} 2n \mathbb{E}[ x_{i}^2 \boldsymbol {1}_{[|x_{i}| > t_n]}]+ \frac{1}{n} n \mathbb{E}[ x_{0}^2 \boldsymbol {1}_{[|x_{i}| > t_n]}] & \text{ for } n \text{ odd},\\
  \frac{1}{n}  \sum_{i\neq 0, \frac{n}{2}} 2n  \mathbb{E}[ x_{i}^2 \boldsymbol {1}_{[|x_{i}| > t_n]}] + \frac{1}{n} n \mathbb{E}[ x_{0}^2 \boldsymbol {1}_{[|x_{i}| > t_n]}] + \frac{1}{n} n \mathbb{E} [x_{\frac{n}{2}}^2 \boldsymbol {1}_{[|x_{i}| > t_n]}] & \text{ for }n \text{ even}.
  \end{cases}
 \end{align}
 Now if $\{t_n\}$ also satisfies condition \eqref{truncationsym}, then we see that the right side of \eqref{d2inequality-sym} goes to $0$ and hence from (a) we can say that the EESD of $SC_n$ converges to $\mu$. This proves part (b).
%
	
\subsubsection{A description of the LSD of the Symmetric Circulant}\label{description of limit}
\begin{definition}
For probability measures $\mu$ on $\mathbb{R}$ and $\nu$ on $(0,\infty)$, their product convolution, $\mu \otimes \nu$ is given by 
 $$\mu \otimes \nu(B)= \int_0^{\infty} \mu (x^{-1} B) \nu(dx)= (\mu \times \nu)f^{-1}(B) \text{ with } f(x,y)=xy$$ for any Borel set $B$. This gives a probability measure on $\mathbb{R}$. Let $F_{\mu \otimes \nu}(\cdot)$ be the distribution function of this measure. Then 
 \begin{align}\label{product-conv}
 F_{\mu \otimes \nu}(y)= \int_0^{\infty} \mu \big((-\infty,\frac{y}{x}]\big) \nu(dx).
 \end{align}
\end{definition}

\begin{lemma}\label{a_nmoment}
The sequence $\{a_{2n}\}_{n \geq 1}$ defined in Theorem \ref{thm:mainsym} is the moment sequence of a unique probability distribution. That is, there exists a random variable $Z$ such that for every $n \geq 1$, $\mathbb{E}[Z^{2n}]= a_{2n}$ and 
$\mathbb{E}[Z^{2n-1}]= 0$.
\end{lemma}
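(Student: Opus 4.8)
The statement to prove is Lemma \ref{a_nmoment}: the sequence $a_{2n}=\frac{1}{2}\binom{2n}{n}$ is the even-moment sequence of a symmetric probability distribution whose odd moments vanish.

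The plan is to exhibit the distribution explicitly rather than appeal to an abstract moment-existence criterion. First I would recall the classical fact that $\binom{2n}{n}=\frac{1}{\pi}\int_{-2}^{2} x^{2n}\frac{1}{\sqrt{4-x^2}}\,dx$, i.e. the central binomial coefficients are precisely the even moments of the arcsine law on $[-2,2]$ with density $\frac{1}{\pi\sqrt{4-x^2}}$. Since the arcsine law is symmetric about $0$, its odd moments are all $0$. Consequently $a_{2n}=\frac{1}{2}\binom{2n}{n}$ is half the $2n$-th moment of the arcsine law, which suggests writing $Z=\eta\cdot W$ as a product, where $W$ follows the arcsine law and $\eta$ is an independent factor chosen so that multiplying by it scales the even moments by $\frac12$ while preserving symmetry and the vanishing of odd moments.

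Next I would make this precise using the product convolution $\mu\otimes\nu$ introduced in the definition immediately preceding the lemma. Concretely, take $\mu$ to be the arcsine law on $[-2,2]$ and let $\nu$ be a point mass (or a suitable distribution on $(0,\infty)$) realising the factor $2^{-1/2}$ at the level of $2n$-th moments; since $\mathbb{E}[(cW)^{2n}]=c^{2n}\mathbb{E}[W^{2n}]$, a deterministic scaling by $c$ multiplies the $2n$-th moment by $c^{2n}$, which is not constant in $n$, so a single scalar will not produce the uniform factor $\frac12$. The clean fix is to verify directly that $\frac{1}{2}\binom{2n}{n}$ is itself a genuine moment sequence: I would check that $a_{2n}=\mathbb{E}[Z^{2n}]$ for $Z=B\,W$, where $B$ is a symmetric $\pm1$ Bernoulli-type auxiliary variable is unnecessary since $W$ is already symmetric; instead the factor $\frac12$ is absorbed by presenting $Z$ as the random variable whose law is $\frac12(\text{arcsine law})+\frac12\delta_0$, the mixture putting mass $\frac12$ at the origin. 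Then $\mathbb{E}[Z^{2n}]=\frac12\binom{2n}{n}+\frac12\cdot 0^{2n}=\frac12\binom{2n}{n}=a_{2n}$ for $n\ge1$, and $\mathbb{E}[Z^{2n-1}]=0$ by symmetry, exactly as required.

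Finally, to conclude uniqueness I would invoke Carleman's condition: since $\binom{2n}{n}\le 4^n$, we have $a_{2n}\le 4^n/2$, so $a_{2n}^{-1/2n}\ge (2\cdot 4^n)^{-1/2n}\to \tfrac12>0$, whence $\sum_n a_{2n}^{-1/2n}=\infty$ and the moment sequence determines a unique distribution; this identifies that unique distribution as the mixture above. The main obstacle, and the place I would be most careful, is pinning down the correct auxiliary factor producing the constant $\frac12$: the naive deterministic rescaling fails because it introduces an $n$-dependent factor, and the honest route is the mixture with an atom at $0$ (equivalently, a product convolution $\mu\otimes\nu$ with $\nu=\frac12\delta_1+\frac12\delta_0$ in the sense of the preceding definition), after which everything reduces to the classical arcsine moment identity plus a one-line Carleman check.
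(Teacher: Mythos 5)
Your proposal is correct and is essentially the paper's own argument: the mixture $\tfrac12(\text{arcsine law on }[-2,2])+\tfrac12\delta_0$ is exactly the law of the paper's $Z=XY$, where $X$ has the arcsine density $\tfrac{1}{\pi\sqrt{4-x^2}}$ (so $\mathbb{E}[X^{2k}]=\binom{2k}{k}$) and $Y$ is an independent $\{0,1\}$-valued Bernoulli$(\tfrac12)$ variable. Your added Carleman check (which the paper leaves implicit, uniqueness also following from compact support) is fine.
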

 \begin{proof}
Let $\nu$ be the \textit{arcsine law} on $(0,1)$ with density $\rho(x)=\frac{2}{\pi} \frac{1}{\sqrt{1-x^2}}$ for $0<x<1$ and $\mu$ be the \textit{Bernoulli distribution} giving probability $\frac{1}{2}$ to $1$ and $-1$ each.
 
Consider the product convolution $\mu \otimes \nu$ where $\mu$ and $\nu$ are defined as above. Let $G(\cdot)$ be the distribution function of the arcsine law $\nu$. Then from  \eqref{product-conv}, we get
 $$ F_{\mu \otimes \nu}(y)=\left\{\begin{array}{ll}
 0 & \mbox{for } y \leq -1,\\
  & \\
 \int_{-y}^1 \frac{1}{2} \frac{2}{\pi} \frac{1}{\sqrt{1-x^2}} dx = \frac{1}{2} (1 - G(-y)) & \mbox{for }-1\leq y \leq 0,\\
  & \\
 \int_0^y \frac{2}{\pi} \frac{1}{\sqrt{1-x^2}} dx & \mbox{for } 0\leq y \leq 1.\\
 \end{array}
 \right.
 $$
 Therefore 
 \[  F_{\mu \otimes \nu}(y)= \begin{cases} 
      0 & \mbox{for } y \leq -1,\\
       \frac{1}{2} (1 - G(-y)) & \mbox{for } -1\leq y \leq 0,\\
      \frac{1}{2} (1 + G(y)) & \mbox{for } 0\leq y\leq 1, \\
      1 & \mbox{for }y \geq 1 .
   \end{cases}
\] 
 Hence the density of the measure $\mu \otimes \nu$ is given by $$f_{\mu \otimes \nu}(x)= \frac12\rho(|x|), \ \ \ x \in [-1,1].$$ 
 Consider the density function  $$f(x)= \frac{1}{4} \rho\Big(\frac{|x|}{2}\Big),\ \ \ \ x \in [-2,2].$$
 Let $X$ be a random variable with density  $f$. Then for $k\geq 1$, $\mathbb{E}[X^{2k-1}]=0 $ and 
\begin{equation}\label{2kmoment}
\mathbb{E}[X^{2k}]=\frac{1}{2\pi} \int_{-2}^2 \frac{x^{2k}}{\sqrt{1-\frac{x^2}{4}}} dt= \frac{2^{2k}}{\pi} \int_0^1 y^{k- \frac{1}{2}} (1-y)^{-\frac{1}{2}} dy=\frac{2^{2k}}{\pi} \frac{\Gamma{(k+ \frac{1}{2})}\Gamma{(\frac{1}{2})}}{\Gamma{(k+1)}}= {{2k} \choose k}.
\end{equation}  
Now let $Y$ be the random variable which takes values $1$ and $0$ with probability $ \frac{1}{2} $ each. Suppose $Y$ is independent of $X$. Then from \eqref{2kmoment}, it is easy to see that if $Z=XY$, then $\mathbb{E}[Z^{2k}]=a_{2k}$. 
\end{proof}
  


\subsection{Proof of Theorem \ref{thm:maintoe}}  
 \begin{proof}
[\textbf{Proof of Theorem \ref{thm:maintoe}}]
We first prove the theorem for the Toeplitz matrix and then for the Hankel matrix.
\vskip3pt
\noindent \textbf{Toeplitz matrix:} First of all note that the entries of the Toeplitz matrix $Z_n$ can be assumed to have mean zero. This reduction follows from Step 1 in the proof of Theorem \ref{thm:mainrev}.

\vskip3pt 
\noindent  Next we prove the first moment condition for this matrix.

From \eqref{momentf1usual} and using the fact that $\mathbb{E}(y_{i})=0$,  we have 	
\begin{align}\label{momentnoniid-toe}
\lim_{n \rightarrow \infty}\frac{1}{n}\mathbb{E}[\Tr(Z_n)^{k}]& =\displaystyle\lim_{n \rightarrow \infty}\frac{1}{n} \sum_{\pi:\ell(\pi)=k}\mathbb{E}[Y_{\pi}] 
 = \displaystyle\lim_{n \rightarrow \infty} \displaystyle \sum_{b=1}^k  \sum_{\underset{\text{with b distinct letters}}{\omega \ \text{matched}}}\frac{1}{n} \sum_{\pi \in \Pi(\boldsymbol{\omega})} \mathbb{E}(Y_{\pi}).  
\end{align}
Now suppose $\boldsymbol {\omega}$ is a word with $b$ distinct letters each letter appearing $k
_1,k_2,\ldots,k_b$ times. Let the $j$-th distinct letter appears at $(\pi(i_j-1),\pi(i_j))$-th position for the first time. Denote $(\pi(i_j-1),\pi(i_j))$ as $(m_j,l_j)$. 
Let $v_i=\pi({i})/n$ as defined in Lemma \ref{lem:toe} and $U_n= \{0,1/n,2/n,\ldots,(n-1)/n\}$. 

Let $\boldsymbol {\omega}\in E_b(k)$. 
Clearly, as observed in  Lemma \ref{lem:toe}, there are $ \prod_{i=1}^b {{k_i-1} \choose \frac{k_i}{2}}$ combination of equations of the $s_j$'s (and hence $v_j$'s) for determining the non-generating vertices, once the generating vertices are chosen. Let us denote a generic combination of the $v_j$'s by $L_{\boldsymbol{\omega}}^T$ (see \eqref{integral-gen}). For each of the combination of equations we get positive (possibly different) contribution (see Lemma \ref{lem:toe}). 
 Then the contribution of each  combination $L_{\boldsymbol{\omega}}^T$ corresponding to the word $\boldsymbol {\omega}$ in \eqref{momentnoniid-toe} can be written as 
\begin{align}\label{finitesum-toep}
 \frac{1}{n^{b+1}} \sum_S \prod_{j=1}^b g_{k_j,n}\big(|v_{m_j}-v_{l_j}|\big)\textbf{1}(0 \leq v_0+ L_{i}^T(v_S) \leq 1,\ \forall \  i \in S^{\prime}),  
\end{align}
where $S$ is the set of  distinct generating vertices and $S^{\prime}$ is   the set of indices of the non-generating vertices of $\boldsymbol {\omega}$.
 By abuse of notation let $m_1$ and $l_j, 1\leq j\leq b$ denote the indices of the generating vertices. Therefore as $n \rightarrow \infty$, the contribution of $\boldsymbol {\omega}$ in \eqref{momentnoniid-toe} is given by 
\begin{align}\label{limit-toe}
 \sum_{L_{\boldsymbol{\omega}}^T}\int_{0}^{1} \int_{0}^{1}\int_{0}^{1} \cdots  \int_{0}^{1} \prod_{j=1}^b g_{k_j}\big(|x_{m_j}-x_{l_j}|\big)  \textbf{1}(0 \leq x_0+  L_{i}^T(x_S) \leq 1,\ \forall \  i \in S^{\prime})\ dx_S, 
\end{align}
where $dx_S= dx_{m_1}dx_{l_1}\cdots dx_{l_b}$  denotes the $(b+1)$-dimensional Lebesgue measure on $[0,1]^{b+1}$. As $\boldsymbol {\omega}$ is an even word, from Lemma \ref{lem:toe}, it follows that for every set of linear combination $L_{\boldsymbol{\omega}}^T$, there is a set of positive Lebesgue measure in $[0,1]^{b+1}$ where the indicator function in the above integral takes the value 1. 

If $\boldsymbol{\omega}$ is not an even word, then following a  similar argument as given in Step 2 of the proof of Theorem \ref{thm:mainrev}, it can be shown that its contribution is zero in the limit. Therefore for $k\geq 0$, $$\displaystyle\lim_{n \rightarrow \infty}\frac{1}{n}\mathbb{E}[\Tr(Z_n)^{2k+1}]=0$$ and
 \begin{align}\label{toe1}
\gamma_{2k}=& \displaystyle\lim_{n \rightarrow \infty}\frac{1}{n}\mathbb{E}[\Tr(Z_n)^{2k}] \nonumber \\
= & \displaystyle \sum_{b=1}^k \sum_{\sigma \in E_b(2k)} \sum_{L_{\sigma}^T}\int_{0}^{1} \int_{0}^{1} \int_{0}^{1} \cdots  \int_{0}^{1} \prod_{j=1}^b g_{k_j}\big(|x_{m_j}-x_{l_j}|\big)  \textbf{1}(0 \leq x_0+ L_{i}^T(x_S) \leq 1,\ \forall \  i \in S^{\prime})\ dx_S. 
\end{align}
This establishes the first moment condition.

\vskip3pt

\noindent Now we show that $\gamma_{2k}= \lim_{n \rightarrow \infty}\frac{1}{n}\mathbb{E}[\Tr(Z_n)^{2k}]$ determines a unique distribution. 

Observe that the limiting moment sequence is dominated by the moment sequence  of the Symmetric Circulant case (see \eqref{sym1}). As the latter satisfies the Carleman's condition, so does $\gamma_{2k}$.

 \noindent Therefore we see that the there exists a measure $ \mu$ with moment sequence $\{\gamma_{2k}\}$ such that $\mathbb{E}\mu_{Z_n}$ converges weakly to $ \mu$. This completes the proof of  part (a). 

Proof of part (b) is trivial, so we skip it. 
This completes the proof for the Toeplitz matrix.

\vskip5pt
\noindent \textbf{Hankel matrix:} 
 First of all note that the entries of the matrix can be assumed to have mean zero due Step 1 in the proof of Theorem \ref{thm:mainrev}.

\vskip3pt 

\noindent Next we prove the first moment condition for this matrix. From \eqref{momentf1usual} and using the fact that $\mathbb{E}(y_{i})=0$,  we have 	
\begin{align}\label{momentnoniid-han}
\lim_{n \rightarrow \infty}\frac{1}{n}\mathbb{E}\big[\Tr(Z_n)^{k}\big]& =\displaystyle\lim_{n \rightarrow \infty}\frac{1}{n} \sum_{\pi:\ell(\pi)=k}\mathbb{E}[Y_{\pi}] 
 = \displaystyle\lim_{n \rightarrow \infty} \displaystyle \sum_{b=1}^k  \sum_{\underset{\text{with b distinct letters}}{\omega \ \text{matched}}}\frac{1}{n} \sum_{\pi \in \Pi(\boldsymbol{\omega})} \mathbb{E}(Y_{\pi}).  
\end{align}
Now suppose $\boldsymbol {\omega}$ is a symmetric word with $b$ distinct letters. Let $v_i=\pi(i)/n$ and $U_n=\{0,1/n,2/n,\ldots,(n-1)/n\}$ as defined in Lemma \ref{lem:han}. 
  We know that the $v_i$'s satisfy a linear relation given in \eqref{linear-han}. 
 Then the contribution of $\boldsymbol {\omega}$ in \eqref{momentnoniid-han} can be written as 
\begin{align}\label{finitesum-toe}
 \frac{1}{n^{b+1}} \sum_S \prod_{j=1}^b g_{k_j,n}\big(v_{m_j}+v_{l_j}\big)\textbf{1}(0 \leq L_{i}^H(v_S) \leq 1,\ \forall \  i \in S^{\prime}),  
\end{align}
where $S$ is the set of  distinct generating vertices and $S^{\prime}$ is the set of indices of the non-generating vertices of $\boldsymbol {\omega}$.

By abuse of notation let $m_1$ and $l_j, 1\leq j\leq b$ denote the indices of the generating vertices. Therefore as $n \rightarrow \infty$, for each symmetric word $\boldsymbol{\omega}$, the contribution to the limit in  \eqref{momentnoniid-han} is given by (see Lemma \ref{lem:han})
 \begin{align}\label{limit-han}
  {\int_0^1\int_0^1 \cdots \int_0^1} \prod_{j=1}^b g_{k_j}(x_{m_j}+x_{l_j}) \textbf{1}(0 \leq L_{i}^H(x_S) \leq 1,\ \forall \  i \in S^{\prime}) \ dx_S,
 \end{align}
 where $dx_S= dx_{m_1}dx_{l_1}\cdots dx_{l_b}$ is the $(b+1)$-dimensional Lebesgue measure. As $\boldsymbol {\omega}$ is a symmetric word, from Lemma \ref{lem:han}, it follows that there is a set of positive Lebesgue measure in $[0,1]^{b+1}$ where the indicator function in the above integral takes the value 1. 

If $\boldsymbol{\omega}$ is not an symmetric word, then following a very similar argument as given in the proof of Theorem \ref{thm:mainrev} it can be shown that its contribution is zero in the limit. Therefore for $k\geq 0$, $\displaystyle\lim_{n \rightarrow \infty}\frac{1}{n}\mathbb{E}[\Tr(Z_n)^{2k+1}]=0$ and
 \begin{align}\label{han}
\gamma_{2k}&=\displaystyle\lim_{n \rightarrow \infty}\frac{1}{n}\mathbb{E}[\Tr(Z_n)^{2k}] \nonumber\\
&= \displaystyle \sum_{b=1}^k \sum_{\sigma \in S_b(2k)}  {\int_0^1 \int_0^1 \cdots \int_0^1} \prod_{j=1}^b g_{k_j}(x_{m_j}+x_{l_j})\textbf{1}(0 \leq L_{i}^H(x_S) \leq 1,\ \forall \  i \in S^{\prime})  \ dx_S.
\end{align}

This establishes the first moment condition in this case.

\vskip3pt

\noindent Further, \ $\gamma_{2k}= \lim_{n \rightarrow \infty}\frac{1}{n}\mathbb{E}[\Tr(Z_n)^{2k}]$ determines a unique distribution. This is because $\gamma_{2k}$ is dominated by the moment sequence in the Reverse Circulant case (see \eqref{rev-noniid}). As the latter satisfies the Carleman's condition, so does $\gamma_{2k}$. 

 Hence there exists a measure $ \mu$ with moment sequence $\{\gamma_{2k}\}$ such that $\mathbb{E}\mu_{Z_n}$ converges weakly to $\mu$. This completes the proof of part (a).
 
\noindent  Proof of part $(b)$ is trivial, so we skip it.
\end{proof}
 
\section{Discussion}\label{example and discussion} 

In Section \ref{iid} we deduce Results \ref{result:rev}{\textemdash}\ref{result:toe}
from Theorems \ref{thm:mainrev}{\textemdash}\ref{thm:maintoe}.
Then we derive some results on the LSD of sparse matrices and deduce some results from \citep{banerjee2017patterned} in an unified approach. We also give some examples of LSD for matrices with a variance profile in Section \ref{variance profile}. We conclude the discussions by deriving some new results about band matrices in Section \ref{band matrices} and obtaining some of the conclusions of  \citep{basak2011limiting} and \citep{liu2011limit} as special cases of our results.

We first recall a general lemma regrading the proof of ESD using moment method that can be applied to some of the following cases that we will discuss. See Section 1.2 of \citep{bose2018patterned} for its proof.
\begin{lemma}\label{lem:genmoment}
Suppose $A_n$ is any sequence of symmetric random matrices such that the following conditions hold:
\begin{enumerate}
\item[(i)] For every $k\geq 1$, $\frac{1}{n}\mathbb{E}[\Tr (A_n)^k] \rightarrow \alpha_k$ as $n \rightarrow \infty$.
\item[(ii)] $\displaystyle \sum_{n=1}^{\infty}\frac{1}{n^4}\mathbb{E}[\Tr(A_n^k) \ - \ \mathbb{E}(\Tr(A_n^k))]^4  < \infty$ for every $k \geq 1$.
\item[(iii)] The sequence $\{\alpha_k\}$ is the moment sequence of a unique probability measure $\mu$.
\end{enumerate}
Then $\mu_{A_n}$ converges to $\mu$ weakly almost surely.
\end{lemma}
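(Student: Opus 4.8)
The plan is to run the method of moments pathwise, converting the two quantitative hypotheses into almost sure convergence of the random moments and then exploiting the determinacy asserted in (iii). The starting observation is that the $k$-th moment of the empirical spectral measure is exactly $\int x^k\, d\mu_{A_n}(x)=\frac{1}{n}\Tr(A_n^k)$, since $A_n$ is symmetric and $\mu_{A_n}$ places mass $1/n$ at each (real) eigenvalue. Thus hypothesis (i) says precisely that the expected empirical moments $\mathbb{E}\int x^k\, d\mu_{A_n}$ converge to $\alpha_k$ for every $k$.

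First I would upgrade (i) to almost sure convergence of the random moments using (ii). For fixed $k$ and $\epsilon>0$, Markov's inequality applied to the fourth absolute moment gives
$$\pr\Big(\big|\tfrac{1}{n}\Tr(A_n^k)-\tfrac{1}{n}\mathbb{E}\,\Tr(A_n^k)\big|>\epsilon\Big)\le \frac{\mathbb{E}[\Tr(A_n^k)-\mathbb{E}\,\Tr(A_n^k)]^4}{n^4\epsilon^4}.$$
Summing over $n$ and invoking (ii), the series on the right is finite, so the Borel--Cantelli lemma yields $\frac{1}{n}\Tr(A_n^k)-\frac{1}{n}\mathbb{E}\,\Tr(A_n^k)\to 0$ almost surely. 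Combining with (i) gives $\frac{1}{n}\Tr(A_n^k)\to\alpha_k$ almost surely for each fixed $k$. Since only countably many $k$ are involved, intersecting the corresponding probability-one events produces a single event $\Omega_0$ with $\pr(\Omega_0)=1$ on which $\int x^k\, d\mu_{A_n}\to\alpha_k$ holds simultaneously for all $k\ge 1$.

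The final step is to translate this convergence of all moments into weak convergence of $\mu_{A_n}$, working on the fixed event $\Omega_0$. On $\Omega_0$ we have a deterministic sequence of probability measures whose every moment converges to $\alpha_k$; by (iii) the sequence $\{\alpha_k\}$ is the moment sequence of a unique measure $\mu$, so the limiting moment problem is determinate. The Fr\'echet--Shohat theorem (convergence of all moments to those of a moment-determinate measure implies weak convergence) then gives $\mu_{A_n}\Rightarrow\mu$ on $\Omega_0$, that is, weakly almost surely.

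The routine parts are the Markov/Borel--Cantelli estimate and the bookkeeping of null sets; the step that requires the most care is the last one, namely justifying the passage from moment convergence to weak convergence. The delicate point there is that the method of moments produces weak convergence \emph{only} because (iii) guarantees moment-determinacy of $\mu$: without uniqueness one could extract merely a subsequential limit. I would therefore phrase the last step as an explicit appeal to (iii) together with the Fr\'echet--Shohat framework, noting that the measures $\mu_{A_n}$ are automatically tight once their second moments are bounded, which is the $k=2$ instance of the moment convergence already established on $\Omega_0$.
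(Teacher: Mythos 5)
Your argument is correct and is exactly the standard moment-method proof that the paper invokes by reference (it cites Section 1.2 of \citep{bose2018patterned} rather than proving the lemma in-line): Markov's inequality on the fourth central moment plus Borel--Cantelli to upgrade (i) to almost sure convergence of the empirical moments, intersection over the countably many $k$, and then Fr\'echet--Shohat using the determinacy in (iii). No gaps.
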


 To verify condition (ii) for the patterned matrices, we shall need an upper bound of the following set.   
\begin{align*}
Q_{k,4}^b = | \{(\pi_1,\pi_2,\pi_3,\pi_4):\ell(\pi_i)=k;\ \pi_i, 1 \leq i \leq 4 \ \text{jointly- and cross-matched with } b \text{ distinct letters} \}|.
\end{align*}
It can be shown that
\begin{equation}\label{four circuits}
|Q_{k,4}^b|\leq n^{2k+2} \ \ \ \text{for any } 1\leq b \leq 2k.
\end{equation}
For a proof of this fact, see Lemma 1.4.3 (a) in \citep{bose2018patterned}. The proof of the lemma given in \citep{bose2018patterned} is for the case where the entries are i.i.d. However the same arguments can be adapted if the entries are independent. 

\subsection{Full i.i.d. entries}\label{iid}
Let $A_n$ be any one of the four $n \times n$ patterned matrices introduced in Section \ref{introduction}. Let the input sequence of $A_n$ be $\{\frac{1}{\sqrt n}{x_{i}}: i \geq 0\}$, 
where $x_{i}$ are independent and identically distributed with mean 0 and variance 1.	

Let $t_n=n^{-1/3}$. Then $ t_n\sqrt{n} \rightarrow \infty$ as $n \rightarrow \infty$ and
\begin{align*}
\displaystyle \lim_{n \rightarrow \infty} n  \mathbb{E}\bigg [\bigg(x_{i}/\sqrt{n}\bigg)^2 \boldsymbol{1}_{[|x_{i}/\sqrt{n}| \leq t_n]}\bigg]  = \displaystyle \lim_{n \rightarrow \infty}   \mathbb{E}\bigg [\big(x_{0}\big)^2 \boldsymbol{1}_{[|x_{0}| \leq \sqrt{n} t_n]}\bigg]=1.
\end{align*}
Also, for any $k>2$,
\begin{align*}
n \mathbb{E}\bigg [\bigg(\frac{x_{i}}{\sqrt{n}}\bigg)^{k} \boldsymbol{1}_{[|\frac{ x_{i}}{\sqrt{n}}| \leq t_n]}\bigg] \ &
= n\ \mathbb{E}\bigg [\bigg(\frac{x_i}{\sqrt{n}}\bigg)^{k-2}  \bigg(\frac{x_i}{\sqrt{n}}\bigg)^{2} \boldsymbol{1}_{[|x_{i}| \leq t_n  \sqrt{n}}\bigg]\\
& \leq  t_n^{k-2}\ \mathbb{E} \big [{x_{i}}^{2} \boldsymbol{1}_{[|y_{i}| \leq {t_n  \sqrt{n}}]}\big]\\
& \leq t_n^{k-2}\\
& =  (n^{- \frac{1}{3}})^{k-2} \ \rightarrow \ 0 \ \ \text{ as } n \rightarrow \infty.
\end{align*}
Now for any $t > 0$, 
\begin{align*}
\displaystyle \sum_{i=0}^{n-1}\mathbb{E}\big[ \big(x_{i}/\sqrt{n}\big)^2 \boldsymbol {1}_{[|x_{i}/\sqrt{n}| > t_n]}\big] & =  \frac{1}{n}\displaystyle \sum_{i=0}^{n-1}\mathbb{E}[ x_{i}^2 \boldsymbol {1}_{[|x_{i}| > {t_n \sqrt{n}}]}]\\
& \leq \frac{1}{n}\displaystyle \sum_{i=0}^{n-1} \mathbb{E}[x_{i}^2\boldsymbol{1}_{[|x_{i}| > t]}] ]\ \ \text{for all large} \ n,\\
& \overset{a.s.}{\longrightarrow}  \mathbb{E}\big[ x_{0}^2[\boldsymbol {1}_{[|x_{0}| > t]}]\big], \ \ \text{as}\ \ n\to \infty.
\end{align*}
As $\mathbb E(x_0^2)=1$, the last term in the above expression tends to zero as $t\to\infty$.

So $\{\frac{1}{\sqrt n}x_i;i\geq 0\}$ satisfy Assumption A  with $g_2\equiv 1$ and $g_{2k}\equiv 0$ for all $k \geq 2$. 
Therefore from Theorems \ref{thm:mainrev}{\textemdash}\ref{thm:maintoe},
the EESD of $A_n$ converges weakly to a symmetric probability measure $\mu$, say. We now identify $\mu$ in each case:
\vskip3pt

\noindent (i) Suppose $A_n$ is $RC_n$.
Then by Theorem \ref{thm:mainrev}, 
\begin{align*}
\beta_{2k}(\mu)= \displaystyle \sum_{\sigma \in S(2k)} C_{\sigma}= \displaystyle \sum_{\sigma \in S_k(2k)} 1= k!.
\end{align*}
The second last equality holds since $C_{2k}=0$ for all $k \geq 2$ as $g_{2k}\equiv 0$ for all $k \geq 2$ and $C_2=g_2=1$. 
Hence $\mu$ is  the \textit{symmetrised Rayleigh distribution}. 
\vskip3pt

\noindent (ii) Now suppose $A_n$ is $SC_n$.
Then by Theorem \ref{thm:mainsym}, 
\begin{align*}
\beta_{2k}(\mu)= \displaystyle \sum_{\sigma \in E(2k)} a_{\sigma}C_{\sigma}= \displaystyle \sum_{\sigma \in E_k(2k)} 1= \frac{(2k)!}{2^k k!}. 
\end{align*}
The second last equality in the above expression holds  since $C_{2k}=0$ for all $k \geq 2$ as $g_{2k}\equiv 0$ for all $k \geq 2$, $C_2=g_2=1$ and $a_2=1$. 
Thus $\mu$ is  the standard normal distribution. 
 \vskip3pt

\noindent (iii) Similarly choosing $A_n$ as the Toeplitz and Hankel matrices we get the convergence of the EESDs.

 Now, as $A_n$ satisfies \eqref{four circuits}, we have 
\begin{align}\label{fourthmoment-noniid}
& \frac{1}{n^{4}} \mathbb{E}\big[\Tr(A_n^k)  -  \mathbb{E}(\Tr(A_n^k))\big]^4 = \mathcal{O}(n^{-2})\ \ \text{ and therefore,}\nonumber\\
&  \displaystyle \sum_{n=1}^{\infty}\frac{1}{n^{4}} \mathbb{E}\big[\Tr(A_n^k)  -  \mathbb{E}(\Tr(A_n^k))\big]^4 < \infty \ \ \ \text{ for every } k\geq 1.
\end{align}
Then using Lemma \ref{lem:genmoment}, we can conclude almost sure convergence of the ESD of $A_n$.\\

\begin{remark} 
It is not too hard to show the following results. We omit the proofs: \vskip3pt

\noindent (i) Suppose the LSD of $RC_n$ in Theorem \ref{thm:mainrev} is the symmetrised Rayleigh distribution.
Then $g_{2k}=0$ for all $k \geq 2$ and $C_2=\int_0^1 g_2(t) \ dt =1$. 
\vskip3pt
 
\noindent (ii) Suppose the LSD of $SC_n$ in Theorem \ref{thm:mainsym}  is the standard normal distribution.
Then  $C_2=\int_0^1 g_2(t) \ dt =1$ and $g_{2k}=0$ for $k\geq 2$. 
\end{remark}

\subsubsection{General triangular i.i.d. entries}\label{triangulariid} 
Let $A_n$ be one of the four $n \times n$ patterned matrices mentioned in Section \ref{introduction}. Suppose for each fixed $n$ the input sequence $\{x_{i,n}: i \geq 0\}$   
are independent and identically distributed  with all moments finite.
 Assume that for all $k \geq 1$,
 \begin{equation}\label{ck}
  n \mathbb{E}[x_{0,n}^k]\rightarrow C_k \ \ \text{ as }\ n \rightarrow \infty.
 \end{equation}
Also assume that the moments of the random variable whose cumulants are $\{0,C_2,0,C_4,\ldots\}$ satisfy Carleman's condition.

Now observe that Assumption A (i), (ii) and (iii)  are satisfied with $t_n=\infty$ and $g_{2k}\equiv C_{2k}$ for $k \geq 1$.
Hence from Theorems \ref{thm:mainrev}{\textemdash}\ref{thm:maintoe}
the EESD of $RC_n$, $SC_n$, $T_n$ and $H_n$ converge weakly to  symmetric probability measures, whose moments are as in the respective theorems.
\begin{remark} Let $\{x_{i,n}: \ i\geq 1\}$ be i.i.d. with all moments finite, for every fixed $n$. Assume that $\sum_{i=1}^{n}x_{i,n}$ converges in distribution to a limit distribution $F$ whose cumulants are $\{C_k\}_{k \geq 1}$.
This assumption is equivalent to condition \eqref{ck}. 

In particular if $F$ is an infinitely divisible distribution with all moments finite, we can definitely find such i.i.d. random variables $\{x_{i,n}: i \geq 1\}$. See page 766 (characterization 1) in \citep{bose2002contemporary}.
\end{remark}
\subsubsection{Sparse triangular i.i.d. entries} 
Suppose the input sequence  $\{x_{i,n}: \ i \geq 0\}$ of the patterned matrices are $\mbox{Ber}(p_n)$ where $np_n \rightarrow \lambda>0$. 
\citep{banerjee2017patterned} showed that the EESD of patterned matrices with such entries converge to symmetric probability measures. Here we give an alternative proof of this fact and also identify which partitions contribute to the limiting moments.

Observe that in this situation, \eqref{ck} is satisfied with $C_k=\lambda$ for all $k\geq 1$.
Therefore from the discussion  in Section \ref{triangulariid} we obtain that the EESD of $A_n$ converges weakly. 

 Let us now look at the particular cases.
\vskip3pt
\noindent (i) If $A_n$ is the sparse reverse circulant matrix, then 
$$\beta_{2k}(\mu)=\displaystyle \sum_{\sigma \in S(2k)} \lambda^{|\sigma|}.$$
Therefore the  \textit{half cumulants}  (as described in Section 3 of \citep{bose2011half}) of $\mu$ are $\{0, \lambda,0,\lambda,\ldots\}$. For more details on half cumulants, see  \cite{bose2011half}. 

\vskip3pt

\noindent (ii) If $A_n$ is the sparse symmetric circulant, then 
$$\beta_{2k}(\mu)=\displaystyle \sum_{\sigma \in E(2k)} a_{\sigma}\lambda^{|\sigma|}.$$
As described in Section \ref{description of limit}, all odd cumulants of $\mu$ vanish and its even cumulants are $\{a_{2n}C_{2n}\}_{n \geq 1}$, where $a_{2k}$ is the $2k$-th moment of a random variable $Z$ defined  in Lemma \ref{a_nmoment}. Now as $C_{2n}=\lambda$ for all $n$, we get that $\mu$ is the symmetrised \textit{compound Poisson distribution} with rate $\lambda$ and jump distribution $Z$, where $Z$ is the distribution identified in Section \ref{description of limit}.

  \vskip3pt
 
\noindent (iii) Now suppose $A_n$ is the sparse Toeplitz matrix. Then its EESD converges weakly to $\mu$ and $\beta_{2k}(\mu)=\displaystyle \sum_{\sigma \in E(2k)} \alpha({\sigma})\lambda^{|\sigma|}$ where $\alpha(\sigma)$ is obtained from the different linear combinations of $s_j$'s corresponding to $\sigma$ (see \eqref{limit-toe}).
  
  \vskip3pt
	
\noindent (iv) Finally, suppose $A_n$ is the sparse Hankel matrix. Then its EESD converges weakly to $\mu$ and $\beta_{2k}(\mu)=\displaystyle \sum_{\sigma \in S(2k)} \alpha({\sigma})\lambda^{|\sigma|}$ where $\alpha(\sigma)$ is obtained as the value of an integral corresponding to $\sigma$ (see \eqref{han4} and \eqref{limit-han}).

 \vskip3pt
 
\begin{remark} 
(\textit{Sums of sparse matrices}) From this discussion on sparse matrices we can also conclude that the EESD of finite sums of sparse matrices with independent entries converge weakly to a symmetric probability measure. This can be observed in the following manner.

Suppose $A_{n,1},A_{n,2},\ldots,A_{n,m}$ are $m$ independent $n\times n$ matrices whose entries are independent $\mbox{Ber}(p_n)$ where $np_n \rightarrow \lambda>0$. Then the entries  $\{x_{i,n}:  i\geq 0\}$ of $A_n:=\sum_{k=1}^m A_{n,k}$  are independent $Bin(m,p_n)$.
Then for $i \geq 0$, 
$$\mathbb{E}[x_{i,n}^k]= m p_n(1-p_n)^{m-1}+ \displaystyle \sum_{j=2}^m j^k {k \choose j}p_n^j(1-p_n)^{m-j}= mp_n+ o(p_n).$$ 
Clearly \eqref{ck} is satisfied with $C_k=m \lambda$ for all $k \geq 1$. Hence, from the above discussion, the EESD of $A_n$ converges to a symmetric probability measure in all of the four cases. In each case the $2k$-th moments of the limiting distribution are obtained accordingly replacing $\lambda$ by $m\lambda$ here. 
\end{remark}
 
Below we give a few simulations that show intuitively how when the entries are iid $N(0,1)/\sqrt{n}$ (top row of Figure \ref{fig:revfig2} and \ref{fig:symfig2}), we have almost sure convergence, however the same is not true when the entries are i.i.d. $Ber(3/n)$ (bottom row of Figure \ref{fig:revfig2} and \ref{fig:symfig2}). 
 \begin{figure}[htp]
\begin{subfigure}{.5\textwidth}
  \centering
  \includegraphics[width=.7\linewidth]{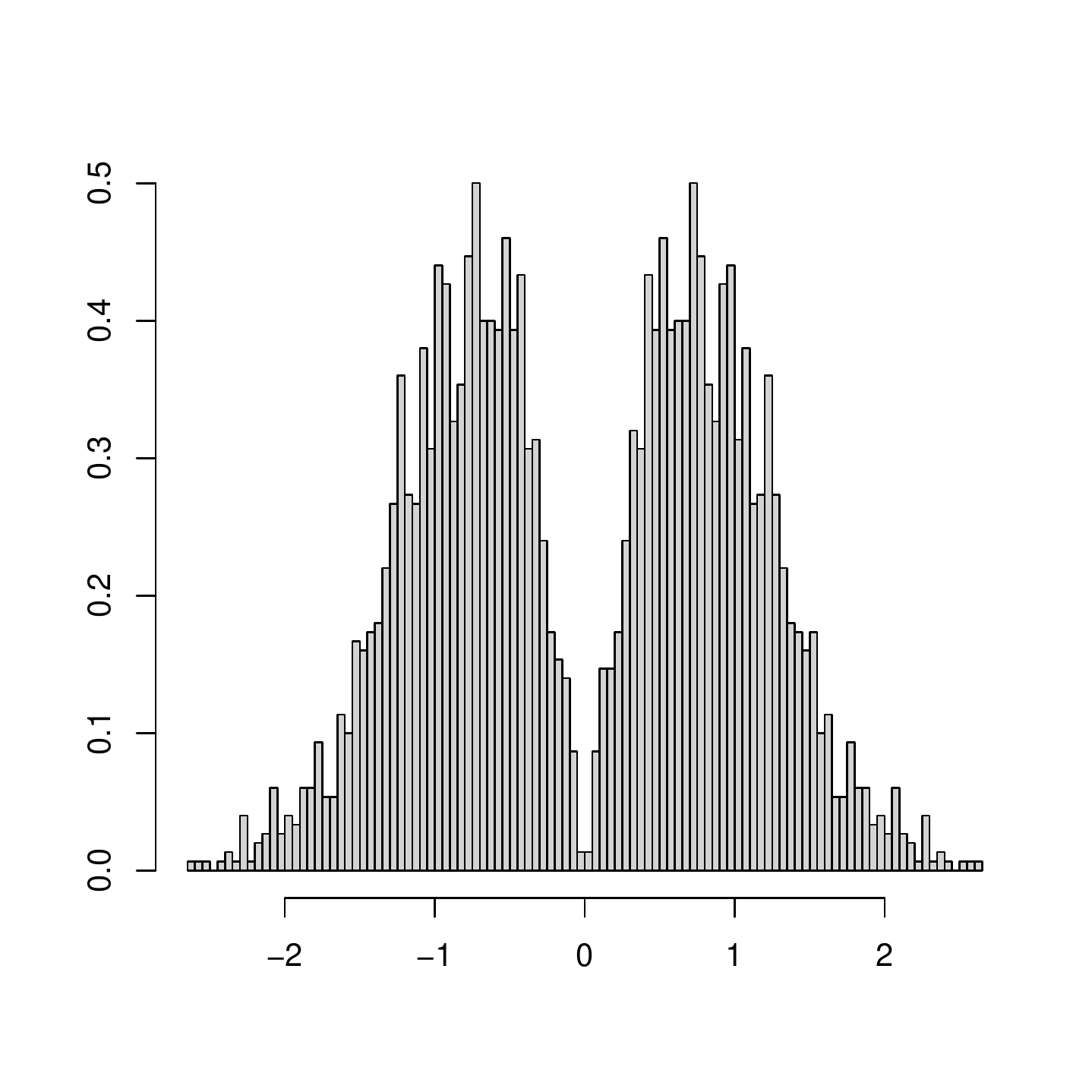}  
\end{subfigure}
\begin{subfigure}{.5\textwidth}
  \centering
  \includegraphics[width=.7\linewidth]{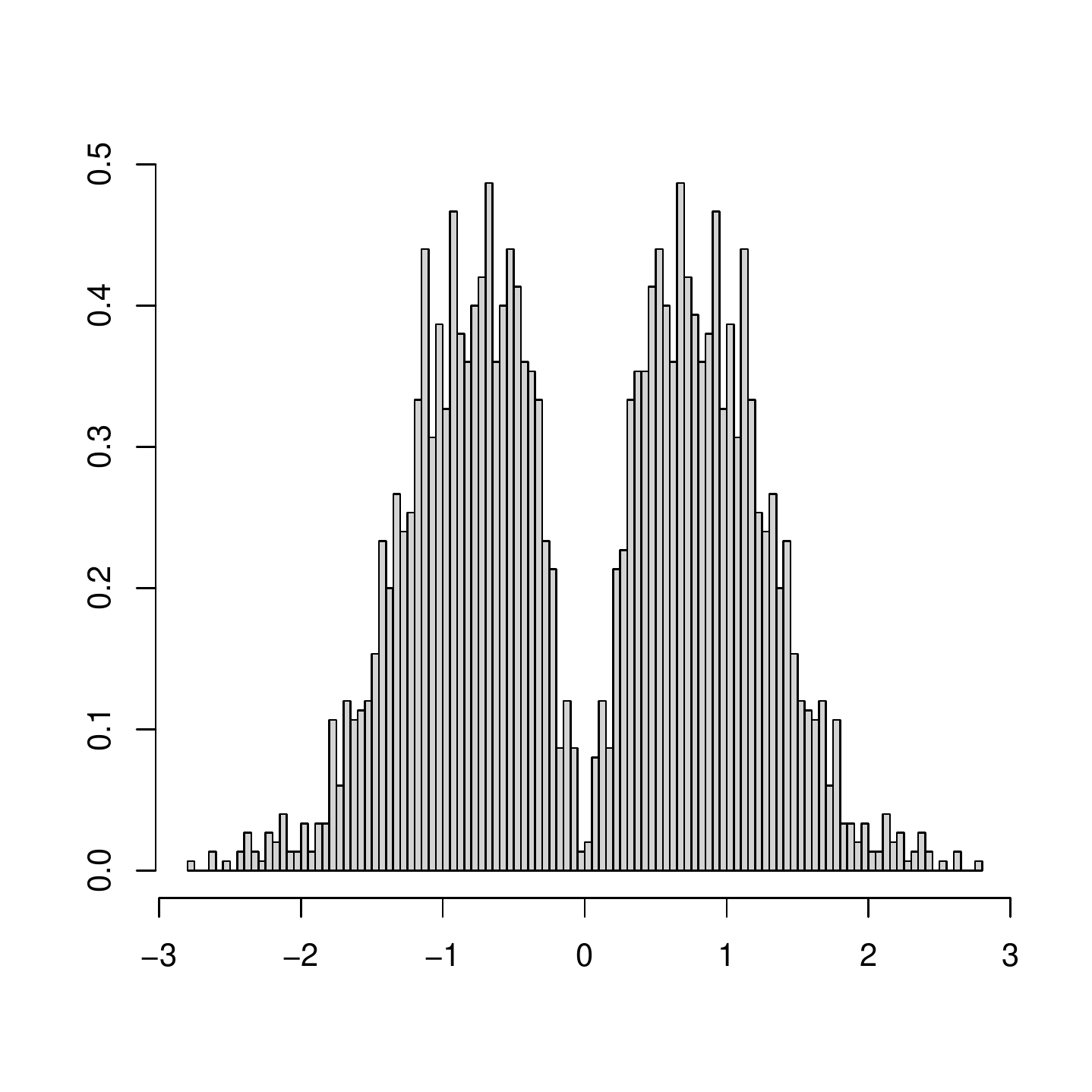}  
\end{subfigure}
\\
\begin{subfigure}{.5\textwidth}
  \centering
  \includegraphics[width=.7\linewidth]{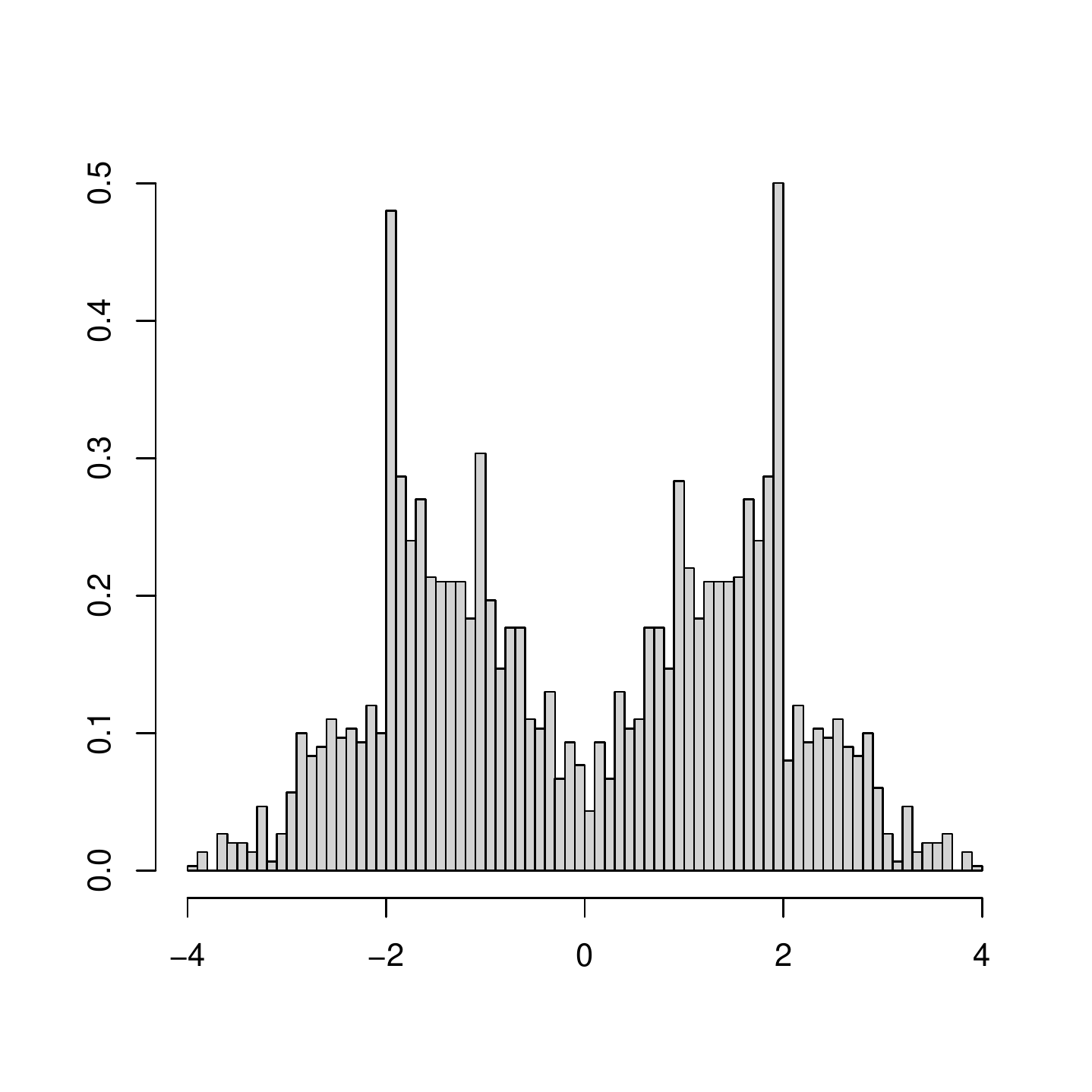}  
\end{subfigure}
\begin{subfigure}{.5\textwidth}
 \centering
  \includegraphics[width=.7\linewidth]{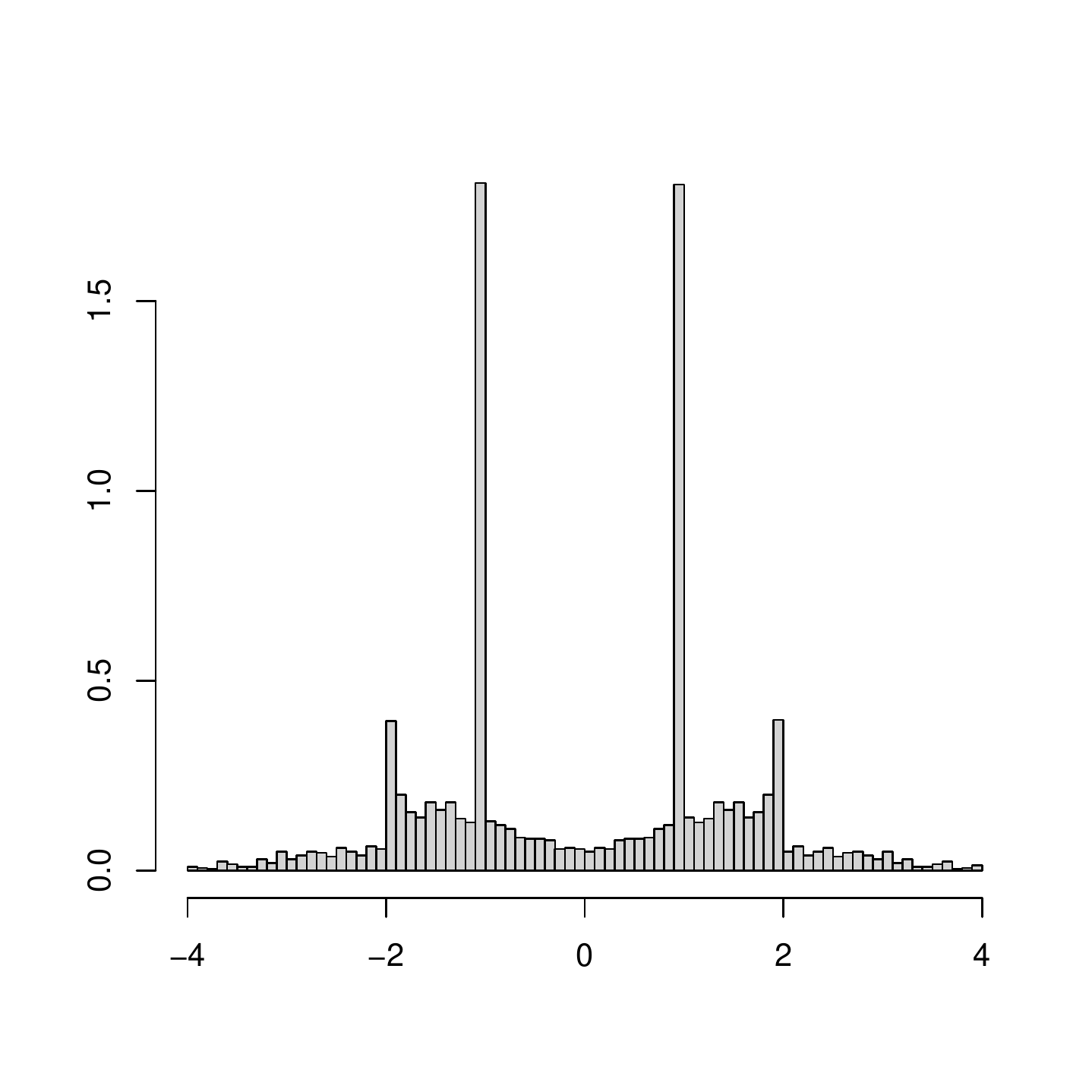}  
\end{subfigure}
\caption{Histogram of the eigenvalues of $RC_n$ for $n=1000$ where the entries are i.i.d. $N(0,1)/\sqrt{n}$ (top row) and  i.i.d. Ber$(3/n)$ (bottom row) for every $n$.}
\label{fig:revfig2}
\end{figure}

 \begin{figure}[htp]
\begin{subfigure}{.5\textwidth}
  \centering
  \includegraphics[width=.7\linewidth]{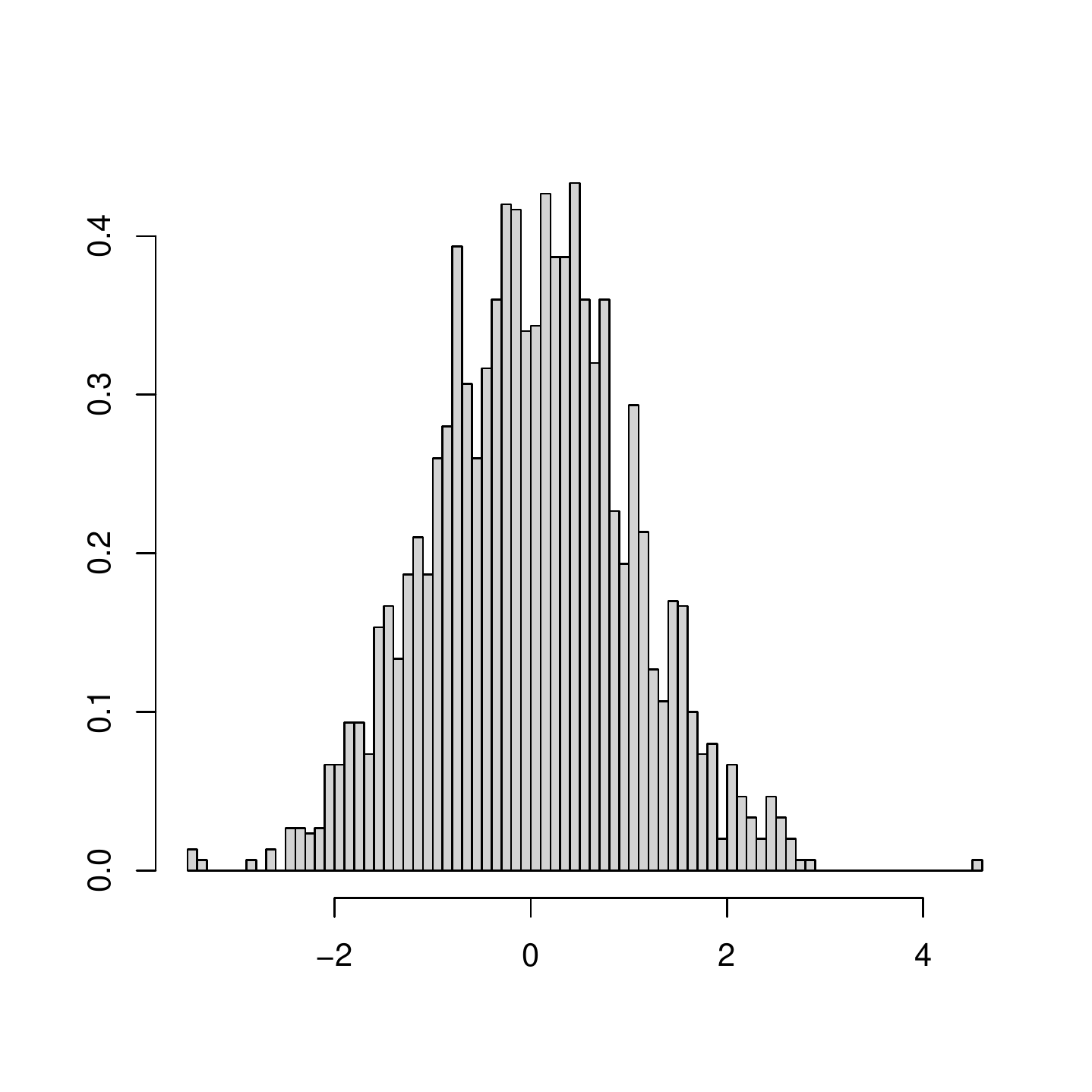}  
\end{subfigure}
\begin{subfigure}{.5\textwidth}
  \centering
  \includegraphics[width=.7\linewidth]{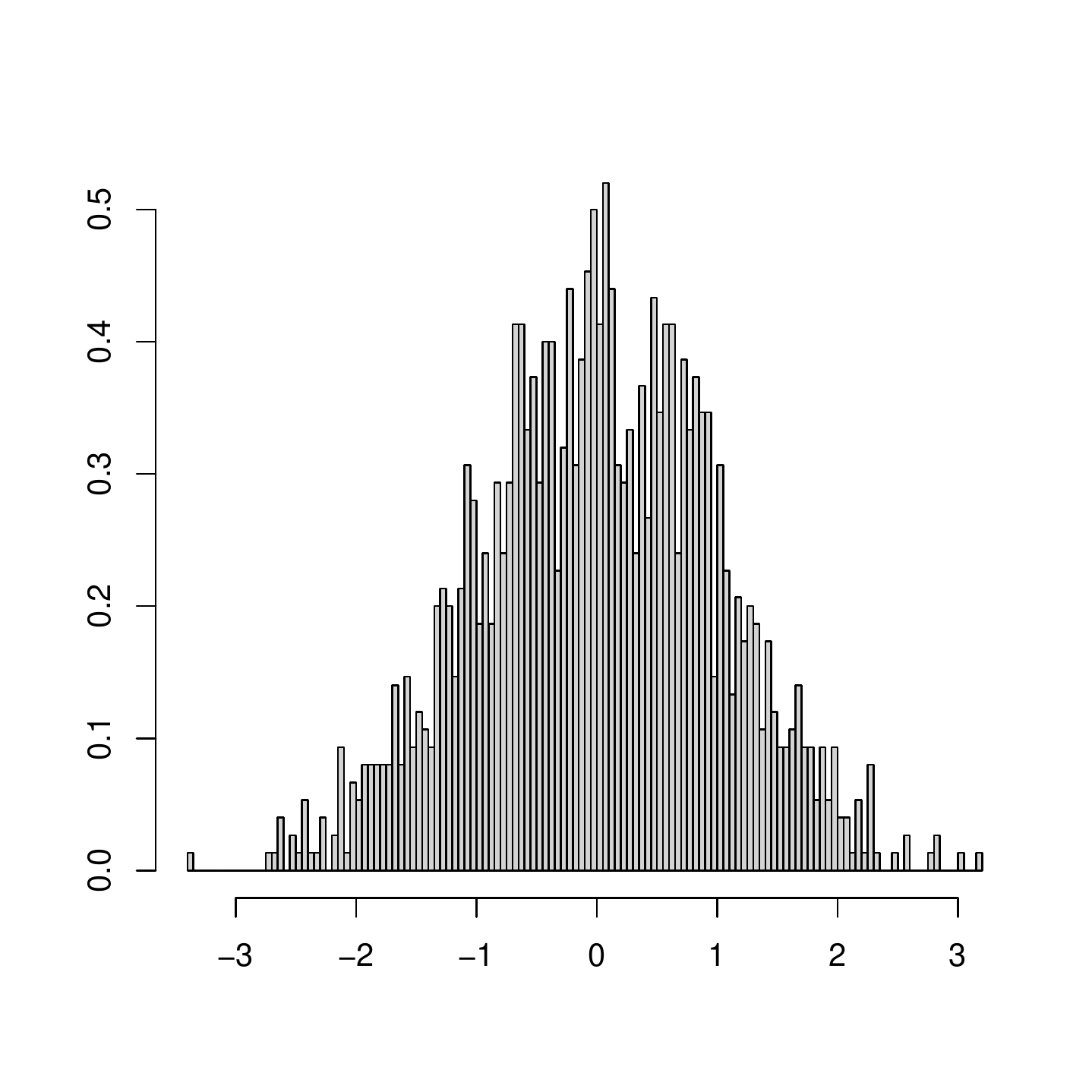}  
\end{subfigure}
\\
\begin{subfigure}{.5\textwidth}
  \centering
  \includegraphics[width=.7\linewidth]{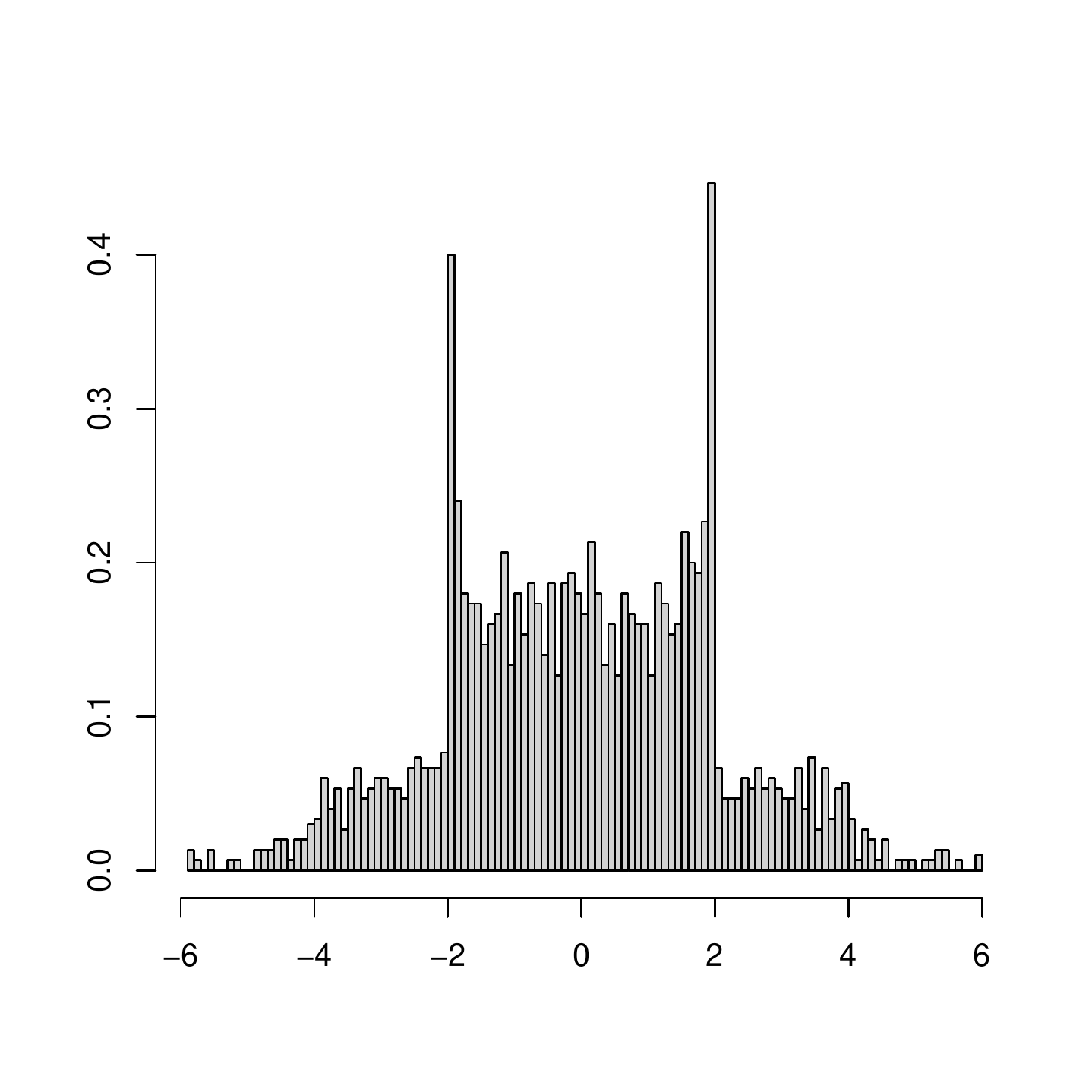}  
\end{subfigure}
\begin{subfigure}{.5\textwidth}
 \centering
  \includegraphics[width=.7\linewidth]{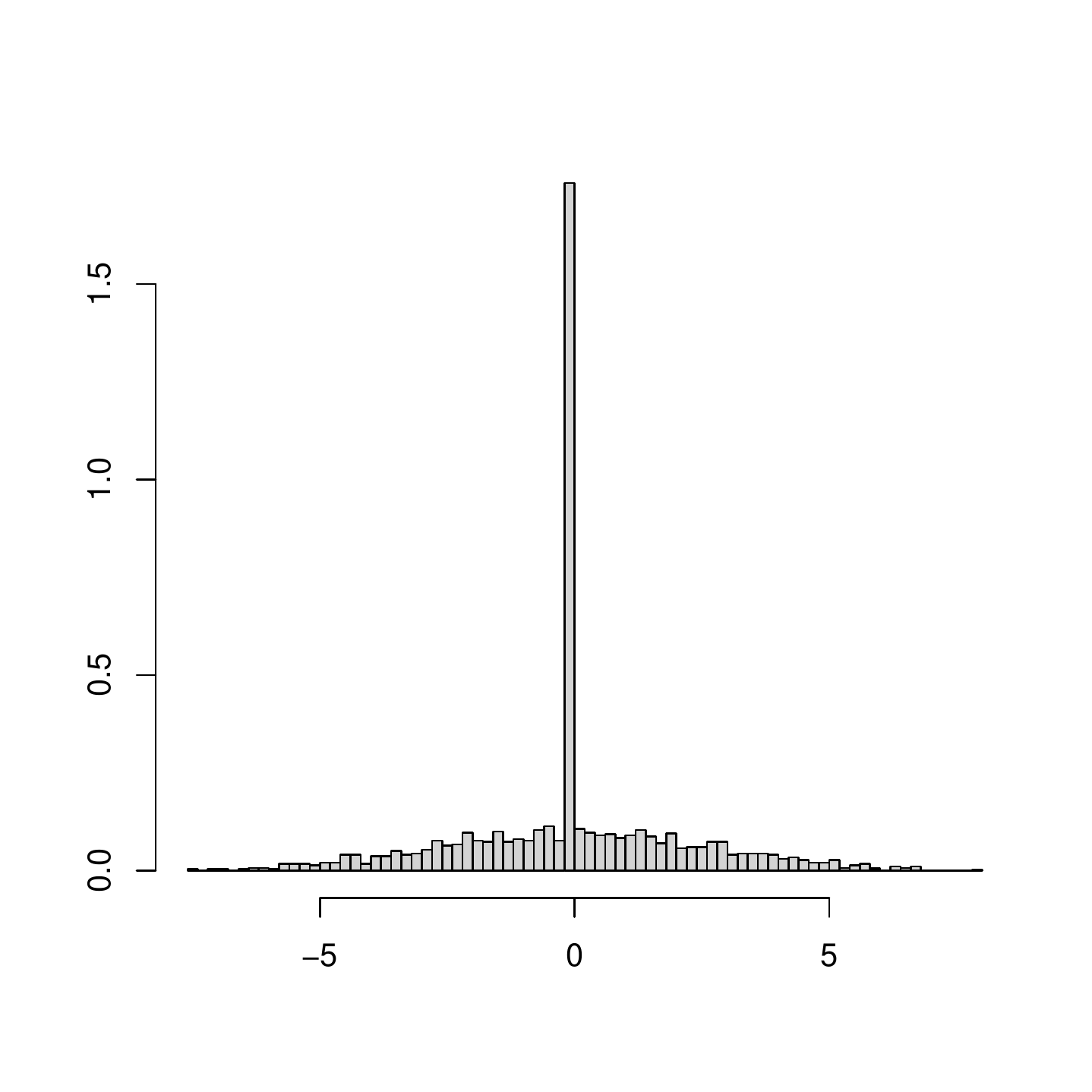}  
\end{subfigure}
\caption{Histogram of the eigenvalues of $SC_n$ for $n=1000$ where the entries are i.i.d. $N(0,1)/\sqrt{n}$ (top row) and  i.i.d. Ber$(3/n)$ (bottom row) for every $n$.}
\label{fig:symfig2}
\end{figure}

\subsection{Matrix with a variance profile}\label{variance profile}
We shall discuss LSD of patterned matrices with two kinds of variance profile: \textit{Discrete variance profile} and \textit{Continuous variance profile}. 

\subsubsection{Discrete variance profile}
Suppose the input sequence of the matrix $A_n$ is  
$\{\sigma_ix_{i,n};i\geq 0\}$, where $\{\sigma_i\}$ is a sequence of real numbers and  $\{x_{i,n};i\geq 0\}$ are  i.i.d. random variables with mean zero and all moments finite. Random matrices with such type of inputs are known as matrices with a \textit{discrete variance profile} or \textit{separable variance profile}.

Assume that $\{x_{i,n};i\geq 0\}$ satisfy \eqref{ck} and the moments of the random variable whose cumulants are $\{0,C_2,0,C_4,0,C_6,\ldots\}$ satisfy  Carleman's condition.

 Further let $\{\sigma_i:i\geq 0\}$  satisfy the following:
	\begin{enumerate}
	\item[(i)] $\displaystyle \sup_i |\sigma_i|\leq c < \infty$.
	\item[(ii)] For any $k\geq 1$, 
	\begin{equation}\label{sigma-limit}
	\frac{1}{n}\displaystyle \sum_{i=0}^{n-1}\sigma_i^{2k} \rightarrow \alpha_{2k}, \mbox{ say}.
	\end{equation} 
\end{enumerate}
Under these conditions, the EESD of $RC_n$ and of $SC_n$ converge weakly to some symmetric probability distribution $\mu_1$ and $\mu_2$, respectively, whose moments are determined by $\{\alpha_{2n}\}_{n\geq 1}$ and $\{C_{2n}\}_{n \geq 1}$. This is justified by the following arguments.

 
\vskip3pt 
 
\noindent \textbf{Reverse Circulant}: Let $C_{2k,n}=n \mathbb{E}[x_{0,n}^{2k}]$.
Observe that  the entries of  $A_n=RC_n$ satisfies \eqref{gkeven} with  $t_n=\infty$ and $g_{2k,n}(\frac{i}{n})=\sigma_i^{2k}C_{2k,n}$ for $0\leq i \leq n-1$. Also because of \eqref{ck}, we have \eqref{gkodd} with $t_n=\infty$.

Now from Step 2 of the proof of Theorem \ref{thm:mainrev}, observe that the contribution of the non-symmetric words is 0. 
Suppose $\boldsymbol {\omega}$ is a symmetric word of length $2k$ with $b$ distinct letters which appear $k_1,k_2, \ldots,k_b$ times. Then using  \eqref{finitesum-rev}, the contribution of this word to the limiting moment is the following: 
\begin{align}
\displaystyle \lim_{n \rightarrow \infty}\frac{1}{n^{b+1}} \displaystyle \sum_{S} \prod_{j=1}^b \sigma_{L(m_j,l_j)}^{k_j}\prod_{j=1}^b C_{k_j,n},
\end{align}
where $L$ is the link function of the reverse circulant matrix, $S$ is the set of generating vertices and $(m_j,l_j)$, $ {1 \leq j \leq b}$ are as in the proof of Theorem \ref{thm:mainrev}.  

Observe that $C_{k_j,n}$ does not depend on the values of the generating vertices  as $\{x_{i,n}\}$ are i.i.d. Also from \eqref{ck} we have $C_{k_j,n}{\rightarrow} C_{k_j}$ as $n\to\infty$.
 
Therefore to obtain the contribution of $\boldsymbol {\omega}$ to the limiting moment it is enough to compute $\displaystyle \lim_{n \rightarrow \infty}\frac{1}{n^{b+1}} \displaystyle \sum_{S} \prod_{j=1}^b \sigma_{L(m_j,l_j)}^{k_j}$. 

Now for any $j \in \{1,2,\ldots,b\}$, $m_j=\pi(i_j-1)$ is either a generating vertex or can be written as a linear combination of the \textit{previous} $l_q$'s. Therefore in any case, $m_j=L_j(\{l_q: 0 \leq q \leq j-1\})$, where $L_j$ denotes the linear representation. As a result, $m_j$ does not depend on the value of $l_j$ and hence does not change with $l_j$. With this observation we obtain that for any $s\geq 1$,
\begin{align*}
\frac{1}{n}\displaystyle \sum_{l_j=1}^n\sigma_{m_j,l_j}^{2s} & = \frac{1}{n} \displaystyle \underset{l_j:\ m_j+l_j-2<n}{\sum} \sigma_{m_j+l_j-2}^{2s}+ \frac{1}{n} \displaystyle \underset{l_j:\ m_j+l_j-2\geq n}{\sum} \sigma_{(m_j+l_j-2)-n}^{2s}\\
&=\frac{1}{n} \displaystyle \sum_{t=m_j-2}^{n-1} \sigma_{t}^{2s} +\frac{1}{n} \displaystyle \sum_{t=0}^{m_j-3} \sigma_{t}^{2s} \\
& =\frac{1}{n} \displaystyle \sum_{t=0}^{n-1} \sigma_{t}^{2s}
 \rightarrow \alpha_{2s} \ \mbox{ as }\ n\to\infty.
\end{align*}
As a consequence, we get
$$\lim_{n \rightarrow \infty}\frac{1}{n^{b+1}} \displaystyle \sum_{S} \prod_{j=1}^b \sigma_{L(m_j,l_j)}^{k_j}=\prod_{j=1}^b\alpha_{k_j}.$$
Hence the contribution of any symmetric word of length $2k$, with $b$ distinct letters that appears $k_1,k_2,\ldots,k_b$ times, to the limiting moment is $\prod_{j=1}^b \alpha_{k_j}C_{k_j}$. 
Hence we have  
\begin{equation}\label{rev-moment}
\displaystyle \lim_{n \rightarrow \infty} \mathbb{E}[\Tr(RC_n)^k]= \begin{cases}
\displaystyle \sum_{\pi \in S(k)} \alpha_{\pi} C_{\pi} & \text{ if } k \text{ is even},\\
\ 0 & \text{ if } k \text{ is odd}.
\end{cases}
\end{equation}
Thus we have verified the first moment condition for the reverse circulant matrix with a variance profile.

As $\displaystyle \sup_i |\sigma_i|\leq c$, 
$$ \displaystyle \sum_{\pi \in E(2k)}c^{|\pi|}C_\pi\leq \max\{c,1\}^k \displaystyle \sum_{\pi \in E(2k)}C_\pi.$$
Since the right side of the above inequality satisfies the Carleman's condition, there exists a probability measure $\mu_1$ such that its moments $\{\beta_k(\mu_1)\}_{k \geq 1}$ are as in \eqref{rev-moment}. Hence the EESD of $RC_n$ converges weakly to $\mu_1$.
\vskip3pt

\noindent \textbf{Symmetric Circulant}: Let $C_{2k,n}=n \mathbb{E}[x_{0,n}^{2k}]$.
Observe that  the entries of $A_n=SC_n$ satisfies \eqref{gkeven} and \eqref{gkodd} with  $t_n=\infty$ and $g_{2k,n}(\frac{i}{n})=\sigma_i^{2k}C_{2k,n}$ for $0\leq i \leq n-1$. 

From the proof of Theorem \ref{thm:mainsym} observe that the words, that are not even, contribute $0$. Now, for any even word $\boldsymbol {\omega}$ of length $2k$ with $b$ distinct letters which appear $k_1,k_2, \ldots,k_b$ times, from \eqref{limit-symfi}, observe that there are $ \prod_{i=1}^b {{k_i-1} \choose \frac{k_i}{2}}$ set of equations that contribute identically. Thus, from \eqref{finitesum-sym} the contribution for each such set of equations is now as follows: 
\begin{align}
\displaystyle \lim_{n \rightarrow \infty}\frac{1}{n^{b+1}} \displaystyle \sum_{S} \prod_{j=1}^b \sigma_{L(m_j,l_j)}^{k_j}C_{k_j,n}
\end{align}
where $L$ is the symmetric circulant link function, $S$ is the set of generating vertices  and $(m_j,l_j)$, ${1 \leq j \leq b}$ are as in  the proof of Theorem \ref{thm:mainsym}.

As in the reverse circulant case, it is enough to compute  $\displaystyle \lim_{n \rightarrow \infty}\frac{1}{n^{b+1}} \displaystyle \sum_{S} \prod_{j=1}^b \sigma_{L(m_j,l_j)}^{k_j}$ in order to obtain the contribution of the word $\boldsymbol{\omega}$ to the limiting moment. 
Note that $m_j=L_j(\{i_q: 0 \leq q \leq j-1\})$. As a result, it does not depend on $l_j$. 
Hence for any $s\geq 1$,
\begin{align}\label{sigmasum-sym}
\frac{1}{n}\displaystyle \sum_{l_j}\sigma_{L(m_j,l_j)}^{2s} = \frac{1}{n} \displaystyle \sum_{t=0}^{m_j-1} \sigma_{t}^{2s} +\frac{1}{n} \displaystyle \sum_{t=m_j}^{\frac{n}{2}-1} \sigma_{t}^{2s} +\frac{2}{n} \displaystyle \sum_{t=0}^{\frac{n}{2}-1} \sigma_{t}^{2s} =2\frac{2}{n} \displaystyle \sum_{t=0}^{n/2-1 }\sigma_{t}^{2s} \rightarrow 2\alpha_{2s}.
\end{align}
As a consequence, we have 
$$\displaystyle \lim_{n \rightarrow \infty}\frac{1}{n^{b+1}} \displaystyle \sum_{S} \prod_{j=1}^b \sigma_{L(m_j,l_j)}^{k_j}= \displaystyle \prod_{j=1}^b 2^{k_j}\alpha_{k_j}.$$ 
 Therefore, the contribution of any even word of length $2k$ with $b$ distinct letters to the limiting moment is $\displaystyle \prod_{j=1}^b 2^{k_j}a_{k_j}\alpha_{k_j}C_{k_j}$, where $a_{2n}= \frac{\1}{2}{{2n} \choose n}$. 
 
Let $\delta_{2k}=2\alpha_{2k}$. Then we have that 
\begin{equation}\label{sym-moment}
\displaystyle \lim_{n \rightarrow \infty} \mathbb{E}[\Tr(SC_n)^k]= \begin{cases}
\displaystyle \sum_{\pi \in E(k)} a_{\pi} \delta_{\pi} C_{\pi} & \text{ if } k \text{ is even},\\
\ 0 & \text{ if } k \text{ is odd}.
\end{cases}
\end{equation}
As before, there exists a probability measure $\mu_2$ such that its moments $\{\beta_k(\mu_2)\}_{k \geq 1}$ are as in \eqref{sym-moment}.
\vskip3pt

Note that in the i.i.d. situation where  each $x_{i,n}$ has the same distribution $F$ for all $i$ and $n$, $C_{2k}=0$ for all $k\geq 2$. Hence the EESD of $RC_n$ converges to a symmetric probability distribution $Y$ such that $Y= \sqrt{\alpha_{2}}\mathcal{R}$, where $\mathcal{R}$ is a random variable with  the \textit{symmetrised Rayleigh distribution}. As $\sigma_i$ is bounded uniformly, \eqref{four circuits} and hence \eqref{fourthmoment-noniid} hold true. Thus we can conclude that in this case the ESD of $RC_n$ converges weakly almost surely to the symmetrised Rayleigh distribution.
 
Similarly, the ESD of the symmetric circulant matrix converges weakly almost surely to $\sqrt{2\alpha_2}N$ where $N$ is a standard Gaussian  variable.

\subsubsection{Continuous vriance profile}\label{cont-var}
Suppose the input sequence is  
$\{\sigma(i/n)x_{i,n};i\geq 0\}$, where $\sigma: [0,1]\rightarrow \mathbb{R}$ is a continuous function and   $\{x_{i,n};i\geq 0\}$ are  i.i.d. random variables with mean zero and all moments finite. 
Such matrices are said to have a \textit{continuous variance profile} or \textit{sampled variance profile}.


Assume that $\{x_{i,n};i\geq 0\}$ satisfy \eqref{ck} and the moments of the random variable whose cumulants are $\{0,C_2,0,C_4,0,C_6,\ldots\}$ satisfy  Carleman's condition.

Then the EESD of $A_n$ where $A_n$ has one of the four patterns, converge weakly to symmetric probability distributions whose moments are determined by $\sigma$ and $\{C_{2k}, k \geq 1\}$. This follows from Theorems \ref{thm:mainrev}\textemdash{}\ref{thm:maintoe} as argued below:

Let $C_{2k,n}= n \mathbb{E}[x_{0,n}^{2k}]$. First observe that the entries  of $A_n$ satisfy  Assumption A (i) and (ii)  with $t_n=\infty$, $g_{2k,n}= \sigma^{2k} C_{2k,n}$ and $g_{2k} = \sigma^{2k} C_{2k}$. 
Since  $\sigma$ is a continuous function on a compact set $[0,1]$, it is  bounded. Therefore, Assumption A (iii) is also true. Hence from Theorems \ref{thm:mainrev}\textemdash{} \ref{thm:maintoe}, we can conclude that the EESD of $A_n$ converges weakly. 

Next we give a brief computation of the limiting moments for reverse circulant and symmetric circulant matrices.
\vskip3pt

\noindent \textbf{Reverse Circulant}:  From Theorem \ref{thm:mainrev}, all  odd moments of the LSD are $0$ and the $2k$th moment is given by $\beta_{2k}= \displaystyle \sum_{\pi \in S(2k)} \alpha_{\pi}C_{\pi}$, where $\alpha_{2m}= \int_0^1 \sigma^{2m}(t) \ dt$.

\vskip3pt

\noindent \textbf{Symmetric Circulant}:  
From Theorem \ref{thm:mainsym}, 
all odd moments of the LSD are $0$ and 
the $2k$th moment  
is given by $\beta_{2k}= \displaystyle \sum_{\pi \in S(2k)} a_{\pi}\alpha_{\pi}C_{\pi}$, where $\alpha_{2m}= 2\int_0^1 \sigma^{2m}(t) \ dt$.

Similarly, the moments of the Toeplitz and Hankel matrices can be calculated using \eqref{toe1} and \eqref{han} in Theorem \ref{thm:maintoe} where the function in the integrand is replaced by $\prod_{j=1}^b C_{k_j} \sigma^{k_j}(|x_{m_j}-x_{l_j}|)$ and $\prod_{j=1}^b C_{k_j} \sigma^{k_j}(x_{m_j}+x_{l_j})$ respectively.


\subsection{Band Matrices}\label{band matrices}
Band matrices are usually defined to be matrices whose elements are non-zero only around the diagonal in the form of a band. As the dimension of the matrices increase, so does the number of non-zero elements around the diagonal. Here we shall see what happens to the LSD of the above four matrices with suitable banding.

Let $A_n$ be any one of the four $n \times n$ patterned matrices introduced in Section \ref{introduction}. We use two types of banding. Let $m_n$ be a sequence of integers such that $m_n\rightarrow \infty$ and $\frac{m_n}{n}\rightarrow \alpha>0$. We do not deal with the case $\alpha=0$ in this article since it is not amenable to the type of arguments we have used so far.  
\vskip3pt
\noindent \textbf{Type I banding}: Type I band matrix $A_n^b$, of 
$A_n$ is the matrix 
with entries $y_{i,n}$ where
\begin{align}\label{typeI}
y_{i,n}=\begin{cases}
x_{i,n} & \text{ if } i \leq m_n,\\
0 & \text{ otherwise. }
\end{cases}
\end{align}
\textbf{Type II banding}: The Type II band versions $RC_n^B$ of $RC_n$ and $T_n^B$ of $T_n$ are defined with input sequence $\hat{y}_{i,n}$ where 
\begin{align}\label{typeII-rev}
\hat{y}_{i,n}=\begin{cases}
x_{i,n} &\text{ if } i\leq m_n \text{ or } i\geq n-m_n,\\
0 & \text{ otherwise}. 
\end{cases}
\end{align}
The Type II band versions $H_n^B$ of $H_n$ is defined with input sequence $\tilde{y}_{i,n}$ where 
\begin{align}\label{typeII-han}
\tilde{y}_{i,n}=\begin{cases}
x_{i,n} & \text{ if } n-m_n \leq i \leq n+m_n,\\
0 & \text{ otherwise}. 
\end{cases}
\end{align}
For example suppose $m_n \sim \big[\frac{n}{3}\big]$ where $[\cdot]$ is the greatest integer function. At $n=5$, 
\begin{align*}
RC_5^b= \begin{bmatrix}
x_0 & x_1 & 0 & 0 & 0\\
x_1 & 0 & 0 & 0 & x_0\\
0 & 0 & 0 & x_0 & x_1\\
0 & 0 & x_0 & x_1 & 0\\
0 & x_0 & x_1 & 0 & 0
\end{bmatrix}, \ \ \ SC_5^b=\begin{bmatrix}
x_0 & x_1 & 0 & 0 & x_1\\
x_1 & x_0 & x_1 & 0 & 0\\
0 & x_1 & x_0 & x_1 & 0\\
0 & 0 & x_1 & x_0 & x_1\\
x_1 & 0 & 0 & x_1 & x_0
\end{bmatrix},\ \ \ T_5^b=\begin{bmatrix}
x_0 & x_1 & 0 & 0 & 0\\
x_1 & x_0 & x_1 & 0 & 0\\
0 & x_1 & x_0 & x_1 & 0\\
0 & 0 & x_1 & x_0 & x_1\\
0 & 0 & 0 & x_1 & x_0
\end{bmatrix},
\end{align*}

\begin{align*}
RC_5^B= \begin{bmatrix}
x_0 & x_1 & 0 & 0 & x_4\\
x_1 & 0 & 0 & x_4 & x_0\\
0 & 0 & x_4 & x_0 & x_1\\
0 & x_4 & x_0 & x_1 & 0\\
x_4 & x_0 & x_1 & 0 & 0
\end{bmatrix}, \ \ \ H_5^B=\begin{bmatrix}
0 & 0 & x_4 & x_5 & x_6\\
0 & x_4 & x_5 & x_6 & 0\\
x_4 & x_5 & x_6 & 0 & 0\\
x_5 & x_6 & 0 & 0 & 0\\
x_6 & 0 & 0 & 0 & 0
\end{bmatrix},\ \ \ T_5^B=\begin{bmatrix}
x_0 & x_1 & 0 & 0 & x_4\\
x_1 & x_0 & x_1 & 0 & 0\\
0 & x_1 & x_0 & x_1 & 0\\
0 & 0 & x_1 & x_0 & x_1\\
x_4 & 0 & 0 & x_1 & x_0
\end{bmatrix}.
\end{align*}
Patterned band matrices have been studied in some previous works, for example in \citep{basak2011limiting}, \citep{liu2011limit}, \citep{popescu2009general} and others. \citep{basak2011limiting} considered the LSD of the scaled Reverse Circulant, Symmetric Circulant, Toeplitz and Hankel band matrices where the scaling depends on the number of non-zero entries in the matrices. \citep{liu2011limit} studied the band Toeplitz and Hankel matrices with a particular scaling and proved that the ESD of these matrices converge weakly almost surely to  symmetric probability distributions which depend on the limiting ratio of the number of non-zero entries to the number of zero entries. \citep{popescu2009general} studied the convergence  of the ESD of a scaled tridiagonal matrix models.

We assume that the variables $\{x_{i,n};i\geq 0\}$ associated with the matrices $A_n^b$ or $A_n^B$ (as in \eqref{typeI}, \eqref{typeII-rev} and \eqref{typeII-han}) are i.i.d. random variables with all moments finite.  We also assume that $\{x_{i,n};i\geq 0\}$ satisfy \eqref{ck} and the moments of the random variable whose cumulants are $\{0,C_2,0,C_4,0,C_6,\ldots\}$ satisfy  Carleman's condition.
Then we have the following results.

\begin{result}\label{bandingI}
For all of the four matrices, the EESD of $A_n^b$ converge weakly to some symmetric probability measures $\mu_{\alpha}$ that depend on $\{C_{2k}\}_{k\geq 1}$ and $\displaystyle \alpha=  \lim_{n \rightarrow \infty}\frac{m_n}{n}>0$.  
\end{result}
 
\begin{proof}
For every $n$, 
 define the function $\sigma_n$ on the interval $[0,1]$ as 
$$\sigma_n(x)=\left\{\begin{array}{ll}
1 & \mbox{if }x\leq {m_n}/{n},\\
0 & \mbox{otherwise}.
\end{array}
\right.$$ 
Now observe that the entries $y_{i,n}$ of the matrix $A_n^b$ can be written as $\sigma_n\big({i}/{n}\big)x_{i,n}$. 


Observe that, for any $k \geq 1$, $\int \sigma_n^{k}(x)\  dx \rightarrow \int \sigma^{k}(x)\  dx$ as $n \rightarrow \infty$, where $\sigma$ is defined as 
\begin{align}\label{sigma}
\sigma(x)=\begin{cases}
1 & \text{ if } 0 \leq x \leq \alpha,\\
0 & \text{ otherwise}.
\end{cases}
\end{align}
Following the  proofs in Theorems \ref{thm:mainrev}\textemdash{}\ref{thm:maintoe} and the above convergence of $\int \sigma_n^{k}(x)\  dx$, 
it is easy see that  the first moment condition also hold for  $A_n^b$. 
Hence the EESD of  $A_n^b$ converges weakly to a symmetric probability distribution. 

The formulae for the limiting moments are as follows:
\vskip3pt

\noindent \textbf{Reverse Circulant}:  Clearly from Theorem \ref{thm:mainrev}, the odd moments of the LSD are all zero and  
the $2k$-th moment of the limiting distribution is given by 
$$\beta_{2k}= \displaystyle \sum_{\pi \in S(2k)} \alpha^{|\pi|}C_{\pi}, \mbox{ as } \int_0^1 \sigma^{2m}(t) \ dt= \int_0^{\alpha} \sigma^{2m}(t) \ dt= \alpha.$$

\noindent \textbf{Symmetric Circulant}: Clearly from Theorem \ref{thm:mainsym}, the odd moments of the LSD are all zero and 
the $2k$-th moment of the limiting distribution is given by 
$$\beta_{2k}= \displaystyle \sum_{\pi \in S(2k)} (2\alpha)^{|\pi|}a_{\pi}C_{\pi}, \mbox{ as } 2\int_0^1 \sigma^{2m}(t) \ dt=2\int_0^{\alpha} \sigma^{2m}(t) \ dt= 2 \alpha.$$
Similarly, the moments of the Toeplitz and Hankel matrices can be calculated from \eqref{toe1} and \eqref{han} in Theorem \ref{thm:maintoe} where the limits of the integral is from $0$ to $\alpha$ and the function in the integrand is replaced by $\prod_{j=1}^b C_{k_j} \sigma^{k_j}(|x_{m_j}-x_{l_j}|)$ and $\prod_{j=1}^b C_{k_j} \sigma^{k_j}(x_{m_j}+x_{l_j})$ respectively.
\end{proof}

\begin{remark}
\noindent (i) If the entries of $A_n^b$ are $\frac{y_{i,n}}{\sqrt{m_n}}$ where $\{y_{i,n};i \geq 0\}$ are as in \eqref{typeI} and $\{x_{i,n};i\geq 0\}_{n \geq 1}$ are i.i.d. random variables with mean 0 and variance 1, then $C_2= \frac{1}{\alpha}$ and $C_{2k}= 0$ for  $k\geq 2$. Hence from Result \ref{bandingI}, we obtain the convergence of the EESD. Again it can be verified using the same arguments as in proof of Lemma 1.4.3 in \citep{bose2018patterned} that \eqref{four circuits} and \eqref{fourthmoment-noniid} hold true. Thus we obtain Theorem 1 in \citep{basak2011limiting} for the case $\alpha\neq 0$.\\

\noindent(ii) Suppose $x_{i,n}= \frac{x_{i}}{\sqrt{(2-\alpha)\alpha n}}$ where $\alpha= \displaystyle \lim_{n \rightarrow \infty}\frac{m_n}{n}>0$, $\{x_{i};i\geq 0\}$ are independent variables with mean 0, variance 1 and $\displaystyle \sup_{i} \mathbb{E}[|x_i|^k]=M_k<\infty$ for $k\geq 3$. Now consider the matrix $T_n^b$ with entries $\{y_{i,n}; i \geq 0\}$, where  $\{y_{i,n};i \geq 0\}$ are as  in \eqref{typeI}. Then following the argument in the proof of Theorem \ref{thm:maintoe}, one can see that $g_2\equiv 1$, $g_{2k}\equiv 0$ for $k\geq 2$, and the EESD of $T_n^b$ converges weakly almost surely to a symmetric probability distribution. Similarly as (i), it can be verified using the same arguments as in proof of Lemma 1.4.3 in \citep{bose2018patterned} that \eqref{four circuits} and \eqref{fourthmoment-noniid} hold true. Hence we obtain Theorem 2.2 of \citep{liu2011limit} when $T_n$ is real symmetric.
\end{remark}
\begin{result}\label{bandingII}
The EESD of $A_n^B$ where $A_n$ is either reverse circulant or Toeplitz or Hankel matrix, converges weakly to some symmetric probability measures $\mu_{\alpha}$ that depend on $\{C_{2k}\}_{k\geq 1}$ and $\alpha= \displaystyle \lim_{n \rightarrow \infty}\frac{m_n}{n}>0$.  
\end{result}
 
\begin{proof}
For the Type II band versions $RC_n^B$ of $RC_n$ and $T_n^B$ of $T_n$, for every $n$ we define $\sigma_{n,1}$ on $[0,1)$ as  $$\sigma_{n,1}(x)= \begin{cases}
 1 &\text{ if } x \leq m_n/n \text{ or } x \geq 1-m_n/n,\\
 0 & \text{ otherwise}.
 \end{cases}$$ 
Clearly, $\int \sigma_{n,1}^{k}(x) \ dx \rightarrow \int \sigma_1^{k}(x) \ dx$ as $n \rightarrow \infty$ for each $k \geq 1$ where $\sigma_1= \boldsymbol{1}_{[0,\alpha] \cup [1-\alpha,1]}$.
 
 For the Type II band versions $H_n^B$ of $H_n$, for every $n$, we define a function $\sigma_{n,2}$ on $[0,2)$ as
 $$\sigma_{n,2}(x)= \begin{cases}
 1 &\text{ if } 1-m_n/n\leq x \leq 1 + m_n/n, \\
 0 & \text{ otherwise}.
 \end{cases}$$ 
Clearly, $\int \sigma_{n,2}^{k}(x) \ dx \rightarrow \int \sigma_2^{k}(x) \ dx$ as $n \rightarrow \infty$ for each $k \geq 1$ where  $\sigma_2= \boldsymbol{1}_{[1-\alpha,1+\alpha]}$.

Following the  arguments as in the proof of Result \ref{bandingI}, the convergence of the EESD of  $A_n^B$ follows. 
Now we compute the moments of the LSD of these matrices.
\vskip3pt

\noindent \textbf{Reverse Circulant}:  Clearly 
all odd moments of the LSD are $0$. 
From Theorem \ref{thm:mainrev}, the $2k$th moment of the LSD 
is given by $$\beta_{2k}= \displaystyle \sum_{\pi \in S(2k)} (2\alpha)^{|\pi|}C_{\pi}, 
\mbox{ as } \int_0^{\alpha}\sigma_1^{2m}(t) \ dt + \int_{1-\alpha}^1 \sigma_1^{2m}(t) \ dt=2\alpha.$$
Similarly, the moments of the LSD for Toeplitz and Hankel matrices can be calculated by using (\ref{toe1}) and (\ref{han}) in Theorem \ref{thm:maintoe} where the function in the integrand is replaced by $\prod_{j=1}^b C_{k_j} \sigma_1^{k_j}(|x_{m_j}-x_{l_j}|)$ and $\prod_{j=1}^b C_{k_j} \sigma_2^{k_j}(x_{m_j}+x_{l_j})$ respectively. As $\int_{0}^{1} \sigma_{1}^{2m}(t) \ dt$ and $\int_{0}^{1} \sigma_{2}^{2m}(t) \
dt$ are dependent on $\alpha$ for any $m\geq 1$, the limiting distribution also depends on $\alpha$.
\end{proof}

\begin{remark} 
 (i) If the entries of $A_n^b$ are $\frac{y_{i,n}}{\sqrt{m_n}}$ where $\{y_{i,n};i \geq 0\}$ are as  in \eqref{typeII-rev} or \ref{typeII-han} and $\{x_{i,n};i\geq 0\}_{n \geq 1}$ are i.i.d. random variables with mean 0 and variance 1, then $C_{2k}= 0$ for all $k\geq 2$ and $C_2= \frac{1}{\alpha}$. Hence from Result \ref{bandingII}, we get the convergence of the EESD. Further, it can be verified using the same arguments as in proof of Lemma 1.4.3 in \citep{bose2018patterned} that \eqref{four circuits} and \eqref{fourthmoment-noniid} hold true. Hence we obtain Theorem 1 of  \citep{basak2011limiting} when $\alpha\neq 0$.
\vskip3pt
\noindent(ii) Suppose $x_{i,n}= \frac{x_{i}}{\sqrt{(2-\alpha)\alpha n}}$ where $\alpha= \displaystyle \lim_{n \rightarrow \infty}\frac{m_n}{n}>0$, $\{x_{i};i\geq 0\}$ are independent random variables with mean 0, variance 1 and $ \sup_{i} \mathbb{E}[|x_i|^k]=M_k<\infty$ for $k\geq 3$. Now consider the matrix $H_n^B$ with entries $\{y_{i,n}; i \geq 0\}$, where  $\{y_{i,n};i \geq 0\}$ are as in \eqref{typeII-han} with $x_{i,n}$ as defined above. Then following the argument in Theorem \ref{thm:maintoe}, one can see that $g_2\equiv 1$, $g_{2k}\equiv 0$ for $k\geq 2$ and the EESD of $H_n^B$ converges weakly to a symmetric probability distribution. Further, it can be verified using the same arguments as in proof of Lemma 1.4.3 in \citep{bose2018patterned} that \eqref{four circuits} and \eqref{fourthmoment-noniid} hold true. Hence we obtain the result of Theorem 2.3 of \citep{liu2011limit}.
 \end{remark}

\subsection{Triangular matrices}
Triangular matrices have gained importance since their consideration in \citep{Dykema2002DToperatorsAD}, where the authors considered the triangular Wigner matrices with Gaussian entries. Later in \citep{basu2012spectral}, the authors concluded LSD results about triangular matrices with i.i.d input for some matrices with other patterns such as, Hankel, Toeplitz and symmetric circulant. 

Let $A_n$ be one of the $n \times n$ Hankel, Toeplitz or symmetric circulant matrices. Then triangular $A_n$, denoted by $A_n^u$ is the matrix whose entries $y_{i,n}$ are as follows:
\begin{align}\label{triangular}
y_{i,n}= \begin{cases}
x_{i,n} & \ \ \text{ if } (i+j)\leq n+1\\
0       & \ \ \text{ otherwise}.
\end{cases}
\end{align}
Note that the triangular reverse circulant matrix is the same as triangular Hankel matrix.

\noindent We assume that the variables $\{x_{i,n};i\geq 0\}$ associated with the matrices $A_n^u$ (as in \eqref{triangular}) are i.i.d. random variables with all moments finite, for every fixed $n$.  We also assume that $\{x_{i,n};i\geq 0\}$ satisfy \eqref{ck} and the moments of the random variable whose cumulants are $\{0,C_2,0,C_4,0,C_6,\ldots\}$ satisfy  Carleman's condition.
Then we have the following result.
\begin{result}\label{res:triangular}
For all of the three matrices mentioned above, the EESD of $A_n^u$ converge weakly to some symmetric probability measures $\mu_{A}$ that depend on $\{C_{2k}\}_{k\geq 1}$. 
\end{result}
 
\begin{proof}
The proof of this follows from the same argument given in  Result \ref{bandingI} with $\sigma$ being replaced by $\eta:[0,1]^2\rightarrow \mathbb{R}$ such that $\eta(x,y)=\boldsymbol{1}_{[x+y\leq 1]}$. We skip the details.
\end{proof}

\begin{remark}
\noindent (i) If the entries of $A_n^u$ are $\frac{y_{i,n}}{\sqrt{n}}$ where $\{y_{i,n};i \geq 0\}$ are as in \eqref{triangular} and $\{x_{i,n};i\geq 0\}_{n \geq 1}$ are i.i.d. random variables with mean 0 and variance 1, then $C_2= 1$ and $C_{2k}= 0$ for  $k\geq 2$. Hence from Result \ref{triangular}, we obtain the convergence of the EESD. Again it can be verified that \eqref{four circuits} and \eqref{fourthmoment-noniid} are true in this case. Thus we achieve that the ESD of $A_n^u$ converges weakly almost surely to a non-random symmetric probability measure.
\end{remark}


\providecommand{\bysame}{\leavevmode\hbox to3em{\hrulefill}\thinspace}
\providecommand{\MR}{\relax\ifhmode\unskip\space\fi MR }
\providecommand{\MRhref}[2]{%
  \href{http://www.ams.org/mathscinet-getitem?mr=#1}{#2}
}
\providecommand{\href}[2]{#2}

\bibliographystyle{plainnat}
\bibliography{mybibfilefinal}

\end{document}